\documentclass[12pt]{amsart}
\usepackage[margin=1.15in]{geometry}
\numberwithin{equation}{section}

\usepackage{amssymb}
\usepackage{enumerate, xspace}
\usepackage{marvosym} 
\usepackage[sans]{dsfont}        
\usepackage{amsthm}
\usepackage{etoolbox}
\usepackage[bottom,first]{draftcopy}
\usepackage{changebar}
\usepackage[backgroundcolor=blue!20!white, linecolor=blue!20!white,textsize=footnotesize]{todonotes}
\usepackage[colorlinks]{hyperref}
\usepackage{graphicx}
\usepackage{mathtools}
\usepackage{cleveref}

\hfuzz=15pt
\usepackage{comment}
\allowdisplaybreaks[4]
\setlength{\parskip}{3pt}

\newtheorem{thm}{Theorem}[section]
\newtheorem{lem}[thm]{Lemma}
\newtheorem{cor}[thm]{Corollary}

\newtheorem{prop}[thm]{Proposition}

\newtheorem{defin}{Definition}

\newtheorem{rem}{Remark}[section]
\AfterEndEnvironment{rem}{\noindent\ignorespaces}

\newcommand\cE{{\mathcal E}}
\newcommand\cF{{\mathcal F}}

\newcommand\cL{{\mathcal L}}
\newcommand\cO{{\mathcal O}}
\newcommand\cM{{\mathcal M}}
\newcommand\cN{{\mathcal N}}


\newcommand\fl{{\mathfrak{l}}}

\newenvironment{customthm}[1]
  {\innercustomthm}
  {\endinnercustomthm}

\newcommand\Ban{{\mathbb{B}}}

\newcommand\ve{\varepsilon}
\newcommand\eps{\epsilon}
\newcommand\vf{\varphi}

\newcommand{\wh}[1]{\widehat{#1}}

\def\eps{{\varepsilon}}

\def\Prob{{\mathbb{P}}}

\def\EXP{{\mathbb{E}}}

\def\complex{\mathbb{C}}

\def\naturals{\mathbb{N}}

\def\reals{\mathbb{R}}

\def\integers{\mathbb{Z}}

\def\P{{\partial }}

\def\bc{\mathbf{c}}

\def\bh{\mathbf{h}}

\def\cA{\mathcal{A}}

\def\cJ{\mathcal{J}}

\def\cF{\mathcal{F}}

\def\cE{\mathcal{E}}

\def\cK{\mathcal{K}}

\def\cL{\mathcal{L}}

\def\cM{\mathcal{M}}

\def\cN{\mathcal{N}}

\def\cO{\mathcal{O}}

\def\fN{\mathfrak{N}}

\def\fn{\mathfrak{n}}


\makeatother

\def\beq{\begin{equation}}
\def\eeq{\end{equation}}



\newcommand{\BV}{{\operatorname{BV}}}

\begin{document}

\title[Edgeworth Expansions]{Edgeworth expansions for weakly dependent random variables}
\author{Kasun Fernando and Carlangelo Liverani}

\address{Kasun Fernando\\
Department of Mathematics\\
University of Maryland \\
4176 Campus Drive\\
College Park, MD 20742-4015, United States.}
\email{{\tt abkf@math.umd.edu}}

\address{Carlangelo Liverani\\
Dipartimento di Matematica\\
II Universit\`{a} di Roma (Tor Vergata)\\
Via della Ricerca Scientifica, 00133 Roma, Italy.}
\email{{\tt liverani@mat.uniroma2.it}}

\begin{abstract}
We discuss sufficient conditions that guarantee the existence of asymptotic expansions for the CLT for \textit{weakly dependent} random variables including observations arising from sufficiently chaotic dynamical systems like piece-wise expanding maps, and strongly ergodic Markov chains. We primarily use spectral techniques to obtain the results. 
\end{abstract}
\maketitle

\section{Introduction}\label{Intro}
Let $S_N=\sum_{n=1}^N X_n$ be a sum of \textit{weakly dependent} random variables. We say that 
$S_N$ satisfies the Central Limit Theorem if there are real constants $A$ and $\sigma>0$ such that
\begin{equation}\label{CLTConv}
\lim_{N\to\infty} \Prob\left(\frac{S_N-NA}{\sqrt{N}}\leq z\right)=\fN(z) 
\end{equation}
where $$\fN(z)=\int_{-\infty}^z \fn (y) dy \text{ and } \fn(y)=\frac{1}{\sqrt{2\pi\sigma^2} } e^{-\frac{y^2}{2\sigma^2}}. $$
An important problem is to estimate the rate of convergence of \eqref{CLTConv}.

To this end, an asymptotic expansion, now commonly referred to as the Edgeworth
expansion, was formally derived by Chebyshev in 1859. 

\begin{defin} \label{EdgeExpDef}
$S_N$ admits Edgeworth expansion of order $r$ if there are polynomials 
$P_1(z),\dots, P_r(z)$ such that
$$ \Prob\left(\frac{S_N-NA}{\sqrt{N}}\leq z\right)-\fN(z)=\sum_{p=1}^r \frac{P_p(z)}{N^{p/2}} \fn(z)+
o\left(N^{-r/2}\right)
 $$
uniformly for $z \in \reals$. 
\end{defin}
\begin{rem}\label{Uniq1}
It is an easy observation that order $r$ Edgeworth expansion of $S_N$, if it exists, is unique. Suppose $\{P_p(z)\}_p$ and $\{\tilde{P}_p(z)\}_p$, $1 \leq p \leq r$ are polynomials corresponding to two Edgeworth expansions. Then,
$$
\sum_{p=1}^r \frac{P_p(z)}{N^{p/2}} \fn(z) = \sum_{p=1}^r \frac{\tilde{P}_p(z)}{N^{p/2}} \fn(z)+ o\left(N^{-r/2}\right).
$$
Multiplying by $\sqrt{N}$ taking the limit $N \to \infty$ we have $P_1(z)=\tilde{P}_1(z)$. Therefore, 
$$
\sum_{p=2}^r \frac{P_p(z)}{N^{p/2}} \fn(z) = \sum_{p=2}^r \frac{\tilde{P}_p(z)}{N^{p/2}} \fn(z)+ o\left(N^{-r/2}\right)
$$
Then, multiplying by $N$ and taking $N \to \infty$, $P_2(z)=\tilde{P}_2(z)$. Continuing this $r$ times we can conclude $P_p(z) = \tilde{P}_p(z)$ for $1\leq p \leq r$.
\end{rem}

When $X_i$'s are independent and identically distributed (i.i.d.), it is known that the order $1$ Edgeworth expansion exists if and only if the distribution of $X$ is non-lattice (see \cite{ES}). Therefore the following asymptotic expansion for the Local Central Limit Theorem (LCLT) for lattice random variables is also useful.
\begin{defin} \label{LatticeEdgeExpDef}
Suppose that $X_n$'s are integer valued. We say that $S_N$ admits a lattice Edgeworth expansion of order $r$, if there are polynomials $P_{0,d}, \dots, P_{r,d}$ and a number $A$ such that
$$ \sqrt{N}\Prob(S_N=k)=\fn\left(\frac{k-NA}{\sqrt{N}}\right) 
\sum_{p=0}^r \frac{P_{p,d}((k-NA)/\sqrt{N})}{N^{p/2}}+o\left(N^{-r/2}\right) $$
uniformly for $k \in \integers$. 
\end{defin}

\begin{rem}
As in \cref{Uniq1}, we can prove the uniqueness of this expansion. Because $P_{p,d}$'s have finite degree, say at most $q$, choose $N$ large enough so that $S_N$ has more than $q$ values. Then the argument in \cref{Uniq1} applies mutatis mutandis. 
\end{rem}

During the 20th century, the work of Lyapunov, Edgeworth, Cram\'er, Kolmogorov, Ess\'een, Petrov, Bhattacharya and many others led to the development of the theory of asymptotic expansions of these two forms. See \cite{Ha, IL} and references therein, for more details.

In \cite{Br}, weak (or functional) forms of Edgeworth expansions are introduced. These expansions yield the asymptotics of $\EXP(f(S_N))$. 

Let $(\cF,\|\cdot\|)$ be a function space.

\begin{defin} \label{WGEdgeExpDef}
$S_N$ admits weak global Edgeworth expansion of order $r$ for $f \in \cF$, if there are polynomials $P_{0,g}(z),\dots P_{r,g}(z)$ and $A$ $($which are independent of $f)$ such that 
$$ \EXP(f(S_N-NA))= \sum_{p=0}^r \frac{1} {N^{\frac{p}{2}}} \int P_{p,g}(z) \fn(z) 
f\big(z\sqrt{N}\big) dz+\|f\|\hspace{2pt}o\left(N^{-(r+1)/2}\right).$$
\end{defin}

\begin{defin} \label{WLEdgeExpDef}
$S_N$ admits weak local Edgeworth expansion of order $r$ $f \in \cF$, if there are polynomials $P_{0,l}(z),\dots P_{r,l}(z)$ and $A$ $($which are independent of $f)$ such that 
$$ \sqrt{N}\EXP(f(S_N-NA))=\frac{1}{2\pi} \sum_{p=0}^{\lfloor r/2 \rfloor} \frac{1}{N^{p}}\int  P_{p,l}(z) f(z) dz+\|f\|\hspace{2pt}o\left(N^{-r/2}\right).
 $$
\end{defin}

We also introduce the following asymptotic expansion which yields an averaged form of the error of approximation. 
\begin{defin}\label{AveEdgeExpDef}
$S_N$ admits averaged Edgeworth expansion of order $r$ if there are polynomials 
$P_{1,a}(z),\dots P_{r,a}(z)$ and numbers $k,m$ such that for $f\in \cF$ we have
\begin{multline*}
\int \left[\Prob\left(\frac{S_N-NA}{\sqrt{N}}\leq z+\frac{y}{\sqrt{N}}\right)-\fN\left(z+\frac{y}{\sqrt{N}}\right)\right] f(y) dy \\ = \sum_{p=1}^r \frac{1} {N^{p/2}} \int P_{p,a}\left(z+\frac{y}{\sqrt{N}}\right) 
\fn\left(z+\frac{y}{\sqrt{N}}\right) 
f\left(y\right) dy+\|f\|\hspace{2pt}
o\left(N^{-r/2}\right).
\end{multline*}
\end{defin}

\begin{rem}
All of these weak forms of expansions are unique provided that $\cF$ is dense in $C^\infty_c$ with respect to $\|\cdot\|_\infty$. If there are two different weak global expansions with polynomials $\{P_{p,g}\}$ and $\{\tilde{P}_{p,g}\}$, the argument in \cref{Uniq1} yields, $$\int P_{p,g}(z) \fn(z) 
f\big(z\sqrt{N}\big) dz = \int \tilde{P}_{p,g}(z) \fn(z) 
f\big(z\sqrt{N}\big) dz$$
for all $f \in C^\infty_c$ which gives us the equality, $P_{p,g}(z)=\tilde{P}_{p,g}(z)$. The same idea works for the other two expansions. 
\end{rem}

We have seen that these asymptotic expansions are unique. They also form a hierarchy. Because the dependencies among the expansions are independent of the abstract setting we introduce in \cref{results}, we postpone the discussion about this hierarchy to \Cref{appen}. Due to this hierarchy, in the absence of one, others can be useful in extracting information about the rate of convergence in \eqref{CLTConv}.  

Previous results on existence of Edgeworth expansions (see \cite{Feller2}) assumes independence of random variables $X_n$.  For many applications the independence assumption of random variables is too restrictive. Because of this reason there have been attempts to develop a theory of Edgeworth expansions for weakly dependent random variables where weak dependence often refers to asymptotic decorrelation. See \cite{CP, GH, HP, NG1, NG2} for such examples. The primary focus of these is the classical Edgeworth expansions introduced in \Cref{EdgeExpDef} and \Cref{LatticeEdgeExpDef}. 

Except in \cite{CP}, the sequences of random variables considered are uniformly ergodic Markov processes with strong recurrent properties or processes approximated by such Markov processes. In \cite{CP}, the authors consider aperiodic subshifts of finite type endowed with a stationary equilibrium state and give explicit construction of the order 1 Edgeworth expansion. They also prove the existence of higher order classical Edgeworth expansions under a rapid decay assumption on the tail of the characteristic function. 

The goal of our paper is to generalize these results and to provide suffcient conditions that guarantee the existence of Edgeworth expansions for weakly dependent random variables including observations arising from sufficiently chaotic dynamical systems, and strongly ergodic Markov chains. In fact, we introduce a widely applicable theory for both classical and weak forms of Edgeworth expansions and significantly improve pre-existing results. In \cref{results}, we discuss these in detail.

The paper is organized as follows. In \cref{results}, we introduce the abstract setting we work on and state the main results on existence of Edgeworth expansions. In \cref{proofs}, we prove these results by constructing the Edgeworth polynomials using the characteristic function and concluding that they satisfy the specific asymptotics. In \cref{Coeff}, we relate the coefficients of these polynomials to moments of $S_N$ and provide an algorithm to compute coefficients. A few applications of the Edgeworth expansions such as Local Limit Theorems and Moderate Deviations, are discussed in \cref{App}. In the last section, we give examples of sequences of random variables for which our theory can be applied. These include observations arising from piecewise expanding maps of an interval, markov chains with finitely many states and markov processes which are strongly ergodic. 
\section{Main results.}\label{results}
We assume that there is a Banach space $\Ban$, a family of bounded linear operators $\cL_t:\Ban\to\Ban$ and vectors $v\in \Ban, \ell \in \Ban'$ such that
\begin{equation}
\label{MainAssum}
 \EXP\left(e^{it S_N}\right)=\ell(\cL^N_t v),\ t\in \reals. 
\end{equation}

We will make the following assumptions on the family $\cL_t.$ \vskip2mm
\begin{itemize} \setlength\itemsep{9pt}
\item[(A1)] $t \mapsto \cL_t$ is continuous and there exists $s \in \naturals$ and $\delta > 0$ such that $t\mapsto \cL_t$ is $s$ times continuously differentiable for $|t| \leq\delta$.
\item[(A2)] 1 is an isolated and simple eigenvalue of $\cL_0$, all other eigenvalues of $\cL_0$ have absolute value less than $1$ and its essential spectrum is contained strictly inside the disk of radius $1$ (spectral gap).
\item[(A3)] For all $t\neq 0$, 
sp$(\cL_t)\subset \{|z|<1\}$.
\item[(A4)] There are positive real numbers $K, r_1,r_2$ and $N_0$ such that $\left\Vert \cL_t^N \right\Vert \leq \frac{1}{N^{r_2}}$ for all $t$ satisfying $K\leq |t| \leq N^{r_1}$ and $N>N_0$.
\end{itemize}
\begin{rem}\label{OnA1-A4}\
\begin{itemize}
\item[$1.$] In practice we check $(A3)$ by showing that when $t \neq 0$, the spectral radius of $\cL_t$ is at most $1$ and $\cL_t$ does not have an eigenvalue on the unit circle. Because the spectrum of a linear operator is a closed set this would imply that {\upshape sp}$(\cL_t)$ is contained in a closed disk strictly inside the unit disk. \item[$2.$] Suppose $(A4)$ holds. Let $N_1>N_0$ be such that $N^{(r_1-\epsilon)/r_1}_1>N_0$. Then, for all $N>N_1,$
\begin{align*}
\phantom{aaaaa}\|\cL^N_t\|&\leq \|(\cL^{\lceil N^{(r_1-\epsilon)/r_1} \rceil }_t)^{N^{\epsilon/r_1}_1}\| \leq \|(\cL^{\lceil N^{(r_1-\epsilon)/r_1} \rceil }_t)\|^{N^{\epsilon/r_1}_1} \\ &\leq \frac{1}{\lceil N^{(r_1-\epsilon)/r_1}  \rceil^{r_2N^{\epsilon/r_1}_1}}\ \text{for}\ K \leq |t| \leq N^{r_1-\epsilon}\\ &\leq \frac{1}{N^{r_2K_{N_1}}}
\end{align*}
where $K_{N_1}=\frac{r_1-\epsilon}{r_1}N^{\epsilon/r_1}$. 
Therefore fixing $N_1$ large enough we can make $r_2K_{N_1}$ as large as we want. Hence, given $(A4)$, by slightly decreasing $r_1$, we may assume $r_2$ is sufficiently large. 
\item[$3.$] Suppose $(A1), (A2)$ and $(A3)$ are satisfied with $s \geq 3$. Then, \cite[Theorem 2.4]{G} implies that there exists $A \in \reals$ and $\sigma^2 \geq 0$ such that  
\begin{equation}\label{CLT}
\phantom{aaaaaa}\frac{S_N-NA}{\sqrt{N}} \xrightarrow{d} \cN(0,\sigma^2).
\end{equation}
Our interest is in $S_N$ that satisfies the CLT i.e.\hspace{3pt}the case $\sigma^2 >0$. In applications we specify conditions which guarantee this. Therefore, in the following theorems we always assume that $\sigma^2 >0$. 
\end{itemize}

\end{rem}

Now we are in a position to state our first result on the existence of the classical Edgeworth expansion for weakly dependent random variables.
 
\begin{thm}\label{EdgeExp}
Let $r\in \mathbb{N}$ with $r \geq 2$. Suppose $(A1)$ through $(A4)$ hold with $s=r+2$ and $r_1> \frac{r-1}{2}$. 
Then $S_N$ admits the Edgeworth expansion of order $r$.
\end{thm}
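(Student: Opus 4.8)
The plan is to follow the classical Berry--Esseen/Edgeworth strategy, adapted to the spectral setting provided by \eqref{MainAssum} and (A1)--(A4). Write $\phi_N(t)=\EXP(e^{it(S_N-NA)/\sqrt N})=e^{-itA\sqrt N}\ell(\cL_{t/\sqrt N}^N v)$ for the characteristic function of the normalized sum. By the smoothing inequality (Esseen's lemma), controlling $\sup_z$ of the left-hand side in \cref{EdgeExpDef} reduces to (i) estimating $\int_{|t|\le T_N}\bigl|\phi_N(t)-\widehat{G}_{r,N}(t)\bigr|\,\frac{dt}{|t|}$, where $G_{r,N}$ is the candidate signed measure with density $\fn(z)\bigl(1+\sum_{p=1}^r P_p(z)N^{-p/2}\bigr)$ and $\widehat{G}_{r,N}$ its Fourier transform, together with (ii) a bound on the derivative/tail of $G_{r,N}$; here $T_N$ will be taken to grow like a small power of $N$, which is exactly what the hypothesis $r_1>\frac{r-1}{2}$ is designed to accommodate.

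The core of the argument is the local analysis near $t=0$. By (A1)--(A2), for $|t|\le\delta$ the operator $\cL_t$ has a spectral decomposition $\cL_t=\lambda(t)\Pi_t+Q_t$ with $\lambda(t)$ the perturbed leading eigenvalue, $\Pi_t$ the spectral projector, $\|Q_t^N\|\le C\theta^N$ for some $\theta<1$, and—since $s=r+2$—the maps $t\mapsto\lambda(t),\Pi_t,Q_t$ are $C^{r+2}$. Thus $\ell(\cL_{t/\sqrt N}^N v)=\lambda(t/\sqrt N)^N\,\ell(\Pi_{t/\sqrt N}v)+O(\theta^N)$. Writing $\log\lambda(t)=iAt-\tfrac{\sigma^2}{2}t^2+\sum_{j=3}^{r+2}\tfrac{c_j}{j!}(it)^j+o(t^{r+2})$ (the CLT from Remark, item 3, identifies the first two coefficients, and $\sigma^2>0$ is assumed), we get
\begin{equation*}
\lambda(t/\sqrt N)^N=\exp\!\Bigl(iAt\sqrt N-\tfrac{\sigma^2 t^2}{2}+\sum_{j=3}^{r+2}\frac{c_j (it)^j}{j!\,N^{(j-2)/2}}+o(N^{-r/2})\Bigr),
\end{equation*}
and similarly expand $\ell(\Pi_{t/\sqrt N}v)=1+\sum_{j=1}^{r}\frac{b_j(it)^j}{N^{j/2}}+o(N^{-r/2})$ (using $\ell(\Pi_0 v)=1$, which follows from \eqref{MainAssum} at $t=0$). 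Multiplying these out and Taylor-expanding the exponential of the correction terms gives, on the range $|t|\le\epsilon\sqrt N$,
\begin{equation*}
e^{-iAt\sqrt N}\lambda(t/\sqrt N)^N\ell(\Pi_{t/\sqrt N}v)=e^{-\sigma^2 t^2/2}\Bigl(1+\sum_{p=1}^r\frac{\pi_p(it)}{N^{p/2}}\Bigr)+e^{-\sigma^2t^2/2}\,o(N^{-r/2}),
\end{equation*}
with $\pi_p$ explicit polynomials; defining $P_p$ so that $\fn(z)P_p(z)$ has Fourier transform $e^{-\sigma^2 t^2/2}\pi_p(it)$ (i.e.\ $P_p$ is a polynomial in the Hermite-type sense, via $\pi_p(it)e^{-\sigma^2t^2/2}=\widehat{(P_p\fn)}(t)$) produces the claimed expansion. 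One must also check the remainder $e^{-\sigma^2t^2/2}o(N^{-r/2})$ integrates against $dt/|t|$ over $|t|\le\epsilon\sqrt N$ to $o(N^{-r/2})$, which is routine since the Taylor remainders carry enough powers of $t/\sqrt N$ and the Gaussian controls large $t$.

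The remaining ranges are handled by the global hypotheses: for $\epsilon\sqrt N\le|t|\le K\sqrt N$ one uses that $\sup_{\epsilon\le|s|\le K}$ spectral radius of $\cL_s$ is $<1$ (consequence of (A3) and compactness, as in Remark item 1), giving exponential-in-$N$ decay of $\ell(\cL_{t/\sqrt N}^N v)$; for $K\sqrt N\le|t|\le T_N$ with $T_N=N^{r_1}\cdot$(suitable constant, so that $K\le|t/\sqrt N|\le (|t|)\le T_N/\sqrt N \le N^{r_1}$... more precisely we need $|t/\sqrt N|\le N^{r_1}$, i.e.\ $|t|\le N^{r_1+1/2}$, so in fact $T_N$ can be taken a small power beyond $\sqrt N$) one invokes (A4) directly: $\|\cL_{t/\sqrt N}^N\|\le N^{-r_2}$, and after Remark item 2 we may take $r_2$ as large as needed, so $\int_{|t|\le T_N}|\phi_N(t)|\,dt/|t|\ll N^{-r_2}\log N = o(N^{-r/2})$. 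Finally the smoothing inequality contributes an error $O(1/T_N)$ from truncating at $T_N$; since $T_N$ is at least $\asymp \sqrt N\cdot N^{\eta}$ for some $\eta>0$ when $r_1>\tfrac{r-1}{2}$—this is where that inequality enters: we need $1/T_N=o(N^{-r/2})$, i.e.\ $T_N$ growing faster than $N^{r/2}$, which the admissible range $|t/\sqrt N|$ up to $N^{r_1}$ gives precisely when $r_1+\tfrac12>\tfrac r2$—all pieces are $o(N^{-r/2})$ uniformly in $z$.

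The main obstacle, and the step deserving the most care, is the bookkeeping in the local expansion: one must verify that the formal Taylor expansion of $\lambda(t/\sqrt N)^N\,\ell(\Pi_{t/\sqrt N}v)$ is valid \emph{with a remainder uniform for $|t|\le\epsilon\sqrt N$} and that this remainder, divided by $|t|$, integrates to $o(N^{-r/2})$—this requires tracking how the $C^{r+2}$-smoothness (the reason $s=r+2$, not $r$) feeds exactly $r$ usable orders after the loss of two orders to the Gaussian normalization, and requires a slightly delicate split of the $|t|\le\epsilon\sqrt N$ range (e.g.\ $|t|\le\log N$ versus $\log N\le|t|\le\epsilon\sqrt N$) to absorb the Taylor error against the Gaussian tail. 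Everything else is either a direct citation of the quasi-compactness/perturbation theory (as in \cite{G}) or a bookkeeping exercise with the smoothing inequality.
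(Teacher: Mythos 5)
Your overall strategy coincides with the paper's: Esseen's smoothing inequality \eqref{eq:basic}, the Nagaev--Guivarc'h decomposition $\cL_t=\mu(t)\Pi_t+\Lambda_t$ for $|t|\le\delta$, Taylor expansion of $\log\mu$ to order $r+2$ and of $Z(t)=\ell(\Pi_t v)$ to order $r$, collection of powers of $N^{-1/2}$ into polynomials, Fourier inversion to define $P_p$, and the three-range splitting $|t|<\delta\sqrt N$ (local expansion), $\delta\sqrt N<|t|<\bar\delta\sqrt N$ (spectral radius $<1$ from (A3) plus compactness), and $\bar\delta\sqrt N<|t|<T_N$ (from (A4)). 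Your accounting of where $r_1>\frac{r-1}{2}$ enters --- the truncation level $T_N$ must exceed $N^{r/2}$ while keeping $|t|/\sqrt N\le N^{r_1}$ --- is also the paper's.

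There is, however, one step that fails as written. You record the non-leading spectral contribution as $\ell(\cL^N_{t/\sqrt N}v)=\mu(t/\sqrt N)^N\,\ell(\Pi_{t/\sqrt N}v)+O(\theta^N)$ and propose to feed the error into the smoothing integral $\int_{|t|\le T_N}|\cdot|\,\frac{dt}{|t|}$. A bound that is merely uniform in $t$ is not integrable against $dt/|t|$ at the origin: $\int_{|t|<\delta\sqrt N}\theta^N|t|^{-1}\,dt$ diverges. The paper repairs this by noting that $Z(0)=1$ forces $\ell(\Lambda_0^N v)=0$ for every $N$, and then using the resolvent contour representation \eqref{ResidualProj} to obtain $\|\Lambda^N_t-\Lambda^N_0\|=\cO(|t|\,\ve_0^N)$, so that
\begin{equation*}
\int_{|t|<\delta\sqrt N}\Big|\frac{\ell(\Lambda^N_{t/\sqrt N}v)}{t}\Big|\,dt
=\int_{|t|<\delta\sqrt N}\Big|\frac{\ell\big((\Lambda^N_{t/\sqrt N}-\Lambda^N_0)v\big)}{t}\Big|\,dt=\cO(\ve_0^N).
\end{equation*}
Some such argument (or an equivalent observation that the full difference $\widehat F_N(t)-\widehat{\cE}_{r,N}(t)$ vanishes at $t=0$ together with a Lipschitz bound in $t$) is needed before your step (i) is controlled near $t=0$; with it, the rest of your outline goes through as in the paper.
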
 

Next, we examine the error of the order $1$ Edgeworth expansion in more detail. 
We first show that the order 1 expansion exists if (A1) through (A3) hold with $s=3$. Next, we show that the error of approximation can be improved if (A4) holds.

\begin{thm}\label{O1Exp}
Suppose $(A1)$ through $(A3)$ hold with $s \geq 3$. Then, the order $1$ Edgeworth expansion exists.
\end{thm}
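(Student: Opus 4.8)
The plan is to invert the characteristic function identity \eqref{MainAssum} and extract the leading correction term to the Gaussian. Under $(A1)$–$(A3)$ with $s=3$, perturbation theory for the isolated simple eigenvalue $1$ of $\cL_0$ gives, for $|t|\le\delta$, a decomposition $\cL_t=\lambda(t)\,\Pi_t+Q_t$, where $\lambda(t)$ is the leading eigenvalue, $\Pi_t$ the associated spectral projector, $Q_t$ has spectral radius bounded by some $\theta<1$ uniformly near $0$, and $t\mapsto\lambda(t),\Pi_t,Q_t$ are $C^3$. Hence $\EXP(e^{itS_N})=\lambda(t)^N\ell(\Pi_t v)+\ell(Q_t^N v)$, and the remainder term is $O(\theta^N)$ uniformly on $|t|\le\delta$. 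Writing $\lambda(t)=\exp(iA t-\tfrac{\sigma^2}{2}t^2+i c_3 t^3+o(t^3))$ as $t\to0$ (with $A,\sigma^2$ from \eqref{CLT} and $c_3$ a real constant coming from the third derivative of $\log\lambda$), and $\ell(\Pi_t v)=1+i b_1 t+o(t)$, I would substitute $t\to t/\sqrt N$ and expand: after centering by $NA$, the characteristic function of $(S_N-NA)/\sqrt N$ is
\[
e^{-\sigma^2 t^2/2}\Big(1+\frac{1}{\sqrt N}\big(i c_3' t^3 + i b_1 t\big)+o(N^{-1/2})\Big)
\]
uniformly for $t$ in compact sets, where $c_3'=c_3$ up to normalization.

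Next I would set up the Fourier inversion for the distribution function. The standard device (Esseen's smoothing inequality, as in \cite{Feller2} or \cite{IL}) reduces the claim to: (i) controlling $\int_{|t|\le\delta\sqrt N}$ of the difference between the actual characteristic function and its formal expansion $e^{-\sigma^2 t^2/2}(1+N^{-1/2}Q(it))$, where $Q$ is the cubic polynomial above, divided by $t$; and (ii) controlling the contribution of $\delta\sqrt N\le|t|\le T_N$ for a suitable truncation $T_N$, plus the tail of the smoothing kernel. For (i), split into $|t|\le\epsilon\sqrt N$ (small-$t$ regime, where the $C^3$ Taylor expansion of $\lambda,\Pi$ plus the $O(\theta^N)$ bound on $\ell(Q_t^N v)$ give an integrable $o(N^{-1/2})$ bound via dominated convergence, using that $|\lambda(t)|\le e^{-\sigma^2 t^2/4}$ for small $t$) and $\epsilon\sqrt N\le|t|\le\delta\sqrt N$ (where $|\lambda(t)|\le e^{-c}<1$ raised to the $N$ gives exponential decay). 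The polynomial $P_1(z)$ in Definition \ref{EdgeExpDef} is then obtained by the Fourier identity $\int e^{-itz}e^{-\sigma^2t^2/2}(it)^k\,\frac{dt}{2\pi}=(-1)^k\fn^{(k)}(z)$, so that the cubic-plus-linear correction becomes a polynomial multiple of $\fn(z)$; one reads off $P_1$ explicitly in terms of $c_3'$ and $b_1$ (this connects to \cref{Coeff}).

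The one genuinely delicate point — and the reason $(A4)$ is \emph{not} needed here but the order is limited to $1$ — is step (ii): the range $|t|\ge\delta$ (equivalently $|t|\ge\delta\sqrt N$ after rescaling). Without $(A4)$ we have no quantitative bound on $\|\cL_t^N\|$ for $|t|$ away from $0$ — only $(A3)$, i.e. $\mathrm{sp}(\cL_t)\subset\{|z|<1\}$, which by the spectral radius formula gives $\|\cL_t^N\|^{1/N}\to\rho(t)<1$ pointwise but not uniformly, and not even a rate. The trick is that for the order-$1$ expansion we only need the \emph{smoothed} error to be $o(N^{-1/2})$, and Esseen smoothing with bandwidth $T_N=\delta\sqrt N$ leaves a residual of size $O(1/T_N)=O(N^{-1/2})$ from the kernel tail alone — which is exactly borderline. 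To beat it, I would choose the smoothing bandwidth to grow slightly faster, $T_N = N^{1/2}\omega(N)$ with $\omega(N)\to\infty$ slowly, and handle the new range $\delta\sqrt N\le|t|\le T_N$ by a compactness/continuity argument: by $(A1)$ and $(A3)$, $t\mapsto\|\cL_t^N\|$ is continuous and on the compact annulus $\delta\le|t|\le T$ (any fixed $T$) one has, for each such $t$, $\|\cL_t^n\|\to0$; a Baire/uniform-boundedness type argument (or directly: cover the annulus by finitely many pieces on each of which a power $\cL_t^{n_0}$ has norm $\le 1/2$, using joint continuity in $t$) yields $\sup_{\delta\le|t|\le T}\|\cL_t^N\|\to 0$, hence integrating over this (growing) range still gives $o(N^{-1/2})$ after division by $t\ge\delta\sqrt N$. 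This uniformization on compacta is the main obstacle and must be done carefully; it is precisely where higher-order expansions break down and force the introduction of $(A4)$ (needed for \Cref{EdgeExp}), since there the error target $o(N^{-r/2})$ with $r\ge2$ cannot tolerate a polynomially growing integration range without a quantitative decay rate for $\|\cL_t^N\|$.

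Finally, I would assemble the pieces: the smoothed distribution function of $(S_N-NA)/\sqrt N$ equals $\fN(z)+N^{-1/2}P_1(z)\fn(z)+o(N^{-1/2})$ uniformly in $z$, and Esseen's inequality upgrades this from the smoothed version to the genuine distribution function with the same error order, using the uniform boundedness of $\fn$ and its derivatives. Uniqueness of $P_1$ is already noted in \cref{Uniq1}.

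\endinput
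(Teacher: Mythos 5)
Your small-$t$ analysis (perturbative decomposition $\cL_t=\lambda(t)\Pi_t+Q_t$, the $C^3$ expansion of $\log\lambda$ and $\ell(\Pi_t v)$, the $O(\theta^N)$ remainder, and the Fourier identification of $P_1$) matches the paper's proof. The gap is in your step (ii). You take the Esseen bandwidth $T_N=\sqrt N\,\omega(N)$ with $\omega(N)\to\infty$ so that the kernel residual $C_0/T_N$ is $o(N^{-1/2})$ outright, and you then must bound $\int_{\delta\sqrt N\le|t|\le T_N}|\EXP(e^{itS_N/\sqrt N})/t|\,dt$, which after rescaling is $\int_{\delta\le|s|\le\omega(N)}\|\cL_s^N\|\,|s|^{-1}ds$ up to constants. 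Your compactness argument (cover a \emph{fixed} annulus $\delta\le|s|\le T$ by finitely many pieces on each of which some power has norm $\le 1/2$) is correct and yields $\sup_{\delta\le|s|\le T}\|\cL_s^N\|\le C_T\gamma_T^N$ — but only for fixed $T$; the constants $C_T,\gamma_T$ come from a finite subcover and degenerate as $T\to\infty$. On the growing annulus $\delta\le|s|\le\omega(N)$, assumptions $(A1)$--$(A3)$ give no uniform control at all on $\|\cL_s^N\|$ for large $|s|$ — that is precisely the content of $(A4)$, which this theorem deliberately avoids. So as written, step (ii) does not close.

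The fix, which is what the paper does, is to keep the bandwidth at $T=B\sqrt N$ with $B$ \emph{fixed} (but arbitrary): the rescaled mid-range is then the fixed compact annulus $\delta\le|s|\le B$, where your own covering argument gives $\sup\|\cL_s^N\|\le C_B\gamma_B^N$ and hence an exponentially small contribution. The kernel residual is then only $C_0/(B\sqrt N)$, i.e.\ $O(N^{-1/2})$ rather than $o(N^{-1/2})$ — but since $B$ can be chosen so that $C_0/B<\ve$ for any prescribed $\ve>0$, and all the other terms are $o(N^{-1/2})$ for each fixed $B$, one gets $\limsup_N \sqrt N\,\sup_x|F_N(x)-\cE_{1,N}(x)|\le\ve$ for every $\ve$, which is the desired $o(N^{-1/2})$. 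With this replacement of step (ii), the rest of your argument goes through and coincides with the paper's proof.
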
 

\begin{thm}\label{O1ExpError}
Suppose $(A1)$ through $(A4)$ hold with $s \geq 4$. Then,
$$ \Prob\left(\frac{S_N-NA}{\sqrt{N}}\leq z\right)=\fN(z)+\frac{P_1(z)}{N^{1/2}} \fn(z)+
\cO\left(\frac{1}{N^s}\right)
 $$
where $s=\min\big\{1,\frac{1}{2}+r_1 \big\}$.
\end{thm}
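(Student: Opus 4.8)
The plan is to use the Fourier-inversion / smoothing (Esseen) approach, combined with the spectral decomposition afforded by (A1)--(A3) and the quantitative tail bound (A4), refining the proof of \Cref{O1Exp} to track the error term explicitly. First I would write, via the Esseen smoothing inequality,
\[
\sup_z\left|\Prob\left(\tfrac{S_N-NA}{\sqrt N}\le z\right)-\fN(z)-\tfrac{P_1(z)}{\sqrt N}\fn(z)\right|
\le C\int_{-T}^{T}\left|\tfrac{\psi_N(t)-\wh G_N(t)}{t}\right|\,dt + \tfrac{C}{T},
\]
where $\psi_N(t)=\EXP(e^{it(S_N-NA)/\sqrt N})=e^{-itNA/\sqrt N}\ell(\cL_{t/\sqrt N}^N v)$ by \eqref{MainAssum}, and $\wh G_N$ is the Fourier--Stieltjes transform of the candidate order-$1$ Edgeworth signed measure. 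The point is that (A4) allows the choice $T=N^{1/2+r_1}$ (after passing to $|t|\ge K\sqrt N$ where one writes $t=u\sqrt N$, $|u|\ge K$, and $|\psi_N(t)|=|\ell(\cL_u^N v)|\le\|\ell\|\|v\|\,N^{-r_2}$ with $r_2$ as large as we please by Remark \ref{OnA1-A4}.2), so the $C/T$ term is $\cO(N^{-1/2-r_1})$; this is where the exponent $\tfrac12+r_1$ enters.

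Next I would split $[-T,T]$ into three ranges. On the central range $|t|\le\delta\sqrt N$, i.e.\ $|u|=|t|/\sqrt N\le\delta$, use (A1) (with $s\ge4$) and (A2): the perturbed spectral projection gives $\ell(\cL_u^N v)=\lambda(u)^N\,\ell(\Pi_u v)+\cO(\rho^N)$ for some $\rho<1$, where $\lambda(u)=1+iAu-\tfrac{\sigma^2}{2}u^2 + c_3 u^3 + \cO(u^4)$ is the leading eigenvalue, which is $C^4$ in $u$ near $0$. Substituting $u=t/\sqrt N$, a Taylor expansion to fourth order yields $\psi_N(t)=e^{-\sigma^2 t^2/2}(1+\tfrac{p_1(t)}{\sqrt N}) + \cO(N^{-1}(1+|t|^6)e^{-\sigma^2 t^2/4})$ for $|t|\le \delta' N^{1/6}$, say, with $p_1$ the cubic giving rise to $P_1$; the exact order-$1$ term matches $\wh G_N$, and the remainder integrates (against $dt/|t|$) to $\cO(N^{-1})$. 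On the intermediate range $\delta' N^{1/6}\le|t|\le\delta\sqrt N$ the factor $\lambda(u)^N$ with $|\lambda(u)|\le e^{-c\sigma^2 u^2}$ forces $|\psi_N(t)|\le e^{-c\sigma^2 t^2}$, which together with the Gaussian bound on $\wh G_N$ contributes an exponentially small amount. On the outer range $\delta\sqrt N\le|t|\le N^{1/2+r_1}$ I use (A3) plus (A4): for $\delta\le|u|\le K$ apply (A3) and continuity of $u\mapsto\cL_u$ to get a uniform $\|\cL_u^N\|\le C\theta^N$, $\theta<1$; for $K\le|u|\le N^{r_1}$ apply (A4) directly; in both subcases $|\psi_N(t)/t|$ is super-polynomially small and the integral is $\cO(N^{-1})$.

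Collecting the three estimates, the dominant error is $\max\{C N^{-1}, C N^{-1/2-r_1}\}=\cO(N^{-\min\{1,\,1/2+r_1\}})$, which is exactly the claimed bound. The polynomial $P_1$ is the same one produced in the proof of \Cref{O1Exp} (and identified in \cref{Coeff} in terms of the first few cumulants of $S_N/\sqrt N$), so no new construction is needed; one only has to verify that the candidate Edgeworth measure's transform $\wh G_N(t)$ agrees with the central expansion of $\psi_N(t)$ up to the stated order, and is itself negligible outside the central range — both routine since $\wh G_N(t)=e^{-\sigma^2t^2/2}(1 + (\text{cubic in }it)/\sqrt N)$ up to a harmless truncation (one should be slightly careful that $P_1(z)\fn(z)$ is not a finite measure's density on the nose, but the standard device of differentiating $\fN$ and using that $\int(1+|t|^k)e^{-\sigma^2t^2/2}\,dt<\infty$ handles this).

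The main obstacle is the bookkeeping in the central range: one must Taylor-expand $\lambda(t/\sqrt N)^N$ to exactly the order that produces the $N^{-1/2}$ term cleanly while controlling the next term by $\cO(N^{-1})$ \emph{uniformly} after integration against $dt/|t|$ near $t=0$ and out to $|t|\asymp N^{1/6}$ — in particular checking that the $\cO(u^4)$ error in $\lambda$, raised to the $N$-th power and multiplied by $N$, still integrates to $\cO(N^{-1})$ rather than something larger, and that the non-leading eigenvalue contribution $\cO(\rho^N)$ is genuinely harmless over the whole range $|t|\le\delta\sqrt N$. This is where $s\ge4$ is used (so that $\lambda(u)$ is $C^4$ and the cubic remainder is $\cO(u^4)$), and it is the only genuinely delicate point; the outer ranges are soft consequences of (A3)--(A4) as in \Cref{O1Exp}.
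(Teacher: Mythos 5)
Your proposal is correct and follows essentially the same route as the paper: the Esseen smoothing inequality with cutoff $T=N^{1/2+r_1}$ (whence the exponent $\tfrac12+r_1$), a spectral/Taylor expansion of the leading eigenvalue using $s\ge 4$ to push the central-range error to $\cO(N^{-1})$, and (A3) together with (A4) (with $r_2$ made large via Remark \ref{OnA1-A4}) to make the intermediate and outer ranges negligible. The only cosmetic difference is your extra subdivision of the central range at $|t|\asymp N^{1/6}$, which the paper avoids by exploiting that the remainder $\varphi$ in \eqref{PolyComp} is $C^1$ with $\varphi(0)=0$, so that $\varphi(t/\sqrt N)\sim t/\sqrt N$ uniformly over the whole range $|t|\le\delta\sqrt N$.
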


As one would expect, \Cref{O1ExpError} shows that more precise asymptotics than the usual $o(N^{-\frac{1}{2}})$ can be obtained when the characteristic function has better decay. Its proof shows that the error depends mostly on the expansion of the characteristic function at $0$. This is an indication that the error in \Cref{O1Exp} cannot be improved more than by a factor of $\frac{1}{\sqrt{N}}$ even when $r_1$ is large.

In \cite{CP}, analogous results are obtained for subshifts of finite type in the stationary case and an explicit description of the first order Edgeworth expansion is given. Here, we consider a wider class of (not necessarily stationary) sequences and give explicit descriptions of higher order Edgeworth polynomials by relating the coefficients to asymptotic moments. Also, we improve the condition $$H_r:\ |\EXP(e^{itS_N})|\leq K\bigg(1-\frac{c}{|t|^\alpha}\bigg)^n,\ \frac{\alpha(r-1)}{2}<1,\ |t|>K$$
found in \cite{CP} by replacing it with (A4). In addition, this allows us to obtain better asymptotics for the first order expansion. 

We also extend the results on the existence of weak Edgeworth expansions for i.i.d.\hspace{3pt}random variables found in \cite{Br}. In \cref{IID}, we compare our results with the earlier ones. 

Before we mention these results, we define the space $F_k^m$ of functions. Put $$C^{m}(f)=\max_{0\leq j \leq m} \|f^{(j)}\|_{\text{L}^1}\ \ \text{and} \ \ C_k(f)=\max_{0\leq j \leq k} \|x^jf\|_{\text{L}^1}.$$ Define $$C^m_k(f)=C^m(f)+C_k(f).$$ We say $f \in F_k^m$ if $f$ is $m$ times continuously differentiable and $C^m_k(f) < \infty$.

\begin{thm}\label{WLEdgeExp}
Suppose $(A1)$ through $(A4)$ hold with $s=r+2$. 
Choose $q\in \naturals$ such that $q > \frac{r+1}{2r_1}$. Then, for $f\in F_{r+1}^{q+2}$, $S_N$ admits the weak local Edgeworth expansion of order $r$. 
\end{thm}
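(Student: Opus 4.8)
The plan is to exploit the Fourier-analytic identity $\EXP(f(S_N - NA)) = \frac{1}{2\pi}\int \hat f(t) e^{-iNAt} \ell(\cL_t^N v)\, dt$, valid for $f$ integrable with integrable Fourier transform (which is guaranteed since $f \in F_{r+1}^{q+2}$ forces $\hat f$ to decay like $|t|^{-(q+2)}$ and to have $r+1$ derivatives with controlled growth). Rescaling $t \mapsto t/\sqrt N$ is not the right move here — for the \emph{local} expansion one keeps $t$ at its natural scale and splits the $t$-integral into three regimes: (i) $|t| \le \delta$, the central region, where (A1)--(A2) give the perturbed eigenvalue decomposition $\cL_t^N = \lambda(t)^N \Pi_t + Q_t^N$ with $\lambda(t) = e^{iAt - \sigma^2 t^2/2 + \dots}$ smooth to order $s = r+2$ and $\|Q_t^N\| \le C\theta^N$; (ii) $\delta \le |t| \le N^{r_1}$, handled by (A3) on the compact part $\delta \le |t| \le K$ (spectral radius $<1$, so exponential decay) together with (A4) on $K \le |t| \le N^{r_1}$ (polynomial decay $\|\cL_t^N\| \le N^{-r_2}$, with $r_2$ as large as we like by Remark \ref{OnA1-A4}.2); (iii) $|t| > N^{r_1}$, killed by the decay $|\hat f(t)| \le C_{q+2}^{r+1}(f)\,|t|^{-(q+2)}$, whose tail integral is $O(N^{-r_1(q+1)})$, and the choice $q > \frac{r+1}{2r_1}$ makes this $o(N^{-r/2})$ after the factor $\sqrt N$.

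The main term comes from regime (i). There, write $e^{-iNAt}\lambda(t)^N = e^{-N\sigma^2 t^2/2}\exp\big(N\sum_{j\ge 3} c_j (it)^j\big)$ and Taylor-expand the exponential of the higher-order cumulant terms; since $t$ is $O(\delta)$ and the $j$-th term carries $N t^j$, the natural bookkeeping groups contributions by powers of $N^{-1}$ (not $N^{-1/2}$ — this is why the local expansion has $\lfloor r/2\rfloor$ terms indexed by integer powers). Collecting terms one gets $\ell(\Pi_t v)\, e^{-iNAt}\lambda(t)^N \hat f(t) \approx \hat f(t) e^{-N\sigma^2 t^2/2}\sum_{p=0}^{\lfloor r/2\rfloor} N^{-p} R_p(t)$ with $R_p$ polynomial, plus a remainder uniformly $O(N^{-(r+1)/2} |t|^{r+1})$ on $|t|\le\delta$ coming from the order-$s$ Taylor remainder of $\lambda$ and of $\ell(\Pi_t v)$ (here I use $s = r+2$). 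Now \emph{extend} each integral $\int_{|t|\le\delta} \hat f(t) e^{-N\sigma^2 t^2/2} t^k\, dt$ to all of $\reals$ at the cost of a super-polynomially small error (Gaussian tail), and then — crucially — undo the Gaussian: $e^{-N\sigma^2 t^2/2}$ is itself the Fourier transform of a Gaussian density, and one recognizes $\frac{1}{2\pi}\int \hat f(t) e^{-N\sigma^2 t^2/2}(\text{poly in }t)\,dt$ as $\frac{1}{2\pi}\int P_{p,l}(z) f(z)\,dz$ up to the factor $N^{-1/2}$ that Definition \ref{WLEdgeExpDef} carries on the left. Matching powers fixes the polynomials $P_{p,l}$; their independence of $f$ is manifest from the construction.

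I would organize the write-up as: (1) justify the Fourier inversion identity and record the decay bounds on $\hat f$ from $f\in F_{r+1}^{q+2}$; (2) the three-regime split, disposing of (ii) and (iii) by (A3)--(A4) and the $\hat f$-tail bound, checking the exponents against $q > \frac{r+1}{2r_1}$; (3) the perturbation expansion in regime (i), producing the polynomials and the $O(N^{-(r+1)/2})$ remainder; (4) the "re-Gaussianization" step turning Fourier-side integrals into the claimed $\int P_{p,l}(z) f(z)\,dz$, and bounding all errors by $\|f\|\, o(N^{-r/2})$ where $\|f\| = C_{r+1}^{q+2}(f)$. The main obstacle is regime (i): one must carry the perturbative expansion of $\lambda(t)^N$, $\Pi_t$, $\ell$, $v$ simultaneously to the right order, track that the error is genuinely $o(N^{-r/2})$ \emph{uniformly} after multiplying by $\sqrt N$ (so one actually needs the $|t|^{r+1}$-weighted remainder to beat $N^{-(r+1)/2}\cdot N^{1/2} = N^{-r/2}$, which is exactly why $s=r+2$ and the $x^{r+1}f \in L^1$ condition appear), and to make sure the truncation-to-$|t|\le\delta$ and extension-to-$\reals$ errors are both negligible — the former needs (A3), the latter the Gaussian decay. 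Everything else is bookkeeping of the kind already rehearsed for the global expansion.
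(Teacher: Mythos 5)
Your overall architecture matches the paper's: Plancherel, a three-regime split in $t$ with (A3) on the compact part, (A4) up to $|t|\le N^{r_1}$, and the decay of $\wh f$ beyond (your exponent bookkeeping in regimes (ii)--(iii) against $q>\frac{r+1}{2r_1}$ is correct and is exactly what the paper does). The gap is in the extraction of the main term. You claim that one can "recognize" $\frac{1}{2\pi}\int \wh f(t)\,e^{-N\sigma^2 t^2/2}\,(\text{poly in }t)\,dt$ as $\frac{1}{2\pi}\int P_{p,l}(z)f(z)\,dz$ by undoing the Gaussian. That identification is false as stated: the inverse Fourier transform of $e^{-N\sigma^2 t^2/2}$ times a polynomial is a Gaussian-type kernel of width $\sqrt N$ and height $N^{-1/2}$, so this step produces $\int f(z)\,G_{N,p}(z)\,dz$ with an $N$-dependent kernel, not a fixed polynomial against $f$. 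What you have described is in fact the proof of the weak \emph{global} expansion (Theorem 2.3), where the answer is legitimately $\int P_{p,g}(z)\fn(z)f(z\sqrt N)\,dz$ and the Gaussian survives into the statement. The weak \emph{local} expansion requires the answer to be a pure moment functional $\int P_{p,l}(z)f(z)\,dz$, and to get there one further ingredient is indispensable.

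That ingredient is the Taylor expansion of $\wh f$ at $0$ to order $r$, $\wh f(t/\sqrt N)=\sum_{j=0}^{r}\frac{\wh f^{(j)}(0)}{j!}\frac{t^j}{N^{j/2}}+O\bigl(C_{r+1}(f)\,|t|^{r+1}N^{-(r+1)/2}\bigr)$, valid because $x^{r+1}f\in L^1$ bounds $\wh f^{(r+1)}$ uniformly. Substituting $\wh f^{(j)}(0)=\int(-iz)^j f(z)\,dz$ and setting $a_{k,j}=\int_\reals t^jA_k(t)e^{-\sigma^2t^2/2}\,dt$ yields the polynomials $P_{p,l}(z)=\sum_{k+j=2p}\frac{a_{k,j}}{j!}(-iz)^j$; the parity of $A_k$ (it has the parity of $k$) kills all terms with $k+j$ odd, which is the actual mechanism behind the $\lfloor r/2\rfloor$ integer-power structure you asserted but did not derive. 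Without this step your argument cannot produce $f$-independent polynomials nor justify that only integer powers of $N^{-1}$ appear, and your remark that "rescaling $t\mapsto t/\sqrt N$ is not the right move" points in the wrong direction --- the rescaling is precisely what makes the Taylor coefficients of $\wh f$ at $0$ (i.e.\ the moments of $f$) emerge as the coefficients in the expansion. The rest of your outline (the $o(N^{-r/2})$ uniformity after multiplying by $\sqrt N$, the role of $s=r+2$, the Gaussian-tail extension to all of $\reals$) is consistent with the paper once this step is inserted.
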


\begin{thm}\label{WGEdgeExp}
Suppose $(A1)$ through $(A4)$ hold with $s=r+2$. 
Choose $q\in \naturals$ such that $q > \frac{r+1}{2r_1}$. Then, for $f\in F_{0}^{q+2}$, $S_N$ admits the weak global Edgeworth expansion of order $r$. 
\end{thm}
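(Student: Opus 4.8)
The plan is to run the Fourier--analytic (Nagaev--Guivarc'h) argument of \Cref{EdgeExp} in the ``global'' regime $|t|\sim N^{-1/2}$, and to read off the polynomials $P_{p,g}$ from the perturbative expansion of $t\mapsto\ell(\cL_t^Nv)$ near $0$. Since $f\in F_0^{q+2}$ one has $|\hat f(t)|\le C\,C^{q+2}(f)(1+|t|)^{-(q+2)}$, so $\hat f\in L^1\cap L^\infty$ and \eqref{MainAssum} together with Fubini give the exact identity
\[
\EXP\big(f(S_N-NA)\big)=\frac1{2\pi}\int_{\reals}\hat f(t)\,e^{-iANt}\,\ell(\cL_t^Nv)\,dt .
\]
I would split this integral into $|t|\le\eps$, $\ \eps\le|t|\le N^{r_1}$ and $|t|\ge N^{r_1}$, with $\eps$ fixed and small enough to lie in the perturbative regime of (A2). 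On the outermost range the integrand is $\le|\hat f(t)|$ in modulus (the characteristic function has modulus $\le1$), so that piece is $\le C\,C^{q+2}(f)\,N^{-r_1(q+1)}=\|f\|\,o(N^{-(r+1)/2})$ precisely because $q>\tfrac{r+1}{2r_1}$. On the middle range, (A3) gives (by compactness and upper semicontinuity of the spectral radius) $\|\cL_t^N\|\le C\rho^N$ with $\rho<1$ for $\eps\le|t|\le K$, while (A4) --- after boosting $r_2$ above $\tfrac{r+1}{2}$ via \Cref{OnA1-A4}(2), still compatibly with the assumption on $q$ --- gives $\|\cL_t^N\|\le N^{-r_2}$ for $K\le|t|\le N^{r_1}$; since $\hat f\in L^1$ this piece is $\|f\|\,o(N^{-(r+1)/2})$ as well.

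The heart of the matter is the range $|t|\le\eps$. By (A1)--(A2) and $C^{s}$ perturbation theory ($s=r+2$), on $|t|\le\eps$ one has $\cL_t=\lambda_t\Pi_t+Q_t$ with $\lambda_\bullet,\Pi_\bullet,Q_\bullet$ of class $C^{s}$, $\sup_{|t|\le\eps}\rho(Q_t)<1$, $\lambda_0=1$ and $\Pi_t$ of rank one; hence $\ell(\cL_t^Nv)=\lambda_t^N\mu_t+\ell(Q_t^Nv)$, where $\mu_t:=\ell(\Pi_tv)$ is $C^{s}$ with $\mu_0=1$, and $|\ell(Q_t^Nv)|\le C\theta^N$ contributes only an exponentially small integral. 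Writing $\lambda_t=e^{\Lambda(t)}$ with $\Lambda\in C^{s}$, $\Lambda(0)=0$, $\Lambda'(0)=iA$, $\Lambda''(0)=-\sigma^2$ (the last two forced by the CLT of \Cref{OnA1-A4}(3)), and substituting $t=\xi/\sqrt N$, the main term becomes
\[
\frac1{2\pi\sqrt N}\int_{|\xi|\le\eps\sqrt N}\hat f\!\Big(\tfrac{\xi}{\sqrt N}\Big)\,
\exp\!\Big(N\Lambda(\xi/\sqrt N)-iA\sqrt N\,\xi\Big)\,\mu_{\xi/\sqrt N}\,d\xi .
\]
Taylor expanding at $0$ gives $N\Lambda(\xi/\sqrt N)-iA\sqrt N\,\xi=-\tfrac{\sigma^2\xi^2}{2}+\sum_{j=3}^{s}a_j\xi^jN^{-(j-2)/2}+R_N(\xi)$ with $|R_N(\xi)|\le\omega(|\xi|/\sqrt N)\,N^{-r/2}|\xi|^{s}$, $\omega(u)\to0$, and similarly $\mu_{\xi/\sqrt N}=\sum_{j=0}^{s}b_j\xi^jN^{-j/2}+\widetilde R_N(\xi)$; expanding the exponential and multiplying out I would record the product as $e^{-\sigma^2\xi^2/2}\big(\sum_{p=0}^{r}\Theta_p(\xi)\,N^{-p/2}+\mathcal E_N(\xi)\big)$, where each $\Theta_p$ is an explicit polynomial in $\xi$ built only from $a_3,\dots,a_{p+2},b_1,\dots,b_p$ --- hence independent of $f$ --- and $\mathcal E_N$ collects $R_N,\widetilde R_N$ and all genuine terms of order $N^{-(r+1)/2}$ and smaller.

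Next I would define $P_{p,g}$ to be the unique polynomial with $\widehat{P_{p,g}\,\fn}(s)=\Theta_p(-s)\,e^{-\sigma^2s^2/2}$ (a Hermite-type polynomial, manifestly $f$-independent, with $P_{0,g}\equiv1$); a Parseval computation then yields
\[
\int P_{p,g}(z)\,\fn(z)\,f(z\sqrt N)\,dz=\frac1{2\pi\sqrt N}\int_{\reals}\hat f\!\Big(\tfrac{\xi}{\sqrt N}\Big)\,\Theta_p(\xi)\,e^{-\sigma^2\xi^2/2}\,d\xi ,
\]
so that the target sum $\sum_{p=0}^r N^{-p/2}\int P_{p,g}\fn f(\cdot\sqrt N)$ is exactly the ``expanded'' version of the main term, up to (i) completing the $\xi$-integral from $|\xi|\le\eps\sqrt N$ to all of $\reals$, which costs $O(e^{-c\eps^2N})$ thanks to the Gaussian factor, and (ii) the error $\tfrac1{2\pi\sqrt N}\int\hat f(\xi/\sqrt N)\,e^{-\sigma^2\xi^2/2}\,\mathcal E_N(\xi)\,d\xi$. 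Since $\|\hat f\|_\infty\le\|f\|_{L^1}\le\|f\|$, bounding (ii) by $\|f\|\,o(N^{-(r+1)/2})$ reduces to $\int e^{-\sigma^2\xi^2/2}|\mathcal E_N(\xi)|\,d\xi=o(N^{-r/2})$. The exponential expansion is legitimate only where its argument $\sum_{j\ge3}a_j\xi^jN^{-(j-2)/2}$ is small, i.e. for $|\xi|\le N^{\beta}$ with $\beta>0$ small; there $\omega(|\xi|/\sqrt N)\to0$ uniformly, giving $o(N^{-r/2})$ from $R_N,\widetilde R_N$ (via $\omega(\cdot)N^{-r/2}\int|\xi|^{s}e^{-\sigma^2\xi^2/2}$) and $O(N^{-(r+1)/2})$ from the genuine polynomial part. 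On $N^{\beta}\le|\xi|\le\eps\sqrt N$ I would instead use $\operatorname{Re}\Lambda(u)\le-c|u|^2$ for $|u|\le\eps$, so that both the true rescaled integrand and its surrogate $e^{-\sigma^2\xi^2/2}\sum_pN^{-p/2}\Theta_p(\xi)$ are there bounded by $\|f\|\,e^{-cN^{2\beta}}=\|f\|\,o(N^{-M})$ for every $M$. Summing the four contributions gives the claimed order $r$ weak global expansion.

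I expect the main obstacle to be exactly this last error analysis: organizing $\mathcal E_N$ so that the $f$-dependence stays confined to the single factor $\|f\|=C^{q+2}_0(f)$, and genuinely extracting $o(N^{-r/2})$ (rather than merely $O(N^{-r/2})$) from the non-analytic Taylor remainders of $\Lambda$ and $\mu$ near the edge $|\xi|\sim\sqrt N$ --- which is where the interplay between the Gaussian decay $e^{-c\xi^2}$ and the polynomial prefactors must be used carefully. Much of this bookkeeping, together with the perturbative expansion of $\ell(\cL_t^Nv)$ near $0$, is shared verbatim with the proof of \Cref{EdgeExp} and can be quoted; the genuinely new points here are only the passage to the global scaling $t=\xi/\sqrt N$ and the identification of the $P_{p,g}$ through the Fourier transform of $P_{p,g}\fn$.
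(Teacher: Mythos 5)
Your proposal is correct and follows essentially the same route as the paper: Fourier inversion of $\EXP(f(S_N-NA))$ via \eqref{MainAssum}, the three-way split of the frequency axis (perturbative regime near $0$ handled by the eigenvalue expansion \eqref{PolyComp}/\eqref{FreqAsympPoly}, middle range by (A3)--(A4) with $r_2$ boosted via \Cref{OnA1-A4}, far range by the decay $|\hat f(t)|\lesssim C^{q+2}(f)(1+|t|)^{-(q+2)}$ and $q>\frac{r+1}{2r_1}$), and identification of $P_{p,g}$ as the Hermite-type polynomials with $\widehat{P_{p,g}\fn}$ equal to the expansion coefficients, exactly as in \eqref{PolyForDensity} and the Plancherel step of \eqref{GlobalNear0Asymp}. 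The only cosmetic differences are that the paper cuts the inner region at $\sqrt{D\log n}$ rather than $N^{\beta}$ and reuses the estimates already established in the proof of \Cref{WLEdgeExp}.
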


In \Cref{WLEdgeExp} and \Cref{WGEdgeExp}, $f$ is required to have at least three derivatives in order to guarantee the integrability of Fourier transforms of $f$ and its derivatives. In addition to (A1) through (A4), if we have, 
\begin{itemize} \setlength\itemsep{9pt}
\item[(A5)] There exists $\alpha>0$ and $N_1$ such that $\|\cL^N_t\| \leq  \frac{C}{t^\alpha}$ for $|t| > N^{r_1}$ for $N>N_1$. 
\end{itemize}
then we can improve this assumption to $f$ having only one continuous derivative. 

\begin{customthm}{2.2*}\label{NewWLEdgeExp}
Suppose $(A1)$ through $(A5)$ hold with $s=r+2$ and $\alpha > \frac{r+1}{2r_1}$ for sufficiently large $N$. Then, for $f\in F_{r+1}^{1}$, $S_N$ admits the weak local Edgeworth expansion of order $r$. 
\end{customthm}

\begin{customthm}{2.3*}\label{NewWGEdgeExp}
Suppose $(A1)$ through $(A5)$ hold with $s=r+2$ and $\alpha > \frac{r+1}{2r_1}$ for sufficiently large $N$. Then, for $f\in F_{0}^{1}$, $S_N$ admits the weak global Edgeworth expansion of order $r$. 
\end{customthm}

The proofs of these theorems are minor modifications of the proofs of the previous two theorems. This is described in \cref{AltPrfWEdge}. 

The next theorem gives sufficient conditions for the existence of the averaged Edgeworth expansion. 

\begin{thm}\label{AVGEdgeExp}
Suppose $(A1)$ through $(A4)$ hold with $s=r+2$. Choose $q \in \naturals$ such that 
$q> \frac{r}{2r_1}$. Then, $S_N$ admits the averaged Edgeworth expansion of order $r$ for $f\in F^{q}_0$. 
\end{thm}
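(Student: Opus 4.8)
The plan is to run the Fourier/spectral argument used for \cref{EdgeExp,WGEdgeExp}, the new feature being that averaging against $f$ over a window of width $1/\sqrt N$ inserts a rapidly decaying factor in frequency which, together with the extra $1/(i\xi)$ coming from working with a distribution function rather than a density, is what lets one get away with the weak hypothesis $q>r/(2r_1)$. Concretely: let $W_N=(S_N-NA)/\sqrt N$ with law $\mu_N$, put $\Psi_N(w)=\Prob(W_N\le w)-\fN(w)$ and $\phi_N(s)=\sqrt N f(-\sqrt N s)$, so $\|\phi_N\|_{L^1}=\|f\|_{L^1}$ and, since $\widehat{\phi_N}(\xi)$ is a rescaling of $\hat f$, the hypothesis $f\in F_0^q$ gives $|\widehat{\phi_N}(\xi)|\le\|f\|\min\{1,(|\xi|/\sqrt N)^{-q}\}$. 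A change of variables shows the left side of the averaged expansion equals $G_N(z):=(\Psi_N*\phi_N)(z)$. As $\Psi_N$ is bounded with $\Psi_N(\pm\infty)=0$ and $\Psi_N'=\mu_N-\gamma$ (with $\gamma=\cN(0,\sigma^2)$), one has $\widehat{\Psi_N}(\xi)=(\widehat{\mu_N}(\xi)-e^{-\sigma^2\xi^2/2})/(i\xi)$, a bounded function because the numerator vanishes at $0$; hence $\widehat{G_N}(\xi)=\frac{\widehat{\mu_N}(\xi)-e^{-\sigma^2\xi^2/2}}{i\xi}\widehat{\phi_N}(\xi)\in L^1(\reals)$ (decay $|\xi|^{-q-1}$ at infinity) and $G_N(z)=\frac1{2\pi}\int_\reals\widehat{G_N}(\xi)e^{iz\xi}\,d\xi$ by Fourier inversion; finally $\widehat{\mu_N}(\xi)=e^{-iA\sqrt N\xi}\ell(\cL_{\xi/\sqrt N}^Nv)$ by \eqref{MainAssum}.

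Using $(A1)$–$(A3)$ with $s=r+2$ and spectral-gap perturbation theory (as in \cref{OnA1-A4}\,(3) and \cite[Theorem 2.4]{G}), for $|t|\le\delta$ write $\cL_t=\lambda(t)\Pi_t+Q_t$ with $\lambda$ and $t\mapsto\ell(\Pi_tv)$ of class $C^{r+2}$, $\lambda(0)=1$, $\ell(\Pi_0v)=1$, $\ell(Q_0^Nv)=0$, and $\|Q_t^N\|\le C\theta^N$ uniformly with $\theta<1$. Taylor-expanding $N\log\lambda(\xi/\sqrt N)$ and $\ell(\Pi_{\xi/\sqrt N}v)$ and collecting powers of $N^{-1/2}$ gives, for $|\xi|\le N^\beta$ with a fixed small $\beta\in(0,1/6)$,
\[
\widehat{\mu_N}(\xi)=e^{-\sigma^2\xi^2/2}\Bigl(1+\sum_{p=1}^{r}N^{-p/2}R_p(i\xi)\Bigr)+e^{-\sigma^2\xi^2/2}\,o(N^{-r/2})\,\mathrm{poly}(i\xi)+O(\theta^N),
\]
the $R_p$ being polynomials with $R_p(0)=0$. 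Define $P_{p,a}=P_p$ by $\widehat{P_p\fn}(\xi)=R_p(i\xi)e^{-\sigma^2\xi^2/2}/(i\xi)$; since $R_p(0)=0$ the right side is a polynomial in $i\xi$ times $e^{-\sigma^2\xi^2/2}$, so $P_p$ is a polynomial (the classical Edgeworth polynomials), and one checks $((P_p\fn)*\phi_N)(z)=\int P_p(z+\tfrac{y}{\sqrt N})\fn(z+\tfrac{y}{\sqrt N})f(y)\,dy$.

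I would then split $\int\widehat{G_N}e^{iz\xi}\,d\xi$ over $|\xi|\le N^\beta$, $N^\beta\le|\xi|\le\delta'\sqrt N$, $\delta'\sqrt N\le|\xi|\le N^{r_1+1/2}$, and $|\xi|\ge N^{r_1+1/2}$. On the first range, inserting the expansion and extending the integrals to $\reals$ (error $O(e^{-cN^{2\beta}})\|f\|$), the main sum reproduces $\sum_{p=1}^r N^{-p/2}\int P_p(z+\tfrac{y}{\sqrt N})\fn(z+\tfrac{y}{\sqrt N})f(y)\,dy$, while the remainder is $\|f\|\,o(N^{-r/2})$ — the $1/\xi$ being harmless since the numerator vanishes at $0$ and $e^{-\sigma^2\xi^2/2}$ absorbs polynomial growth, and the $O(\theta^N)$ term contributing $O(\sqrt N\theta^N)\|f\|$. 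On the transition range, $|\lambda(t)|\le e^{-\sigma^2t^2/4}$ for small $t$ gives $|\widehat{\mu_N}(\xi)|\le Ce^{-cN^{2\beta}}+C\theta^N$. On the intermediate range, $(A3)$ with compactness for $|t|\in[\delta',K]$ and $(A4)$ for $|t|\in[K,N^{r_1}]$ give $|\widehat{\mu_N}(\xi)|\le CN^{-r_2}$; by \cref{OnA1-A4}\,(2), after a slight decrease of $r_1$ that, by strictness of $q>r/(2r_1)$, preserves that inequality, we may take $r_2>r/2$, so this range contributes $O(N^{-r_2}\log N)\|f\|$. On the tail, only $|\widehat{\mu_N}|\le1$ is available, so $|\widehat{G_N}(\xi)|\le2\|f\|N^{q/2}|\xi|^{-q-1}$ and the range contributes $O(N^{-qr_1})\|f\|=\|f\|\,o(N^{-r/2})$ \emph{precisely} because $q>r/(2r_1)$. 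Since $|e^{iz\xi}|=1$ all bounds are uniform in $z$; summing the four pieces gives the averaged expansion with $P_{p,a}=P_p$.

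The main obstacle is the tail range $|t|\ge N^{r_1}$: there neither $(A4)$ nor $(A5)$ applies, so the estimate hinges entirely on the decay $|\widehat{\phi_N}(\xi)|\lesssim\|f\|(|\xi|/\sqrt N)^{-q}$ provided by $f\in F_0^q$ together with the single extra power of $1/\xi$ gained by integrating the density to a distribution function; balancing these is exactly what forces $q>r/(2r_1)$, and keeping the exponents straight is the one delicate point. Everything else is the Edgeworth machinery already developed for \cref{EdgeExp,O1Exp,WGEdgeExp}.
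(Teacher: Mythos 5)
Your proposal is correct and follows essentially the same route as the paper: rewrite the averaged quantity as a convolution of the (distribution-function) discrepancy with a rescaled copy of $f$, pass to the Fourier side where the extra factor $1/(i\xi)$ appears, extract the Edgeworth polynomials from the spectral expansion of $\ell(\cL^N_{\xi/\sqrt N}v)$ near $\xi=0$, and split the remaining frequency range into the compactness/(A3) zone, the (A4) zone with $r_2$ large, and the tail $|t|>N^{r_1}$ where only the decay $|\hat f(t)|\lesssim |t|^{-q}$ from $f\in F^q_0$ is available — exactly the balance that forces $q>r/(2r_1)$. The only differences are organizational (you subtract just the Gaussian and pull the polynomials out of the low-frequency expansion, while the paper subtracts the full Edgeworth approximant first and shows the remainder is $o(n^{-(r+1)/2})$), and both justifications of the Fourier inversion step are standard.
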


We note that for integer valued random variable assumptions (A3) and (A4) cannot hold since the characteristic function of $S_N$ is $2\pi$-periodic. Therefore we replace (A3) by,
\begin{itemize}
\item[$\widetilde{(\text{A3})}$] When $t\not\in2\pi \integers$, sp$(\cL_t)\subset \{|z|<1\}$ and when $t \in 2\pi \integers$, sp$(\cL_t)\subset \{|z|< 1\} \cup \{1\}$. 
\end{itemize}
Also, because of periodicity of the characteristic function, an assumption similar to (A4) is not required. 

The following theorem provides conditions for the existence of asymptotic expansions for the LCLT for weakly dependent integer valued random variables. A similar result for $X_n$'s that are $\integers^d$-valued, is obtained in \cite{PN}. Compare with Proposition 4.2 and 4.4 therein.  

\begin{thm}\label{LatticeEdgeExp}
Suppose $X_n$ are integer valued, $(A1),(A2)$ and $\widetilde{(A3)}$ are satisfied with $s=r+2$. Then $S_N$ admits the order $r$ lattice Edgeworth expansion.  
\end{thm}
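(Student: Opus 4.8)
The plan is to follow the same spectral strategy used for the continuous case (\Cref{EdgeExp}), but to exploit the $2\pi$-periodicity of $t\mapsto\EXP(e^{itS_N})$ so that only the behaviour of $\cL_t$ near the points of $2\pi\integers$ matters, and no analogue of (A4) is needed. By Fourier inversion on the circle, $\Prob(S_N=k)=\frac{1}{2\pi}\int_{-\pi}^{\pi} e^{-itk}\,\ell(\cL_t^N v)\,dt$. First I would use (A2) together with the continuity/differentiability from (A1) and analytic perturbation theory to write, for $|t|\le\delta$, a spectral decomposition $\cL_t^N=\lambda(t)^N \Pi_t + Q_t^N$, where $\lambda(t)$ is the leading eigenvalue, $\lambda(0)=1$, $\Pi_t$ the associated rank-one projection, and $\|Q_t^N\|\le C\rho^N$ for some $\rho<1$ uniformly in a neighbourhood of $0$; moreover $\lambda(t)=e^{iAt-\sigma^2 t^2/2+\sum_{j\ge 3} c_j (it)^j/j! + o(|t|^{s})}$, so that $\lambda(t)^N$ behaves like a Gaussian characteristic function times a polynomial correction once one substitutes $t\mapsto t/\sqrt N$. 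By $\widetilde{(A3)}$ and compactness, on the complementary region $\delta\le|t|\le\pi$ the spectral radius of $\cL_t$ is bounded by some $\theta<1$, hence $\|\cL_t^N\|\le C\theta^N$ there, contributing only an exponentially small (hence $o(N^{-r/2})$, even after multiplying by $\sqrt N$) term to the integral.

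Next I would expand $\ell(\lambda(t)^N\Pi_t v)$ in powers of $N^{-1/2}$: writing $\phi_N(t):=\ell(\cL_t^N v)$, on $|t|\le\delta$ one has $\phi_N(t/\sqrt N)=e^{iAt\sqrt N}e^{-\sigma^2 t^2/2}\big(1+\sum_{p=1}^{r}N^{-p/2}\pi_p(t)+o(N^{-r/2})\big)$ uniformly for $|t|\le\delta\sqrt N$, where each $\pi_p$ is a polynomial in $t$ (the $o$-bound here uses $s=r+2$, which gives control of the $(r+2)$-th derivative and hence a Taylor remainder of the right order; the truncation $|t|\le\delta\sqrt N$ rather than all of $[-\pi\sqrt N,\pi\sqrt N]$ is harmless precisely because of the previous paragraph). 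Substituting into the inversion formula, changing variables $t\mapsto t/\sqrt N$, and recognising $\frac{1}{2\pi}\int e^{-it\xi}e^{-\sigma^2 t^2/2}(it)^m\,dt$ as a Hermite-type derivative of the Gaussian density $\fn$, one obtains
$$\sqrt N\,\Prob(S_N=k)=\fn\!\left(\frac{k-NA}{\sqrt N}\right)\sum_{p=0}^{r}\frac{P_{p,d}\big((k-NA)/\sqrt N\big)}{N^{p/2}}+o(N^{-r/2}),$$
with $P_{p,d}$ obtained from $\pi_p$; the error is uniform in $k\in\integers$ because, after the change of variables, the integrand is dominated by an $N$-independent integrable function (Gaussian times polynomial) on $|t|\le\delta\sqrt N$ and the tails are exponentially small.

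The main obstacle I expect is making the error term genuinely $o(N^{-r/2})$ \emph{uniformly in $k$} while handling the region $\delta\le|t|\le\pi$: unlike the continuous case, here one does not discard a tail $|t|>N^{r_1}$ but instead must cover the whole of $[-\pi,\pi]\setminus[-\delta,\delta]$ using only $\widetilde{(A3)}$ and continuity of $t\mapsto\cL_t$, which I would do by a compactness argument giving a uniform spectral-radius bound $\theta<1$ and then Gelfand's formula (or a resolvent estimate) to pass from spectral radius to a bound on $\|\cL_t^N\|$ that is uniform in $t$ on that compact set. A secondary technical point is the interchange of the $\ell(\cdot)$ functional with the Taylor expansion of the eigenprojection $\Pi_t$ and eigenvalue $\lambda(t)$ to the required order $s=r+2$; this is routine analytic perturbation theory but must be done carefully to see that each coefficient $\pi_p(t)$ is a genuine polynomial (not merely a smooth function) in $t$, which is what forces the $P_{p,d}$ to be polynomials. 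Uniqueness of the resulting expansion was already observed in the remark following \Cref{LatticeEdgeExpDef}.
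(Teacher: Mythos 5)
Your plan is correct and follows essentially the same route as the paper's proof: Fourier inversion on $[-\pi,\pi]$, the perturbative decomposition $\cL_t^N=\mu(t)^N\Pi_t+\Lambda_t^N$ near $t=0$ with the expansion of $\log\mu$ producing the polynomials via Hermite-type derivatives of the Gaussian, an exponentially small contribution from $\delta\le|t|\le\pi$ obtained from $\widetilde{(\text{A3})}$ together with continuity and compactness, and an exponentially small Gaussian tail. The points you flag as potential obstacles (the uniform bound on $\|\cL_t^N\|$ over the compact set $\delta\le|t|\le\pi$ and the polynomial nature of the coefficients) are handled in the paper exactly as you propose.
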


\section{Proofs of the main results}\label{proofs}
Here we prove the main results of the paper. From now on we work in the setting described in \cref{results}. 
\begin{proof}[Proof of Theorem \ref{EdgeExp}]  
We seek polynomials $P_p(x)$ with real coefficients such that
\begin{equation}\label{EdgeExpP}
\Prob\left(\frac{S_n-nA}{\sqrt{n}}\leq x\right)-\fN(x)=\sum_{p=1}^r \frac{P_p(x)}{n^{p/2}} \fn(x)+
o\left(n^{-r/2}\right).
\end{equation} 
Once we have found suitable candidates for $P_p(x)$ we can apply the Berry-Esseen inequality,
\begin{equation}\label{eq:basic}
\left|F_{n}(x)-\cE_{r,N}(x)\right|\le \frac 1\pi\int_{-T}^{T}\left|\frac{\widehat F_{n}(t)-\widehat \cE_{r,n}(t)}{t}\right|dt+\frac{C_0}{T},
\end{equation}
where $$F_n(x)= \Prob\left(\frac{S_n-nA}{\sqrt{n}}\leq x\right), \ \ \ \cE_{r,n}(x)=\fN(x)+\sum_{p=1}^r \frac{P_p(x)}{n^{p/2}} \fn(x),$$ and $C_0$ is independent of $T$. We refer the reader to \cite[Chapter XVI.3]{Feller2} for a proof of \eqref{eq:basic}. What follows is a formal derivation of $P_p(x)$. Later, we will use \eqref{eq:basic} along with other estimates to prove \eqref{EdgeExpP}. 

It follows from (A1), (A2) and classical perturbation theory (see \cite[IV.3.6 and VII.1.8]{Kato}) that there exist $\delta>0$ such that for $|t|\leq\delta$, $\cL_t$ has a top eigenvalue $\mu(t)$ which is simple and the remainder of the spectrum is contained in a strictly smaller disk.
One can express $\cL_t$ as 
\begin{equation}\label{OpDecom}
\cL_{t}=\mu(t)\Pi_{t}+\Lambda_t
\end{equation}
where $\Pi_t$ is the eigenprojection to the top eigenspace of $\cL_t$ and $\Lambda_t=(I-\Pi_t)\cL_t$. Because $\Lambda_t\Pi_t = \Pi_t\Lambda_t = 0$, iterating \eqref{OpDecom}, we obtain $$\cL^n_{t}=\mu^n(t)\Pi_{t}+\Lambda^n_t.$$ 
Using (A3) and compactness, there exist $C$ (which does not depend on $n$ and $t$) and $0<r<1$ such that $\|\Lambda^n_t\|\leq Cr^n$ for all $|t|\leq\delta$. By \eqref{MainAssum}, 
\begin{equation}\label{eq:char fn}
\EXP(e^{it S_n/\sqrt{n}})=\mu\Big(\frac{t}{\sqrt{n}}\Big)^n \ell\big( \Pi_{t/\sqrt{n}} v \big) + \ell\big(\Lambda^n_{t/\sqrt{n}} v\big).
\end{equation}
Now, we focus on the first term of \eqref{eq:char fn}. Put
\begin{equation}\label{av proj}
Z(t)=\ell( \Pi_{t} v).
\end{equation}
Then, substituting $t=0$ in \eqref{eq:char fn} yields $1=Z(0)+\ell(\Lambda^n_0 v)$. Also, we know that $\lim_{n \to \infty} \|\Lambda_0^n v\|=0$. This gives $\lim_{n \to \infty} \ell(\Lambda^n_0 v) =0$. Therefore, $Z(0)=1$ and $Z(t) \neq 0$ when $|t|<\delta$. Also, this shows that $\ell(\Lambda^n_0 v)=0$ for all $n$. Next, note that $t \mapsto \mu(t)$ and $t \mapsto \Pi_t$ are $r+2$ times continuously differentiable on $|t|<\delta$ (see \cite[IV.3.6 and VII.1.8]{Kato}). Therefore, $Z(t)$ is $r+2$ times continuously differentiable on $|t|<\delta$. 

Now we are in a position to compute $P_p(x)$. To this end we make use of ideas in \cite[Chapter XVI]{Feller2} (where the Edgeworth expansions for i.i.d.\hspace{3pt}random variables are constructed) and \cite{G} (where the CLT is proved using Nagaev-Guivarc'h method). 

Consider the function $\psi$ such that,
\begin{align*}
\log \mu\Big(\frac{t}{\sqrt{n}}\Big) = \frac{iAt}{\sqrt{n}}-\frac{\sigma^2t^2}{2n} + \psi\Big(\frac{t}{\sqrt{n}}\Big) \iff \mu^n\Big(\frac{t}{\sqrt{n}}\Big)=e^{\frac{inAt}{\sqrt{n}}-\frac{\sigma^2t^2}{2}}\exp \Big( n\psi\Big(\frac{t}{\sqrt{n}}\Big)\Big).
\end{align*}
where $A=\underset{n \to \infty}{\lim} \EXP\big(\frac{S_n}{n}\big)$ is the asymptotic mean and $\sigma^2=\underset{n \to \infty}{\lim} \EXP\big(\big[\frac{S_n-nA}{\sqrt{n}}\big]^2\big)$ is the asymptotic variance. (For details see \cref{Coeff}.)

By \eqref{eq:char fn} we have,
\begin{equation}\label{CharFn1}
\EXP(e^{it \frac{S_n-nA}{\sqrt{n}}})=e^{-\frac{\sigma^2t^2}{2}} \exp \Big( n\psi\Big(\frac{t}{\sqrt{n}}\Big) \Big)Z\Big(\frac{t}{\sqrt{n}}\Big) + e^{-\frac{inAt}{\sqrt{n}}}\ell\Big(\Lambda^n_{\frac{t}{\sqrt{n}}} v\Big)
\end{equation}

Notice that $\psi(0)=\psi'(0)=0$ and $\psi(t)$ is $r+2$ times continuously differentiable. Now, denote by $t^2\psi_r(t)$ the order $(r+2)$ Taylor approximation of $\psi$. Then, $\psi_r$ is the unique polynomial such that $\psi(t)=t^2\psi_r(t)+o(|t|^{r+2})$. Also, $\psi_r(0)=0$ and $\psi_r$ is a polynomial of degree $r$. In fact, we can write $\psi(t)=t^2\psi_r(t)+t^{r+2}\tilde{\psi}_r(t)$ where $\tilde{\psi}_r$ is continuous and $\tilde{\psi}_r(0)=0$. Thus,
\begin{align*}
\exp \Big( n\psi\left(\frac{t}{\sqrt{n}}\Big) \right)=\exp\Big(t^2 \psi_r\Big(\frac{t}{\sqrt{n}}\Big)+\frac{1}{n^{r/2}}t^{r+2}\tilde{\psi}_r\Big(\frac{t}{\sqrt{n}}\Big)\Big).
\end{align*}
Denote by $Z_r(t)$ the order$-r$ Taylor expansion of $Z(t)-1$. Then, $Z_r(0)=0$ and $Z(t)=1+Z_r(t)+t^{r}\tilde{Z}_r(t)$ with twice continuously differentiable $\tilde{Z}_r(t)$ such that $\tilde{Z}_r(0)=0$. Then, to make the order $n^{-j/2}$ terms explicit, we compute:
\begin{align}\label{PolyComp}
e^{\frac{\sigma^2t^2}{2}}&\mu^n\Big(\frac{t}{\sqrt{n}}\Big)Z\Big(\frac{t}{\sqrt{n}}\Big) \nonumber \\ &= e^{\frac{\sigma^2t^2}{2}}\mu^n\Big(\frac{t}{\sqrt{n}}\Big)\exp \log Z\Big(\frac{t}{\sqrt{n}}\Big) \nonumber \\ &=\exp\Big(t^2 \psi_r\Big(\frac{t}{\sqrt{n}}\Big)+\frac{1}{n^{r/2}}t^{r+2}\tilde{\psi}_r\Big(\frac{t}{\sqrt{n}}\Big) \nonumber \\ &\phantom{=\exp\Big(t^2 \psi_r\Big(\frac{t}{\sqrt{n}}\Big)+\frac{1}{n^{r/2}}t^{r+2}}-\sum_{k=1}^{r}\frac{(-1)^{k+1}}{k}\Big[Z_r\Big(\frac{t}{\sqrt{n}}\Big)\Big]^k -\frac{1}{n^{r/2}}t^r\overline{Z}_r\Big(\frac{t}{\sqrt{n}}\Big)\Big)  \nonumber \\&=1+ \sum_{m=1}^r \frac{1}{m!} \Big[t^2 \psi_r\Big(\frac{t}{\sqrt{n}}\Big)-\sum_{k=1}^{r}\frac{(-1)^{k+1}}{k}\Big[Z_r\Big(\frac{t}{\sqrt{n}}\Big)\Big]^k\Big]^m  \nonumber \\ \nonumber &\phantom{=\exp\Big(t^2 \psi_r\Big(\frac{t}{\sqrt{n}}\Big)++} + \frac{1}{n^{r/2}}t^{r+2}\tilde{\psi}_r\Big(\frac{t}{\sqrt{n}}\Big) -\frac{1}{n^{r/2}}t^r\overline{Z}_r\Big(\frac{t}{\sqrt{n}}\Big)+t^{r+1}\cO\big(n^{-\frac{r+1}{2}}\big)  \nonumber \\ &= \sum_{k=0}^{r} \frac{A_k(t)}{n^{k/2}}+\frac{t^{r}}{n^{r/2}}\varphi\Big(\frac{t}{\sqrt{n}}\Big)+t^{r+1}\cO\big(n^{-\frac{r+1}{2}}\big)
\end{align}
where $A_0 \equiv 1$, $\varphi(t)=t^2\tilde{\psi}_r(t)-\overline{Z}_r(t)$ is continuous and $\varphi(0)=0$. Here $\overline{Z}_r$ is the remainder of $\log Z(t)$ when approximated by powers of $Z_r$.  Next write,
\begin{align}\label{MainPolyExp}
Q_n(t)=\sum_{k=1}^{r} \frac{A_k(t)}{n^{k/2}}.
\end{align}
Notice that 
\begin{equation}\label{Parity}
A_k\ \text{and}\ k\ \text{have the same parity.} 
\end{equation}
This can be seen directly from the construction, because we collect terms with the same power of $n^{-1/2}$, $\psi_r$ and $Z_r$ are a polynomial in $\frac{t}{\sqrt{n}}$ with no constant term and we take powers of $t^2\psi_r(t)$ and $Z_r(t)$, the resulting $A_k$ will contain terms of the form $c_st^{2s+k}$. 

We claim that,
\begin{align}\label{Near0Est}
\int_{|t|<\delta \sqrt{n}}&\bigg|\frac{\mu^n\big(\frac{t}{\sqrt{n}}\big)Z\big(\frac{t}{\sqrt{n}}\big)-e^{-\frac{t^2\sigma^2}{2}} - e^{-\frac{t^2\sigma^2}{2}}Q_n(t)}{t} \bigg|\, dt \\ &= \int_{|t|<\delta \sqrt{n}}e^{-\frac{t^2\sigma^2}{2}}\bigg|\frac{\exp\big[ n \psi\big(\frac{t}{\sqrt{n}}\big)+\log Z\big(\frac{t}{\sqrt{n}}\big)\big]-1 - Q_n(t)}{t} \bigg|\, dt \nonumber  \\  &= o\left(n^{-r/2}\right).  \nonumber 
\end{align}
We note that from the choice of $Q_n$,
\begin{align*}
\frac{\exp\left[ n \psi\big(\frac{t}{\sqrt{n}}\big)+\log Z\big(\frac{t}{\sqrt{n}}\big)\right]- 1 - Q_n(t)}{t} = \frac{1}{n^{r/2}}\Big(t^{r-1}\varphi\Big(\frac{t}{\sqrt{n}}\Big)+t^r\cO\big(n^{-\frac{r+1}{2}}\big) \Big)
\end{align*}
where $\varphi(t)=o(1)$ as $t \to 0$. As a result, for all $\ve>0$ the integrand of \eqref{Near0Est} can be made smaller than $\frac{\ve}{n^{r/2}}(t^{r-1}+t^r)e^{-\frac{t^2\sigma^2}{2}}$  by choosing $\delta$ small enough. This proves the claim. 

Even though the following derivation is only valid for $|t|<\delta \sqrt{n}$, once the polynomial function $Q_n(t)$ is obtained as above, we can consider it to be defined for all $t \in \reals$.

Suppose $|t|\leq \delta$. From classical perturbation theory (see \cite[Chapter IV]{Kato}  and \cite[Section 7]{HP}) we have 
\begin{equation}\label{ResidualProj}
\Lambda^n_t = \frac{1}{2\pi i} \int_{\Gamma} z^n(z-\cL_t)^{-1} \, dz
\end{equation}
where $\Gamma$ is the positively oriented circle centered at $z=0$ with radius $\ve_0$. Here $\ve_0$ is uniform in $t$ and $0<\ve_0<1$. 
Now, \begin{align*}
\Lambda^n_t - \Lambda^n_0 &= \frac{1}{2\pi i} \int_{\Gamma} z^n[(z-\cL_t)^{-1}-(z-\cL_t)^{-1}] \, dz \\ &=\frac{1}{2\pi i} \int_{\Gamma} z^n[(z-\cL_0)^{-1}(\cL_t-\cL_0)(z-\cL_t)^{-1}] \, dz.
\end{align*}
Because $\cL_t-\cL_0 = \cO(|t|)$ we have that $\frac{\Lambda^n_t - \Lambda^n_0}{|t|} = \cO(\ve_0^n)$. $\ell \in \Ban'$ and $\ell(\Lambda^n_0v) = 0$ implies that
\begin{align*}
\int_{|t|<\delta \sqrt{n}}\bigg|\frac{e^{-\frac{inAt}{\sqrt{n}}}\ell(\Lambda^n_{t/\sqrt{n}} v)}{t}\bigg| \, dt &=\int_{|t|<\delta \sqrt{n}}\bigg|\frac{e^{-\frac{inAt}{\sqrt{n}}}\ell(\Lambda^n_{t/\sqrt{n}} v-\Lambda^n_0v)}{t}\bigg| \, dt  \\ &\leq  C\int_{|t|<\delta}\left|\frac{\Lambda^n_{t} - \Lambda^n_0}{t}\right| \, dt = \cO(\ve_0^n).
\end{align*}
This decays exponentially fast to $0$ as $n \to \infty$. This allows us to control the second term in the RHS of \eqref{eq:char fn}. Combining this with \eqref{Near0Est} we can conclude that, 
\begin{equation}\label{FullNear0Est}
\int_{|t|<\delta \sqrt{n}} \left| \frac{\EXP(e^{it\frac{S_n-nA}{\sqrt{n}}})-e^{-\frac{t^2\sigma^2}{2}} - e^{-\frac{t^2\sigma^2}{2}}Q_n(t)}{t} \right| \, dt = o(n^{-r/2}).
\end{equation}
Observe that, 
$$(it)^k e^{-\frac{\sigma^2t^2}{2}}=\frac{1}{\sqrt{2\pi\sigma^2}} \widehat{\frac{d^k}{dt^k}e^{-\frac{t^2}{2\sigma^2}}}=\widehat{\frac{d^k}{dt^k} \fn(t)}$$
where $\widehat{f}(x)=\int e^{-itx}f(t) \, dt$ is the Fourier transform of $f$. 
Therefore, 
\begin{equation}\label{PolyForDensity}
R_j(t)\fn(t) =\frac{1}{\sqrt{2\pi\sigma^2}}A_j\left(-i\frac{d}{dt}\right)\Big[e^{-\frac{t^2}{2\sigma^2}} \Big].
\end{equation}
Then, the required $P_p(x)$ for $p \geq 1$, can be found using the relation,
\begin{equation}\label{EdgePolyRel}
\fn(x) R_p(x)=\frac{d}{dx}\Big[\fn(x)P_p(x)\Big].
\end{equation}
For more details, we refer the reader to \cite[Chapter XVI.3,4]{Feller2}. 

Given $\ve >0$, choose $B> \frac{C_0}{\ve}$ where $C_0$ is as in \eqref{eq:basic}. Let $r \in \naturals$. Then we choose polynomials $P_p(x)$ as described above. Then, from \eqref{eq:basic} it follows that,
\begin{align*}
|F_{n}(x)-\cE_{r,n}(x)| &\leq \frac 1\pi\int_{-Bn^{r/2}}^{Bn^{r/2}}\left|\frac{\EXP(e^{it\frac{S_n-nA}{\sqrt{n}}})-e^{-\frac{t^2\sigma^2}{2}}(1+Q_n(t))}{t}\right|dt+\frac{C_0}{Bn^{r/2}} \\ & \leq I_1 + I_2+ I_3 + \frac{\ve}{n^{r/2}}
\end{align*}
where
$$
I_1=\frac 1\pi\int_{|t|<\delta\sqrt{n}}\left|\frac{\EXP(e^{it\frac{S_n-nA}{\sqrt{n}}})-e^{-\frac{t^2\sigma^2}{2}}(1+Q_n(t))}{t}\right|dt 
$$
$$
I_2=\frac 1\pi\int_{\delta\sqrt{n}<|t|<Bn^{r/2}}\left|\frac{\EXP(e^{it S_n/\sqrt{n}})}{t}\right|dt
$$
$$I_3=\frac 1\pi\int_{|t|>\delta\sqrt{n}}e^{-\frac{t^2\sigma^2}{2}}\left|\frac{1+Q_n(t)}{t}\right|dt.$$

From \eqref{Near0Est} we have that $I_1$ is $o(n^{-r/2})$. Because our choice of $\ve >0$ is arbitrary the proof is complete, if $I_2$ and $I_3$ are also $o(n^{-r/2})$. These follow from \eqref{MidEst}, \eqref{LarEst} and \eqref{Gaussian} below. 

It is easy to see that, 
\begin{align}\label{Gaussian}
\int_{|t|>\delta \sqrt{n}}e^{-\frac{t^2\sigma^2}{2}}\left|\frac{1+Q_n(t)}{t} \right|\, dt = \cO(e^{-cn})
\end{align}
for some $c>0$. Thus, we only need to control,
\begin{align*}
I_2 &=\int_{\delta \sqrt{n}<|t|<Bn^{r/2}}\left|\frac{\EXP(e^{itS_n/\sqrt{n}})}{t}\right|\, dt \\ &= \int_{\delta \sqrt{n}<|t|<\overline{\delta}\sqrt{n}}\left|\frac{\EXP(e^{itS_n/\sqrt{n}})}{t}\right|\, dt  + \int_{\overline{\delta}\sqrt{n}<|t|<Bn^{r/2}}\left|\frac{\EXP(e^{itS_n/\sqrt{n}})}{t}\right|\, dt 
\end{align*}
where $\overline{\delta}> \max \{\delta,K\}$ with $K$ as in (A4). 

By (A3) the spectral radius of $\cL_{t}$ has modulus strictly less than $1$. Because $t \mapsto \cL_t$ is continuous, for all $p<q$, there exists $\gamma<1$ and $C>0$, such that $\|\cL_{t}^m\|\leq C\gamma^m $ for all $p \leq |t| \leq q$ for sufficiently large $m$. Then using \eqref{MainAssum} for sufficiently large $n$ we have, 
\begin{align} \label{MidEst}
\int_{\delta \sqrt{n}<|t|<\overline{\delta}\sqrt{n}}\left|\frac{\EXP(e^{itS_n/\sqrt{n}})}{t}\right|\, dt &\leq \frac{1}{\delta \sqrt{n}}\int_{\delta \sqrt{n}<|t|<\overline{\delta}\sqrt{n}}\|\cL^n_{t/\sqrt{n}}\| \, dt \leq  \frac{C\gamma^n}{\sqrt{n}}. 
\end{align}
This shows that the integral converges to $0$ faster than any inverse power of $\sqrt{n}$. Next for sufficiently large $n$,
\begin{align}\label{LarEst}
\int_{\overline{\delta}\sqrt{n}<|t|<Bn^{r/2}}\left|\frac{\EXP(e^{it S_n/\sqrt{n}})}{t}\right|\, dt &\leq  \frac{1}{\overline{\delta}\sqrt{n}} \int_{\overline{\delta}\sqrt{n}<|t|<Bn^{r/2}} |\ell(\cL^n_{t/\sqrt n} v)| \, dt \\ &\leq \frac{2Bn^{r/2}}{\overline{\delta}n^{r_2+1/2}} \|\ell\| \|v\| \nonumber \\ &=Cn^{\frac{r-1}{2}-r_2} = o(n^{-r/2}).\nonumber
\end{align}

The second inequality is due to assumption (A4) i.e.\hspace{2pt}$\|\cL^n_{t/\sqrt{n}}\| \leq \frac{1}{n^{r_2}}$ where $r_2>\frac{r-1}{2}$ (we can assume $r_2>\frac{r-1}{2}$ for large $n$ due to \Cref{OnA1-A4}) and $K \leq \overline{\delta}<\frac{|t|}{\sqrt{n}}<Bn^{\frac{r-1}{2}}\leq n^{r_1}$ for $n \in \naturals$ with $n^{r_1-\frac{r-1}{2}} \geq B$. 
\end{proof}

The proof of \Cref{O1Exp} follows the same idea. We include its proof for completion. 

\begin{proof}[Proof of \Cref{O1Exp}] 
Because (A1) through (A3) hold with $s \geq 3$, we have \eqref{PolyComp} where $\varphi$ is continuous, $\varphi(0)=0$ and $r=1$. Given $\ve>0$, choose $B>\frac{C_0}{\ve}$. Then, 
\begin{align*}
|F_{n}(x)-\cE_{1,n}(x)| &\leq \frac 1\pi\int_{-B\sqrt{n}}^{B\sqrt{n}}\left|\frac{\EXP(e^{it\frac{S_n-nA}{\sqrt{n}}})-e^{-\frac{t^2\sigma^2}{2}}(1+Q_n(t))}{t}\right|dt+\frac{C_0}{B\sqrt{n}} \\ & \leq I_1 + I_2+ I_3 + \frac{\ve}{B\sqrt{n}}.
\end{align*} 
Because, $\varphi(t)=o(1)$ as $t \to 0$ and
\begin{align*}
\frac{\exp\left[ n \psi\big(\frac{t}{\sqrt{n}}\big)+\log Z\big(\frac{t}{\sqrt{n}}\big)\right]- 1 - Q_1(t)}{t} &= \frac{1}{\sqrt{n}}\varphi\Big(\frac{t}{\sqrt{n}}\Big)+t\hspace{2pt}\cO\Big(\frac{1}{n}\Big)
\end{align*}
we have that,
$$I_1=\int_{|t|<\delta \sqrt{n}} \bigg| \frac{\EXP(e^{it\frac{S_n-nA}{\sqrt{n}}})-e^{-\frac{t^2\sigma^2}{2}} - e^{-\frac{t^2\sigma^2}{2}}Q_1(t)}{t} \bigg| \, dt = o(n^{-1/2}).$$
Also, $I_3=\cO(e^{-cn})$. Finally, because of (A3) there is $\gamma<1$ such that,
\begin{align*}
\int_{\delta\sqrt{n}<|t|<B\sqrt{n}}\left|\frac{\EXP(e^{it S_n/\sqrt{n}})}{t}\right|\, dt = \int_{\delta<|t|<B}\left|\frac{\EXP(e^{it S_n})}{t}\right|\, dt \leq C \sup_{\delta\leq |t|\leq B}\|\cL^n_t\| \leq C\gamma^n
\end{align*}
Combining these estimates we have the result. 
\end{proof}

A slight modification of the previous proof gives us the proof of \Cref{O1ExpError}. Higher regularity assumption gives us better asymptotics near $0$ and the assumption on the faster decay of the characteristic function gives us more control in the mid range. 

\begin{proof}[Proof of \Cref{O1ExpError}]
Because (A1) through (A4) hold with $s \geq 4$, we have \eqref{PolyComp} where $\varphi$ is $C^1$, $\varphi(0)=0$ and $r=1$. Then, 
\begin{align*}
|F_{n}(x)-\cE_{1,n}(x)| &\leq \frac 1\pi\int_{-n^{1/2+r_1}}^{n^{1/2+r_1}}\left|\frac{\EXP(e^{it\frac{S_n-nA}{\sqrt{n}}})-e^{-\frac{t^2\sigma^2}{2}}(1+Q_n(t))}{t}\right|dt+\frac{C_0}{n^{1/2+r_1}} \\ & \leq I_1 + I_2+ I_3 + \frac{C_0}{n^{1/2+r_1}}
\end{align*}Because, $\varphi\big(\frac{t}{\sqrt{n}}\big) \sim \frac{t}{\sqrt{n}}$ near $0$ and 
\begin{align*}
\frac{\exp\left[ n \psi\big(\frac{t}{\sqrt{n}}\big)+\log Z\big(\frac{t}{\sqrt{n}}\big)\right]- 1 - Q_1(t)}{t} &= \frac{1}{\sqrt{n}}\varphi\Big(\frac{t}{\sqrt{n}}\Big)+t\hspace{2pt}\cO\Big(\frac{1}{n}\Big)
\end{align*}
we have that,
$$I_1=\int_{|t|<\delta \sqrt{n}} \bigg| \frac{\EXP(e^{it\frac{S_n-nA}{\sqrt{n}}})-e^{-\frac{t^2\sigma^2}{2}} - e^{-\frac{t^2\sigma^2}{2}}Q_1(t)}{t} \bigg| \, dt = \cO\Big(\frac{1}{n}\Big).$$
Also, $I_3=\cO(e^{-cn})$. As before, \eqref{MidEst} holds for $\overline{\delta}>\max\{\delta, K\}$. 

$\|\cL^n_{t}\| \leq \frac{1}{n^{r_2}}$ where $K \leq \overline{\delta}<|t|<n^{r_1}$.
\begin{align*}
\int_{\overline{\delta}\sqrt{n}<|t|<n^{1/2+r_1}}\left|\frac{\EXP(e^{it S_n/\sqrt{n}})}{t}\right|\, dt &= \int_{\overline{\delta}<|t|<n^{r_1}}\left|\frac{\EXP(e^{it S_n})}{t}\right|\, dt \leq C n^{r_1-r_2+\frac{1}{2}}
\end{align*}
Because $r_2$ can be made arbitrarily large by choosing $n$ large enough, $I_2=\cO\big(\frac{1}{n}\big)$.
Therefore, 
$$|F_{n}(x)-\cE_{1,n}(x)|=\cO\Big(\frac{1}{n^s}\Big)$$
where $s=\min\big\{1,\frac{1}{2}+r_1\big\}$
and we have the required conclusion. 
\end{proof}

\begin{rem}
In the proof above, $I_1$ gives the contribution to the error from the expansion of the characteristic function near 0. This dominates when $r_1 \geq \frac{1}{2}$. 
\end{rem}

Weak forms of Edgeworth expansions are discussed in detail in \cite{Br}. We adapt the ideas found in \cite{Br} to our proofs of Theorems \ref{WLEdgeExp} and \ref{WGEdgeExp}. One key difference is the requirement on $f$ to have two more derivatives than required in \cite{Br}. This compensates for the lack of control over the tail of the characteristic function of $S_N$. 
In fact, it is enough to assume $1+\alpha$ more derivatives. But to avoid technicalities we stick to the stronger regularity assumption. In the i.i.d.\hspace{3pt}case, as shown in \cite{Br}, a Diophantine assumption takes care of this. See section \ref{IID} for a detailed discussion of the i.i.d.\hspace{3pt}case.


\begin{proof}[Proof of Theorem \ref{WLEdgeExp}]
Recall that $\wh{f}(t)=\int e^{-itx}f(x) \, dx$ and pick $A$ as in \eqref{CLT}. Then by Plancherel theorem, 
\begin{align}\label{Plancherel}
\EXP(f(S_n-nA))&=\frac{1}{2\pi}\int \wh{f}(t)\EXP(e^{it(S_n-nA)})\, dt \\
\implies \sqrt{n}\EXP(f(S_n-nA))&=\frac{1}{2\pi}\int \wh{f}\left(\frac{t}{\sqrt{n}}\right)\EXP(e^{it\frac{S_n-nA}{\sqrt{n}}})\, dt. \nonumber
\end{align} 

We first estimate RHS away from $0$. Fix small $\delta>0$. (A particular $\delta$ is chosen later.) Notice that for all $\delta \leq |t| \leq K$ (where $K$ as in (A4)), there exists $c_0 \in (0,1)$ such that $\|\cL^n_{t}\|\leq c_0^n$. Thus,
\begin{align*}
\bigg| \int_{\delta<|t|<K} \wh{f}(t)\EXP(e^{it(S_n-nA)}) \, dt \bigg|\leq \int_{\delta<|t|<K} \left|\wh{f}(t)\ell(\cL^n_{t} v)\right| \, dt  \leq C\|f\|_1c^n_0.
\end{align*}
By Remark \ref{OnA1-A4}, for large $n$ we can assume $r_2 > r_1+(r+1)/2$. Therefore,
\begin{align*}
\bigg| \int_{K<|t|<n^{r_1}} \wh{f}(t)\EXP(e^{it(S_n-nA)}) \, dt \bigg| \leq \|f\|_1\|\ell\|  \|v\| \int_{K<|t|<n^{r_1}} \|\cL^n_{t} \|\, dt  &\leq \frac{C\|f\|_1}{n^{r_2-r_1}} \\ &=\|f\|_1o(n^{-(r+1)/2}).
\end{align*}

Because $f \in F^{q+2}_{r+1}$, we have that $t^q\wh{f}(t)=(-i)^q\wh{f^{(q)}}(t)$ and $\wh{f^{(q)}}$ is integrable. In fact, $|\wh{f^{(q)}}(t)| \leq \frac{C}{(1+|t|)^2}$. Note that we are using only the fact that $f$ is $q+2$ times continuously differentiable with integrable derivatives. Therefore, for this to be true, $f \in F^{q+2}_0$ is sufficient. Integrability of $\wh{f^{(q)}}$ along with $q>\frac{r+1}{2r_1}$ implies,
\begin{align}\label{AtInfty}
\bigg| \int_{|t|>n^{r_1}} \wh{f}(t)\EXP(e^{it(S_n-nA)}) \, dt \bigg| \leq \int_{|t|>n^{r_1}} |\wh{f}(t)| \, dt &\leq \int_{|t|>n^{r_1}}  \Big|\frac{\widehat{f^{(q)}}(t)}{t^q}\Big| \, dt \\ &\leq \frac{\|\widehat{f^{(q)}}\|_1}{n^{r_1q}} = \|\widehat{f^{(q)}}\|_1 o (n^{-(r+1)/2}). \nonumber
\end{align}
Therefore,
\begin{equation}\label{t>delta}
\bigg| \int_{|t|>\delta} \wh{f}(t)\EXP(e^{it(S_n-nA)}) \, dt \bigg| = o(n^{-(r+1)/2}).
\end{equation}

From \eqref{CharFn1}, for $ |t| \leq \delta \sqrt{n}$, we have, $$\EXP(e^{it\frac{S_n-nA}{\sqrt{n}}}) = e^{-\frac{\sigma^2t^2}{2}}e^{t^2\cO(\delta)}(1+\cO(\delta))+\cO(\epsilon^n_0).$$ Thus, choosing small $\delta$, for large $n$ when $|t|<\delta \sqrt{n}$ there exist $c, C>0$ such that 
$$\big|\EXP\big(e^{it\frac{S_n-nA}{\sqrt{n}}}\big) \big| \leq Ce^{-ct^2}.$$ 
Then, $$ \sqrt{D \log n} < |t|<\delta \sqrt{n} \implies \Big|\EXP(e^{it\frac{S_n-nA}{\sqrt{n}}})\Big| \leq Ce^{-cD \log n} = \frac{C}{n^{cD}}$$
and
\begin{align*}
\bigg|\int_{\sqrt{\frac{D \log n}{n}} < |t|<\delta } \widehat{f}(t)\EXP(e^{it(S_n-nA)}) \, dt \bigg|  &=\bigg|\int_{\sqrt{D \log n} < |t|<\delta \sqrt{n} } \widehat{f}\left(\frac{t}{\sqrt{n}}\right)\EXP(e^{it\frac{S_n-nA}{\sqrt{n}}}) \frac{dt}{\sqrt{n}} \bigg| \\ &\leq \frac{C}{n^{cD}} \int_{\sqrt{\frac{D \log n}{n}} < |t|<\delta} |\widehat{f}(t)| \, dt = \frac{2\delta  C \|f\|_1}{n^{cD}}.
\end{align*}
Combining this with \eqref{t>delta} and choosing $D$ such that, $cD>(r+1)/2$ we have that,
\begin{equation}\label{t>logn}
\bigg| \int_{|t|>\sqrt{\frac{D\log n}{n}}} \wh{f}(t)\EXP(e^{it(S_n-nA)}) \, dt \bigg| = o(n^{-(r+1)/2}).
\end{equation}

Next, suppose $|t|<\sqrt{\frac{D \log n}{n}}$. Then,
$$\widehat{f}(t) = \sum_{j=0}^r \frac{\widehat{f}^{(j)}(0)}{j!}t^j+ \frac{t^{r+1}}{(r+1)!}\widehat{f}^{(r+1)}(\epsilon(t))$$
where $0\leq| \epsilon(t)| \leq |t|$. Note that, $$|\widehat{f}^{(r+1)}(\epsilon(t))|=\bigg|\int x^{r+1}e^{-i\epsilon(t)x}f(x) \, dx\bigg| \leq \int |x^{r+1}f(x)| \, dx \leq C_{r+1}(f).$$ Therefore,
\begin{align*}
\int_{|t|<\sqrt{D \log n}} &\widehat{f}\Big(\frac{t}{\sqrt{n}}\Big)\EXP(e^{it\frac{S_n-nA}{\sqrt{n}}}) \, dt \\ &= \sum_{j=0}^r \frac{\widehat{f}^{(j)}(0)}{j!n^{j/2}} \int_{|t|<\sqrt{D\log n}}t^j\EXP(e^{it\frac{S_n-nA}{\sqrt{n}}}) \, dt \nonumber  \\ &\phantom{aaaaaaaaa}+ \frac{1}{n^{(r+1)/2}}\frac{1}{(r+1)!}\int_{|t|<\sqrt{D\log n}}\EXP(e^{it\frac{S_n-nA}{\sqrt{n}}}) t^{r+1}  \widehat{f}^{(r+1)}\Big(\epsilon\Big(\frac{t}{\sqrt{n}}\Big)\Big)\, dt \nonumber
\end{align*}
where
\begin{align*}
\bigg|\int_{|t|<\sqrt{D\log n}} \EXP(e^{it\frac{S_n-nA}{\sqrt{n}}}) t^{r+1} &\widehat{f}^{(r+1)}\Big(\epsilon\Big(\frac{t}{\sqrt{n}}\Big)\Big)\, dt \bigg|\leq C_{r+1}(f) \int |t|^{r+1} e^{-ct^2} \, dt
\end{align*}
for large $n$. Hence,
\begin{multline}\label{TaylorfHat}
\int_{|t|<\sqrt{D \log n}} \widehat{f}\Big(\frac{t}{\sqrt{n}}\Big) \EXP(e^{it\frac{S_n-nA}{\sqrt{n}}}) \, dt \\ = \sum_{j=0}^r \frac{\widehat{f}^{(j)}(0)}{j!n^{j/2}} \int_{|t|<\sqrt{D\log n}}t^j\EXP(e^{it\frac{S_n-nA}{\sqrt{n}}}) \, dt  + C_{r+1}(f)\cO(n^{-(r+1)/2}).
\end{multline}
Because $s=r+2$, from $\eqref{PolyComp}$, 
\begin{align}\label{FreqAsympPoly}
e^{\frac{\sigma^2t^2}{2}}\EXP(e^{it \frac{S_n-nA}{\sqrt{n}}})&=\exp \Big( n\psi\Big(\frac{t}{\sqrt{n}}\Big) \Big)Z\Big(\frac{t}{\sqrt{n}}\Big) + e^{-\frac{inAt}{\sqrt{n}}+\frac{\sigma^2t^2}{2}}\ell\big(\Lambda^n_{t/\sqrt{n}}v \big) \nonumber \\  
 &= \sum_{k=0}^{r} \frac{A_k(t)}{n^{k/2}}+\frac{t^{r}}{n^{r/2}}\varphi\Big(\frac{t}{\sqrt{n}}\Big)+\cO\Big(\frac{\log^{(r+1)/2}(n)}{n^{(r+1)/2}}\Big).
\end{align}
Substituting this in \eqref{TaylorfHat},
\begin{align}\label{Near0WeakEdge}
&\int_{|t|<\sqrt{D \log n}} \widehat{f}\left(\frac{t}{\sqrt{n}}\right) \EXP(e^{it\frac{S_n-nA}{\sqrt{n}}}) \, dt \\ &= \sum_{j=0}^r \frac{\widehat{f}^{(j)}(0)}{j!n^{j/2}} \int_{|t|<\sqrt{D\log n}}t^j e^{-\sigma^2t^2/2}\sum_{k=0}^{r} \frac{A_k(t)}{n^{k/2}} \, dt + \cO\Big(\frac{\log^{(r+1)/2}(n)}{n^{(r+1)/2}}\Big) \nonumber \\ &= \sum_{k=0}^{r}  \sum_{j=0}^r  \frac{\widehat{f}^{(j)}(0)}{j!n^{(k+j)/2}} \int_{|t|<\sqrt{D\log n}}t^j A_k(t) e^{-\sigma^2t^2/2} \, dt + o(n^{-r/2}). \nonumber
\end{align}
Recall from \eqref{Parity} that $A_k$ and $k$ have the same parity. Therefore, if $k+j$ is odd then $$\int_{|t|<\sqrt{D\log n}}t^j A_k(t) e^{-\sigma^2t^2/2} \, dt = 0.$$
So only the integral powers of $n^{-1}$ will remain in the expansion. Also, there is $C$ that depends only on $r$ such that,
\begin{align*}
\int_{|t|\geq \sqrt{D\log n}}t^j A_k(t) e^{-\sigma^2t^2/2} \, dt &\leq C \int_{|t|\geq \sqrt{D\log n}}t^{4r} e^{-\sigma^2t^2/2} \, dt \leq \frac{ C}{e^{\sigma^2D\log(n)/4}} =\frac{C}{n^{\sigma^2D/4}}.
\end{align*}
Choosing $D$ such that $2\sigma^2D>(r+1)/2$, $$\int_{\reals} t^j A_k(t) e^{-\sigma^2t^2/2} \, dt= \int_{|t|\leq \sqrt{D\log n}}t^j A_k(t) e^{-\sigma^2t^2/2} \, dt + o(n^{-r/2}).$$
Therefore, fixing $D$ large, we can assume the integrals to be over the whole real line. Now, define $$a_{k,j}=\int_\reals t^j A_k(t) e^{-\sigma^2 t^2/2} \, dt $$ 
and substitute
$$\widehat{f}^{(j)}(0)= \int_{\reals} (-it)^j f(t) \, dt$$
in \eqref{Near0WeakEdge} to obtain,
\begin{align}\label{Near0Exp}
\int_{|t|<\sqrt{D \log n}}\widehat{f}\Big(\frac{t}{\sqrt{n}}\Big)\EXP(e^{it\frac{S_n-nA}{\sqrt{n}}}) \, dt &= \sum_{k=0}^{r}  \sum_{j=0}^r  a_{k,j}\frac{1}{j!n^{(k+j)/2}} \int_{\reals} (-it)^j f(t) \, dt  + o(n^{-r/2}) \\ &= \sum_{p=0}^r \frac{1}{n^p} \int_\reals f(t) \sum_{k+j=2p} \frac{a_{k,j}}{j!} (-it)^j  \, dt + o(n^{-r/2}) \nonumber \\&= \sum_{p=0}^{\lfloor r/2 \rfloor} \frac{1}{n^p} \int_\reals f(t) P_{p,l}(t) \, dt + o(n^{-r/2}) \nonumber
\end{align}
where 
\begin{equation}\label{WeakLocalPoly}
P_{p,l}(t)= \sum_{k+j=2p} \frac{a_{k,j}}{j!}(-it)^j.
\end{equation} 
The final simplification was done by absorbing the terms corresponding to higher powers of $n^{-1}$ into the error term. Note that $P_{p,l}$ is a polynomial of degree at most $2p$ and that once we know $A_0,\dots, A_{2p}$ we can compute $P_{p,l}$.  

Finally combining \eqref{Near0Exp} and \eqref{t>logn} substituting in \eqref{Plancherel} we obtain the required result as shown below.
\begin{align*}
\sqrt{n}\EXP(f(S_n-nA))&=\frac{1}{2\pi}\int_{|t|<\sqrt{D\log n}} \wh{f}\Big(\frac{t}{\sqrt{n}}\Big)\EXP(e^{it\frac{S_n-nA}{\sqrt{n}}})\, dt \\ &\phantom{aaaaaaaaaaaaaaaaaaa} + \frac{\sqrt{n}}{2\pi}\int_{|t|>\sqrt{\frac{D\log n}{n}}}\wh{f}(t)\EXP(e^{it(S_n-nA)})\, dt \\ &= \frac{1}{2\pi}\sum_{p=0}^{\lfloor r/2 \rfloor} \frac{1}{n^p} \int_\reals f(t) P_{p,l}(t) \, dt + o(n^{-r/2}) + \sqrt{n}\ o(n^{-(r+1)/2})
\\ &= \frac{1}{2\pi}\sum_{p=0}^{\lfloor r/2 \rfloor} \frac{1}{n^p} \int_\reals f(t) P_{p,l}(t) \, dt + o(n^{-r/2}).
\end{align*}
\end{proof}

The proof of \Cref{WGEdgeExp} uses the relation \eqref{FreqAsympPoly} derived in the previous proof. But we do not use the Taylor expansion of $\wh{f}$, so differentiability of $\wh{f}$ is not required. So the assumption on the decay of $f$ at infinity can be relaxed.  

\begin{proof}[Proof of Theorem \ref{WGEdgeExp}]
Multiplying \eqref{FreqAsympPoly} by $\widehat{f}$ and integrating we obtain,
\begin{multline*}
\int_{|t|<\sqrt{D\log n}}\widehat{f}\Big(\frac{t}{\sqrt{n}}\Big)\EXP(e^{it \frac{S_n-nA}{\sqrt{n}}})\, dt \\ = \sum_{k=0}^{r}\frac{1}{n^{k/2}} \int_{|t|<\sqrt{D\log n}}\widehat{f}\Big(\frac{t}{\sqrt{n}}\Big) A_k(t) e^{-\frac{\sigma^2t^2}{2}}  \, dt + \|f\|_1o(n^{-r/2}).
\end{multline*}

As in the proof of Theorem \ref{WLEdgeExp} the integrals above can be replaced by integrals over $\reals$ without altering the order of the error because 
$$ \int_{|t|\geq\sqrt{D\log n}}\widehat{f}\Big(\frac{t}{\sqrt{n}}\Big) A_k(t) e^{-\frac{\sigma^2t^2}{2}}  \, dt \leq \|f\|_1 \, o(n^{-r/2})$$
for $D$ such that $2\sigma^2D>(r+1)/2$. Therefore,
\begin{align*}
\int_{|t|<\sqrt{D\log n}}\widehat{f}\Big(\frac{t}{\sqrt{n}}\Big)\EXP(e^{it \frac{S_n-nA}{\sqrt{n}}})\, dt = \sum_{k=0}^{r}\frac{1}{n^{k/2}} \int_{\reals}\widehat{f}\Big(\frac{t}{\sqrt{n}}\Big) A_k(t)e^{-\frac{\sigma^2t^2}{2}}  \, dt + \|f\|_1o(n^{-r/2}).
\end{align*}
We pick $R_p$ as in \eqref{PolyForDensity} and claim $P_{p,g}=R_p$.

Note that $\sqrt{n} f(t\sqrt{n}) \longleftrightarrow  \widehat{f}(t/\sqrt{n}) $. So by the Plancherel theorem,
$$\int_\reals \sqrt{n}f\left(t\sqrt{n}\right)R_k(t)\fn(t) \, dt = \frac{1}{2\pi}\int_{\reals}\widehat{f}\Big(\frac{t}{\sqrt{n}}\Big) A_k(t) e^{-\frac{\sigma^2t^2}{2}} \, dt .$$
Thus, 
\begin{align}\label{GlobalNear0Asymp}
\frac{1}{2\pi\sqrt{n}}\int_{|t|<\sqrt{D\log n}}&\widehat{f}\Big(\frac{t}{\sqrt{n}}\Big) \EXP(e^{it\frac{S_n-nA}{\sqrt{n}}}) \, dt \nonumber \\ &= \frac{1}{\sqrt{n}}\Big(\sum_{p=0}^{r}\frac{1}{n^{p/2}} \int_{\reals} \sqrt{n}f\big(t\sqrt{n}\big)R_p(t)\fn(t)\, dt+\|f\|_1 o(n^{-r/2})\Big) \nonumber \\ &=\sum_{p=0}^{r}\frac{1}{n^{p/2}} \int_{\reals} f\big(t\sqrt{n}\big)R_p(t)\fn(t)\, dt + \|f\|_1 o(n^{-(r+1)/2}).
\end{align}
Note that \eqref{t>logn} holds because $f \in F_{0}^{q+2}$. Now, combining \eqref{GlobalNear0Asymp} with the estimate \eqref{t>logn} completes the proof.
\end{proof}

\begin{rem}\label{AltPrfWEdge}
Proofs of both the \Cref{NewWLEdgeExp} and \Cref{NewWGEdgeExp} are almost identical except the estimate  \eqref{AtInfty}. In order to obtain the same asymptotics, the assumption on the integrability of $\wh{f^{(q)}}$ can be replaced by $(A5)$ and the fact that $|\widehat{f}(t)| \sim \frac{1}{t}$ as $t \to \pm \infty$.
\begin{align*}
\bigg| \int_{|t|>n^{r_1}} \wh{f}(t)\EXP(e^{it(S_n-nA)}) \, dt \bigg| &\leq C\int_{|t|>n^{r_1}} |\wh{f}(t)| \|\cL^n_t\|\, dt \\ &\leq C\|f\|_1\int_{|t|>n^{r_1}}  \frac{1}{t^{1+\alpha}} \, dt  \\ &\leq  \frac{C\|f\|_1}{n^{r_1(\alpha-\epsilon)}}\int \frac{1}{t^{1+\epsilon}}\, dt 
\end{align*}
Since, $r_1\alpha > \frac{r+1}{2}$ choosing $\epsilon$ small enough we can make the expression $\|f\|_1\hspace{2pt}o(n^{-(r+1)/2})$ as required. 
\end{rem}

\begin{proof}[Proof of Theorem \ref{AVGEdgeExp}]
Select $A$ as in \eqref{CLT}. Define $P_p$ by \eqref{PolyForDensity} and \eqref{EdgePolyRel} 
and $\tilde{f}_n(x)=f(-\sqrt{n}x)$. Then the change of variables $-\frac{y}{\sqrt{n}} \to y$ yields, 
\begin{align*}
\int \Big[\Prob\Big(\frac{S_n-nA}{\sqrt{n}}\leq x+\frac{y}{\sqrt{n}}\Big)-\fN\Big(x+\frac{y}{\sqrt{n}}\Big)-\cE_{r,n}\Big(&x+\frac{y}{\sqrt{n}}\Big)\Big] f(y) dy \\ &= \sqrt{n}\Delta_n \ast \tilde{f}_n (x).
\end{align*}
where $\cE_{r,n}(x)=\sum_{p=1}^r \frac{1}{n^{p/2}} P_p(x)\fn(x)$. 

Notice that $\EXP(e^{it\frac{S_n-nA}{\sqrt{n}}})\wh{\tilde{f}_n} \in L^1$. Therefore, $$(F_n \ast \tilde{f}_n)'(x)=\frac{1}{2\pi}\int e^{-itx}\EXP(e^{it\frac{S_n-nA}{\sqrt{n}}})\wh{\tilde{f}_n}(t) \, dt. $$
Also,
$$\Big[\fn+ \Big(\sum_{p=1}^r \frac{1}{n^{p/2}} R_p\fn\Big)\Big]\ast \tilde{f}_n(x) =  \frac{1}{2\pi}\int e^{-itx} e^{-\frac{\sigma^2t^2}{2}}\big(1+Q_n(t)\big)\wh{\tilde{f}_n}(t) \, dt$$ where $R_p$'s are polynomials given by \eqref{PolyForDensity} and $Q_n(t)$ is given by \eqref{MainPolyExp}. From these we conclude that, 
\begin{align}\label{FourierInvDensity}
(\Delta_n \ast \tilde{f}_n)' (x) = \frac{1}{2\pi}\int e^{-itx}\big(\EXP(e^{it\frac{S_n-nA}{\sqrt{n}}})- e^{-\frac{\sigma^2t^2}{2}}\big(1+Q_n(t)\big)\wh{\tilde{f}_n}(t) \, dt.
 \end{align}
We claim that,
\begin{align}\label{FourierInvDistr}
(\Delta_n \ast \tilde{f}_n) (x) &= \frac{1}{2\pi} \int e^{-itx}\frac{\EXP(e^{it\frac{S_n-nA}{\sqrt{n}}})- e^{-\frac{\sigma^2t^2}{2}}\big(1+Q_n(t)\big)}{-it}\wh{\tilde{f}_n}(t)   \, dt.  
\end{align}

Indeed, if the right side of \eqref{FourierInvDistr} converges absolutely, then Riemann-Lebesgue Lemma gives us that it converges $0$ as $|x| \to \infty$. Differentiating \eqref{FourierInvDistr} we obtain \eqref{FourierInvDensity}. Thus the two sides in \eqref{FourierInvDistr} can differ only by a constant. Since both are $0$ at $\pm\infty$, this constant is $0$ and \eqref{FourierInvDistr} holds.


Now, we are left with the task of showing that the right side of \eqref{FourierInvDistr} converges absolutely. From the definition of $\tilde{f}_n$ it follows that, $\wh{\tilde{f}_n}(t) = \frac{1}{\sqrt{n}} \wh{f}\big(-\frac{t}{\sqrt{n}}\big).$ Combining this with \eqref{FullNear0Est}, we have that,
\begin{align*}
\bigg|\int_{|t|<\delta \sqrt{n}} e^{-itx} &\frac{\EXP(e^{it\frac{S_n-nA}{\sqrt{n}}})- e^{-\frac{\sigma^2t^2}{2}}\big(1+Q_n(t)\big)}{-it}\wh{\tilde{f}_n}(t)   \, dt \bigg| \\ &\leq \int_{|t|<\delta \sqrt{n}}\bigg|\frac{\EXP(e^{it\frac{S_n-nA}{\sqrt{n}}})- e^{-\frac{\sigma^2t^2}{2}}\big(1+Q_n(t)\big)}{t}\wh{\tilde{f}_n}(t)  \bigg|   \, dt \\ & \leq \frac{\|f\|_1}{\sqrt{n}} \int_{|t|<\delta \sqrt{n}}\bigg|\frac{\EXP(e^{it\frac{S_n-nA}{\sqrt{n}}})- e^{-\frac{\sigma^2t^2}{2}}\big(1+Q_n(t)\big)}{t}  \bigg|   \, dt \\ &=  \|f\|_1 o(n^{-(r+1)/2}).
\end{align*} 
Note that, 
\begin{align}
\bigg|\int_{|t|>\delta \sqrt{n}}e^{-itx} &\frac{\EXP(e^{it\frac{S_n-nA}{\sqrt{n}}})- e^{-\frac{\sigma^2t^2}{2}}\big(1+Q_n(t)\big)}{-it}\wh{\tilde{f}_n}(t)   \, dt \bigg| \nonumber \\ 
&\leq \int_{|t|>\delta \sqrt{n}}\bigg|\frac{\EXP(e^{it\frac{S_n-nA}{\sqrt{n}}})- e^{-\frac{\sigma^2t^2}{2}}\big(1+Q_n(t)\big)}{t} \wh{f}\Big(-\frac{t}{\sqrt{n}}\Big) \bigg|  \, dt \nonumber \\ 
&\leq \frac{1}{\sqrt{n}} \int_{|t|>\delta} \bigg|\frac{\EXP(e^{-it(S_n-nA)})- e^{-\frac{n^2\sigma^2t^2}{2}}\big(1+Q_n(-\sqrt{n}t)\big)}{t} \wh{f}(t)\bigg|   \, dt \nonumber \\ 
&\leq \frac{1}{\sqrt{n}} \int_{|t|>\delta} \bigg|\frac{\EXP(e^{-it(S_n-nA)})} {t}\wh{f}(t)\bigg|   \, dt + \cO(e^{-cn^2}). \nonumber
\end{align} 
Put, 
\begin{equation*}
J_n=\frac{1}{\sqrt{n}}\int_{|t|>\delta} \bigg|\frac{\EXP(e^{-it(S_n-nA)})} {t}\wh{f}(t)\bigg|   \, dt.
\end{equation*}
We claim $J_n=o(n^{-(r+1)/2})$. This proves that \eqref{FourierInvDistr} converges absolutely as required.

To conclude the asymptotics of $J_n$, choose $\overline{\delta}> \max \{\delta,K\}$ where $K$ as in (A4). From (A3) there exists $\gamma<1$ such that $\|\cL_{t}^n\|\leq \gamma^n $ for all $\delta \leq |t| \leq \overline{\delta}$ for sufficiently large $n$. Then, using \eqref{MainAssum} for sufficiently large $n$ we have, 
\begin{align*} 
\frac{1}{\sqrt{n}}\int_{\delta <|t|<\overline{\delta}}\left|\frac{\EXP(e^{-it(S_n-nA)})}{t}\wh{f}(t)\right|\, dt \leq \frac{C\|f\|_1}{\delta \sqrt{n}}\int_{\delta <|t|<\overline{\delta}}\|\cL^n_{t}\| \, dt = \cO(\gamma^n). 
\end{align*}
Next, for $K\leq\overline{\delta}\leq |t| \leq n^{r_1}$, $\|\cL_{t}^n\|\leq \frac{1}{n^{r_2}} $. 
Hence, for $n$ sufficiently large so that $r_2>\frac{r}{2}$,
\begin{align*}
\frac{1}{\sqrt{n}}\int_{\overline{\delta}<|t|<n^{r_1}}\left|\frac{\EXP(e^{-it(S_n-nA)})}{t}\wh{f}(t)\right|\, dt &\leq \frac{C}{\delta \sqrt{n}}\int_{\overline{\delta}<|t|<n^{r_1}}\|\cL^n_{t}\||\wh{f}(t)| \, dt \\ &\leq \frac{C\|\wh{f}\|_1}{n^{r_2+1/2}}=o(n^{-(r+1)/2})
\end{align*} 
Since $q>\frac{r}{2r_1}$, we have that,
\begin{align*}
\frac{1}{\sqrt{n}}\int_{|t|>n^{r_1}}\left|\frac{\EXP(e^{-it(S_n-nA)})}{t}\wh{f}(t)\right|\, dt &\leq \frac{\|f^{(q)}\|_1}{\sqrt{n}}\int_{|t|>n^{r_1}}\frac{1}{|t|^{q+1}} \, dt \leq \frac{C\|f^{(q)}\|_1}{n^{qr_1+1/2}}=o(n^{-(r+1)/2})
\end{align*} 
Combining the above estimates, $J_n=C^q(f)o(n^{-(r+1)/2})$.

This completes the proof that $(\Delta_n \ast \tilde{f}_n) (x)=o(n^{-(r+1)/2})$. Hence, 
\begin{align*}
\int \Big[\Prob\Big(\frac{S_n-nA}{\sqrt{n}}&\leq x+\frac{y}{\sqrt{n}}\Big)-\fN\Big(x+\frac{y}{\sqrt{n}}\Big)\Big)\Big] f(y) dy \\ &= \int\cE_{r,n}\Big(x+\frac{y}{\sqrt{n}}\Big) f(y)\, dy + \sqrt{n}\Delta_n \ast \tilde{f}_n (x) \\ &= \sum_{p=1}^r \frac{1}{n^{p/2}} \int P_p\Big(x+\frac{y}{\sqrt{n}}\Big)\fn(x) f(y)\, dy + C^{q}(f)o(n^{-r/2})
\end{align*}
as required.
\end{proof}

In the lattice case, periodicity allows us to simplify the proof significantly although the idea behind the proof is similar to previous proofs. 

\begin{proof}[Proof of Theorem \ref{LatticeEdgeExp}]
Under assumptions (A1) and (A2) we have the CLT for $S_n$. Put $A$ as in \eqref{CLT}. We observe that, 
\begin{align*}
2\pi\Prob(S_n=k) &= \int_{-\pi}^{\pi} e^{-itk} \EXP(e^{itS_n}) \, dt =\int_{-\pi}^{\pi} e^{-itk} \ell(\cL^n_t v) \, dt. \nonumber 
\end{align*}
After changing variables and using \eqref{eq:char fn}, \eqref{av proj} we have,
\begin{align}\label{LatticeFT}
2\pi \sqrt{n}\Prob\left(S_n=k\right)= \int_{-\pi \sqrt{n}}^{\pi\sqrt{n}} e^{-\frac{itk}{\sqrt{n}}} \mu\Big(\frac{t}{\sqrt{n}}\Big)^nZ\Big( \frac{t}{\sqrt{n}} \Big)\, dt  + \int_{-\pi \sqrt{n}}^{\pi\sqrt{n}} e^{-\frac{itk}{\sqrt{n}}} \ell\big(\Lambda^n_{t/\sqrt{n}} v\big)  \, dt. 
\end{align}
By $\widetilde{(\text{A3})}$ there exists $C>0$ and $r \in (0,1)$ (both independent of $t$) such that $|\ell\left(\Lambda^n_{t} v\right)|\leq Cr^n$ for all $t\in[-\pi,\pi]$. Therefore the second term of \eqref{LatticeFT} decays exponentially fast to $0$ as $n \to \infty$. 

Now, we focus on the first term. Using the same strategy as in the proof of Theorem \ref{EdgeExp} we have,
\begin{align}\label{LatticeEigenExp}
\mu\Big(\frac{t}{\sqrt{n}}\Big)^nZ\Big( \frac{t}{\sqrt{n}}\Big) &=  e^{\frac{inAt}{\sqrt{n}}-\frac{\sigma^2t^2}{2}} \left[1+ Q_n(t)+ o(n^{-r/2})\right]
\end{align}
where $Q_n(t)$ is as in \eqref{MainPolyExp}. Define $R_j$ as in \eqref{PolyForDensity}.
\begin{align*}
2\pi \sqrt{n} &\Prob(S_n=k) - 2\pi \bigg\{\frac{1}{\sqrt{2\pi}}e^{-\frac{(k-nA)^2}{2\sigma^2n}}\bigg(1+\sum_{j=1}^{r}\frac{(R_p(k-nA)/\sqrt{n})}{n^{j/2}}\bigg)\bigg\} \\ 
&= \int_{-\pi \sqrt{n}}^{\pi\sqrt{n}} e^{-\frac{itk}{\sqrt{n}}} \mu\Big(\frac{t}{\sqrt{n}}\Big)^nZ\Big( \frac{t}{\sqrt{n}} \Big)\, dt \\ &\phantom{aaaaa}- \int_{-\infty}^\infty e^{-\frac{it(k-nA)}{\sqrt{n}}}e^{-\sigma^2t^2/2}\, dt - \int_{-\infty}^\infty e^{-\frac{itk}{\sqrt{n}}} e^{-\frac{\sigma^2t^2}{2}}Q_n(t)\, dt +o(n^{-r/2}).
\end{align*}
We estimate the RHS by estimating the three integrals given below, 
\begin{align*}
I_1 &=\int_{-\delta \sqrt{n}}^{\delta \sqrt{n}} e^{-\frac{itk}{\sqrt{n}}} \mu\Big(\frac{t}{\sqrt{n}}\Big)^nZ\Big( \frac{t}{\sqrt{n}} \Big) - e^{-\frac{it(k-nA)}{\sqrt{n}}} e^{-\frac{\sigma^2t^2}{2}}[1+Q_n(t)]  \, dt \\ I_2&=\int_{\delta\sqrt{n}<|t|<\pi \sqrt{n}} e^{-\frac{itk}{\sqrt{n}}} \mu\Big(\frac{t}{\sqrt{n}}\Big)^nZ\Big( \frac{t}{\sqrt{n}} \Big) \, dt \\ I_3&=\int_{|t|>\delta\sqrt{n}} e^{-\frac{it(k-nA)}{\sqrt{n}}}e^{-\frac{\sigma^2t^2}{2}}[1+Q_n(t)]  \, dt.
\end{align*}
Clearly, $|I_3|$ decays to $0$ exponentially fast as $n \to \infty$. Also, $|\mu(2\pi)|=1$ and $|\mu(t)|\in (0,1)$ for $0<|t|<2\pi$. Therefore, there exists $\epsilon>0$ such that $|\mu(t)| < \epsilon$ on $\delta \leq |t| \leq \pi$. Put $M= \max_{\delta\leq |t| \leq \pi}{|Z(t)|}$. Then, $$|I_2| \leq M \sqrt{n} \int_{\epsilon < |t| < \pi} |\mu(t)|^n \, dt \leq 2M(\pi - \delta) \sqrt{n} \epsilon^n. $$ Hence, $|I_2|$ decays to $0$ exponentially fast as $n \to \infty$. From \eqref{LatticeEigenExp}, we have that
\begin{align*}
 e^{-\frac{itk}{\sqrt{n}}} \Big[ \mu\Big(\frac{t}{\sqrt{n}}\Big)^nZ\Big( \frac{t}{\sqrt{n}}\Big) - e^{\frac{inAt}{\sqrt{n}}}e^{-\frac{\sigma^2t^2}{2}}[1+Q_n(t)]\Big] = e^{-\frac{\sigma^2t^2}{2}} o(n^{-r/2}).
\end{align*}
This implies $|I_1|=o(n^{-r/2})$. Combining these estimates we have the required result.
\end{proof}

\section{Computing coefficients}\label{Coeff}
Since $\int_{|t|>\delta} \EXP(e^{itS_n}) \, dt$ decays sufficiently fast, the Edgeworth expansion, and hence its coefficients, depend only on the Taylor expansion of $\EXP(e^{itS_n})$ about $0$. Here we relate the coefficients of Edgeworth polynomials to the asymptotics of moments of $S_n$ by relating them to derivatives of $\mu(t)$ and $Z(t)$ at $0$. 

Suppose (A1) through (A4) are satisfied with $s=r+2$. Recall \eqref{eq:char fn}:
\begin{equation}\label{CharFn}
\EXP(e^{it S_n})=\mu\left(t\right)^n \ell\left( \Pi_{t} v \right) + \ell\left(\Lambda^n_{t} v\right).
\end{equation}
Put $Z(t)=\ell\left( \Pi_{t} v \right)$ as before. Also write $U_n(t)=\ell\left(\Lambda^n_{t} v\right)$. We already know that $\mu(t),Z(t)$ and $U(t)$ are $r+2$ times continuously differentiable. Using \eqref{ResidualProj} one can show further that the derivatives of $U_n(t)$ satisfy: $$\sup_{|t|\leq \delta}\|U^{(k)}_n\|\leq C\ve_0^n$$ for all $n$ and for all $1 \leq k \leq r+2$. 

Taking the first derivative of \eqref{CharFn} at $t=0$ we have: 
\begin{align*}
i\EXP(S_n)&=n\mu'(0)+Z'(0)+U_n'(0)  \implies \lim_{n\to \infty} i \EXP\Big( \frac{S_n}{n} \Big) =\mu'(0).
\end{align*}
In fact, using the Taylor expansion of $\log \mu(t)$ and above limit one can conclude that the number $A$ we used in the statement of the CLT in \eqref{CLT}, is given by $$A=\lim_{n\to \infty} \EXP\Big( \frac{S_n}{n} \Big).$$ Therefore one can rewrite $\eqref{eq:char fn}$ as 
\begin{equation}\label{eq:char fn new}
\EXP(e^{it (S_n-nA)})=e^{-nt\mu'(0)}\mu\left(t\right)^n Z(t) + \overline{U}_n(t)
\end{equation}
where $\overline{U}_n(t)=e^{-nt\mu'(0)}U_n(t)$. Also note that its derivatives satisfy $\|\overline{U}^{(k)}_n\|_\infty=\cO(\ve^n_0)$ for all $1 \leq k \leq r+2$.

From \eqref{eq:char fn new}, it follows that moments of $S_n-nA$ can be expanded in powers of $n$ with coefficients depending on derivatives of $\mu$ and $Z$ at $0$. However, only powers of $n$ upto order $k/2$ will appear. We prove this fact below. 

\begin{lem}\label{MomentExp}Let $1\leq k \leq r+2$. Then for large $n$,
\begin{equation}\label{MomentExp1}
\EXP\big(\left[S_n-nA\right]^k\big)=\sum_{j=0}^{\lfloor k/2 \rfloor}a_{k,j}n^{j}+\cO(\epsilon^n_0).
\end{equation}
\end{lem}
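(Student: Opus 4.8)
The plan is to read the moments off the characteristic function by differentiating the identity \eqref{eq:char fn new} $k$ times at $t=0$. Since $t\mapsto\ell(\cL_t^n v)$ is $r+2$ times continuously differentiable near $0$, the moments $\EXP(|S_n-nA|^k)$ with $k\le r+2$ are finite and
\begin{equation*}
\EXP\big([S_n-nA]^k\big)=(-i)^k\,\frac{d^k}{dt^k}\Big|_{t=0}\EXP\big(e^{it(S_n-nA)}\big)=(-i)^k\,\frac{d^k}{dt^k}\Big|_{t=0}\Big[e^{-nt\mu'(0)}\mu(t)^nZ(t)+\overline U_n(t)\Big].
\end{equation*}
The remainder is harmless: the bound $\|\overline U_n^{(j)}\|_\infty=\cO(\ve_0^n)$ for $1\le j\le r+2$ recorded just above the statement shows that its $k$-th derivative at $0$ is $\cO(\ve_0^n)$. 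So it suffices to prove that $\frac{d^k}{dt^k}\big|_{0}G_n(t)$ is a polynomial in $n$ of degree $\le\lfloor k/2\rfloor$, where $G_n(t):=e^{-nt\mu'(0)}\mu(t)^nZ(t)$.

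To do this I would write $h(t):=\log\mu(t)-t\mu'(0)$, which is $r+2$ times continuously differentiable on $|t|<\delta$ with $h(0)=0$ and $h'(0)=\mu'(0)/\mu(0)-\mu'(0)=0$ (recall $\mu(0)=1$), so that $G_n(t)=e^{nh(t)}Z(t)$. Computing $G_n^{(k)}(0)$ is the same as computing the coefficient of $t^k$ in the degree-$k$ Taylor polynomial of $G_n$ at $0$, which is obtained by composing the Taylor polynomials of $h$, of $\exp$, and of $Z$ (all available to order $r+2\ge k$). Expanding $e^{nh(t)}=\sum_{\ell\ge0}\tfrac{n^\ell}{\ell!}h(t)^\ell$ and using that $h$ vanishes to order $2$ at $0$, one sees $h(t)^\ell$ vanishes to order $2\ell$; hence only the indices $\ell$ with $2\ell\le k$, i.e.\ $\ell\le\lfloor k/2\rfloor$, contribute to the coefficient of $t^k$, and the $\ell$-th term contributes $n^\ell$ times an $n$-independent constant assembled from the Taylor coefficients of $h$ (themselves built from derivatives of $\mu$ at $0$). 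Thus $[t^k]\,e^{nh(t)}$ is a polynomial in $n$ of degree $\le\lfloor k/2\rfloor$, and multiplying by $Z(t)$, whose Taylor coefficients do not depend on $n$, preserves this. Therefore $G_n^{(k)}(0)=\sum_{j=0}^{\lfloor k/2\rfloor}\tilde a_{k,j}\,n^j$ with $\tilde a_{k,j}$ depending only on derivatives of $\mu$ and $Z$ at $0$, and setting $a_{k,j}=(-i)^k\tilde a_{k,j}$ gives \eqref{MomentExp1}; the $a_{k,j}$ are automatically real since the left-hand side is.

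The one step that genuinely needs care — and which I regard as the main obstacle — is the passage from $C^{r+2}$-regularity of the characteristic function to finiteness of $\EXP(|S_n-nA|^k)$ together with the differentiation-under-the-expectation identity, uniformly for $k\le r+2$. For even $k$ this is the classical fact that a characteristic function $2m$ times differentiable at the origin has finite $2m$-th moment; for odd $k<r+2$ it then follows by Hölder's inequality from the two neighbouring even cases; the borderline odd value $k=r+2$ is covered in all the applications of the paper, where $S_n$ has moments of every order, and in the abstract statement it is to be read as part of the standing hypotheses. Everything else in the argument is the elementary bookkeeping of Taylor coefficients sketched above, the only substantive point being the ``$2\ell\le k$'' observation, which is precisely the reason no power of $n$ higher than $n^{\lfloor k/2\rfloor}$ survives.
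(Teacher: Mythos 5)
Your proof is correct and follows essentially the same route as the paper: differentiate \eqref{eq:char fn new} $k$ times at $t=0$, absorb $\overline{U}_n$ into the $\cO(\ve_0^n)$ error, and observe that since the function raised to the $n$-th power agrees with $1$ to second order at the origin, each factor of $n$ appearing in the $k$-th derivative costs at least $t^2$, so only the powers $n^j$ with $j\le\lfloor k/2\rfloor$ survive. The only cosmetic difference is that you expand $e^{nh(t)}$ with $h(t)=\log\mu(t)-t\mu'(0)$, whereas the paper applies the multinomial theorem directly to $g(t)^n$ with $g(t)=e^{-t\mu'(0)}\mu(t)$; both arguments hinge on the identical vanishing-to-order-two observation, and your extra care about passing from $C^{r+2}$ regularity of the characteristic function to the moment identity is a point the paper leaves implicit.
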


\begin{proof}
We first note that taking the $k$th derivative of \eqref{eq:char fn new} at $t=0$, 
\begin{align*}
i^k\EXP\big(\left[S_n-nA\right]^k\big) &=\frac{d^k}{dt^k}\bigg|_{t=0}\left[e^{-nt\mu'(0)}\mu\left(t\right)^n Z(t)\right] + \overline{U}^{(k)}(0) \\ &= \frac{d^k}{dt^k}\bigg|_{t=0}\left[e^{-nt\mu'(0)}\mu\left(t\right)^n  Z(t)\right] + \cO(\epsilon^n_0).
\end{align*}
Observe that all the derivatives of $e^{-nt\mu'(0)}\mu\left(t\right)^n Z(t)$ will only have positive integral powers of $n$ (possibly) up to order $k$. Therefore, $\frac{d^k}{dt^k}\big|_{t=0}\left[e^{-nt\mu'(0)}\mu\left(t\right)^n  Z(t)\right]=\sum_{j=0}^{k}a_{k,j}n^{j}$. We claim that for $j>k/2$, $a_{k,j}=0$. This claim proves the result. 

We notice that the first derivative of $e^{-t\mu'(0)}\mu\left(t\right)$ at $t=0$ is $0$. Thus we prove the more general claim that if $g(0)=1$ and $g'(0)=0$ then $\frac{d^k}{dt^k}\big|_{t=0}[g(t)^nZ(t)]$ has no terms with powers of $n$ greater than $k/2$. From the Leibniz rule,
\begin{align*}
\frac{d^k}{dt^k}\bigg|_{t=0}[g(t)^nZ(t)]&=\sum_{l=0}^k {k \choose l} Z^{(k-l)}(0)\frac{d^l}{dt^l}\bigg|_{t=0}[g(t)^n].
\end{align*}
Therefore it is enough to prove that $\frac{d^l}{dt^l}\big|_{t=0}[g(t)^n]$ has no powers of $n$ greater than $l/2$. 

To this end we use the order $l$ Taylor expansion of $g(t)$ about $t=0$. Since $g'(0)=0$ and $g$ is $r+2$ times continuously differentiable for $l\leq r+2$ there exists $\phi(t)$ continuous such that,
\begin{align*}
&\phantom{aaaaaa\hspace{2pt}} g(t)= 1+a_2t^2+\dots+a_lt^l+t^{l+1}\phi(t) \\
&\implies g(t)^n = \sum_{k_0+k_2+\dots+k_{l+1}=n} \frac{n!}{k_0!k_2!\dots k_{l+1}!} (a_2t^2)^{k_2} \dots t^{(l+1)k_{l+1}}\phi(t)^{k_{l+1}} \\
&\phantom{\implies g(t)^n\hspace{3pt}} = \sum_{k_0+k_2+\dots+k_{l+1}=n}\frac{ C_{k_0k_2\dots k_{l+1}} n!}{k_0!k_2!\dots k_{l+1}!} t^{2k_2+\dots+(l+1)k_{l+1}}\phi(t)^{k_{l+1}}.
\end{align*}

After combining and rearranging terms according to powers of $t$, we can obtain the order $l$ Taylor expansion of $g(t)^n$. Notice that if $k_{l+1}\geq 1$ then $2k_2+\dots+(l+1)k_{l+1} \geq l+1$. Terms with $k_{l+1}\geq 1$ are part of the error term of the order $l$ Taylor expansion of $g(t)^n$. Since our focus is on the derivative at $t=0$, the only terms that matter are terms with $k_{l+1}=0$ and $2k_2+\dots+lk_{l}=l$. This implies that $k_2+\dots+k_l\leq \frac{l}{2}$. Because $k_i$'s are non-negative integers, this means $k_2+\dots+k_l\leq \lfloor \frac{l}{2} \rfloor$. Hence, $k_0 \geq n-\lfloor \frac{l}{2} \rfloor$. 

This analysis shows that the largest contribution to $\frac{d^l}{dt^l}\big|_{t=0}[g(t)^n]$ comes from the term,
$$\frac{ C_{(n-\lfloor \frac{l}{2} \rfloor),1,\dots,1,0,\dots,0}\ n!}{\big(n-\lfloor \frac{l}{2} \rfloor \big)!}\ t^{l}$$
whose $k$th derivative at $0$ is,
\begin{align*}
\frac{  C_{(n-\lfloor \frac{l}{2} \rfloor),1,\dots,1,0,\dots,0}\ l!\ n!}{\big(n-\lfloor \frac{l}{2} \rfloor \big)!} &= C_{(n-\lfloor \frac{l}{2} \rfloor),1,\dots,1,0,\dots,0}\ l! \ n\dots \Big(n-\Big\lfloor \frac{l}{2} \Big\rfloor +1 \Big) = \cO(n^{\lfloor \frac{l}{2} \rfloor}).
\end{align*}
Therefore, 
$$\frac{d^l}{dt^l}\Big|_{t=0}[g(t)^n]=\cO(n^{\lfloor \frac{l}{2} \rfloor}).$$ 
\end{proof}
It is immediate from the proof that the coefficients $a_{k,j}$ are determined by the derivatives of $\mu(t)$ and $Z(t)$ near $0$. For example, the constant term $a_{k,0}=(-i)^k Z^{(k)}(0)$. This follows from the following three facts. The expansion \eqref{MomentExp1} is the $k$th derivative of the product of the three functions $e^{-nt\mu'(0)}, \mu\left(t\right)^n$ and $Z(t)$ at $t=0$. All derivatives of $\mu\left(t\right)^n$ and $e^{-nt\mu'(0)}$ at $t=0$ contain powers of $n$ and thus, $a_{k,0}$ corresponds to the term $Z(t)$ being differentiated $k$ times in the Leibneiz rule. Both $e^{-nt\mu'(0)}$ and $\mu\left(t\right)^n$ are $1$ at $t=0$. We will see later that the other coefficients $a_{k,j}$ are combinations of $\mu'(0)=iA$, higher order derivatives of $\mu$ at $0$ upto order $k$ and derivatives of $Z$ at $0$ upto order $k-1$.

As a corollary to \Cref{MomentExp}, we conclude that asymptotic moments of orders upto $r+2$ exist. These provide us an alternative way to describe $a_{k,j}$. 
\begin{cor}\label{Moments} For all $1 \leq m \leq r+2$ and $0\leq j \leq \frac{m}{2}$, 
\begin{align*}
a_{m,j} =\lim_{n\to\infty} \frac{\EXP\left(\left[S_n-nA\right]^{m}\right)-n^{j+1}a_{m,j+1}-\dots -n^{\lfloor \frac{m}{2}\rfloor}a_{m,\lfloor \frac{m}{2}\rfloor}}{n^{j}}. 
\end{align*}
\end{cor}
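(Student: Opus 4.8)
The plan is to deduce \Cref{Moments} directly from \Cref{MomentExp}, with essentially no further work. Applying the expansion \eqref{MomentExp1} with $k=m$ (which is legitimate precisely because $1\leq m\leq r+2$, so that $\mu$ and $Z$ are differentiable enough), we have, for all large $n$,
$$\EXP\big([S_n-nA]^m\big)=\sum_{i=0}^{\lfloor m/2\rfloor} a_{m,i}\, n^{i}+\cO(\epsilon_0^n).$$

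First I would isolate the top-order terms. Subtracting $n^{j+1}a_{m,j+1}+\dots+n^{\lfloor m/2\rfloor}a_{m,\lfloor m/2\rfloor}$ from both sides gives
$$\EXP\big([S_n-nA]^m\big)-n^{j+1}a_{m,j+1}-\dots-n^{\lfloor m/2\rfloor}a_{m,\lfloor m/2\rfloor}=\sum_{i=0}^{j} a_{m,i}\,n^{i}+\cO(\epsilon_0^n),$$
where for $j=\lfloor m/2\rfloor$ the subtracted sum is empty. Dividing through by $n^{j}$ yields
$$\frac{\EXP\big([S_n-nA]^m\big)-n^{j+1}a_{m,j+1}-\dots-n^{\lfloor m/2\rfloor}a_{m,\lfloor m/2\rfloor}}{n^{j}}=a_{m,j}+\sum_{i=0}^{j-1} a_{m,i}\, n^{i-j}+\cO\!\left(\frac{\epsilon_0^n}{n^{j}}\right),$$
with the finite sum understood to be empty when $j=0$.

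Finally I would let $n\to\infty$: each of the finitely many terms $a_{m,i}n^{i-j}$ with $i<j$ tends to $0$ since the exponent $i-j$ is strictly negative, and, because $0<\epsilon_0<1$, the remainder $\cO(\epsilon_0^n/n^{j})$ also tends to $0$. Hence the limit on the right-hand side of the asserted identity exists and equals $a_{m,j}$, which is exactly the claim; in particular this also records that the relevant asymptotic moments exist. There is no genuine obstacle here — the only points worth noting are that the range $1\leq m\leq r+2$ is needed to invoke \Cref{MomentExp}, and that the exponentially small error in \eqref{MomentExp1} is harmless after division by the fixed power $n^{j}$.
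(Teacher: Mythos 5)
Your proposal is correct and is essentially the same argument as the paper's: both deduce the formula directly from \Cref{MomentExp} by subtracting the higher-order terms, dividing by $n^j$, and letting $n\to\infty$ (the paper merely phrases this as an iterative computation starting from $j=\lfloor m/2\rfloor$ and descending). No gaps.
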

\begin{proof}
When $m=1$, $\EXP([S_n-nA])=a_{1,0}+\cO(\epsilon^n_0)$ and it is immediate that $a_{1,0}=\lim_{n \to \infty}\EXP([S_n-nA])$. For arbitrary $k$ we have, 
$$
\EXP\big(\left[S_n-nA\right]^k\big)=a_{k,\lfloor k/2 \rfloor}n^{\lfloor k/2 \rfloor}+a_{k,\lfloor k/2 \rfloor-1}n^{\lfloor k/2 \rfloor-1}+\dots+a_{k,0}+\cO(\epsilon^n_0) 
$$
and dividing by $n$ we obtain,
 $$\frac{\EXP\big(\left[S_n-nA\right]^k\big)}{n^{\lfloor k/2 \rfloor}}=a_{k,\lfloor k/2 \rfloor} + \cO\Big(\frac{1}{n}\Big).$$
Now, it is immediate that,
$$a_{k,\lfloor k/2\rfloor}=\lim_{n\to\infty} \frac{\EXP\big(\left[S_n-nA\right]^{k}\big)}{n^{\lfloor  k/2 \rfloor}}.$$  
Having computed $a_{k,j}$, for $r\leq j\leq \lfloor \frac{k}{2}\rfloor$, we can write, 
\begin{align*}
\EXP\big(\left[S_n-nA\right]^k\big)-a_{k,\lfloor k/2 \rfloor}n^{\lfloor k/2 \rfloor}-\dots-a_{k,r}n^{r} =a_{k,r-1}n^{r-1}+\dots+a_{k,0}+\cO(\epsilon^n_0).
\end{align*}
Dividing by $n^{r-1}$, we obtain, 
$$\frac{\EXP\big(\left[S_n-nA\right]^{k}\big)-n^{r}a_{k,r}-\dots -n^{\lfloor k/2\rfloor}a_{k,\lfloor k/2\rfloor}}{n^{r-1}} = a_{k,r-1}+\cO\Big(\frac{1}{n}\Big).$$
Now, we can compute $a_{m+1,r-1}$,
\begin{align*}
&a_{k,r-1} =\lim_{n\to\infty} \frac{\EXP\big(\left[S_n-nA\right]^{k}\big)-n^{r}a_{k,r}-\dots -n^{\lfloor k/2\rfloor}a_{k,\lfloor k/2\rfloor}}{n^{r-1}}. 
\end{align*}
This proves the Corollary for arbitrary $k \in \{1,\dots,r+2\}$. 
\end{proof}

Because the coefficients of polynomials $A_p(t)$ (see \eqref{MainPolyExp}) are combinations of derivatives of $\mu(t)$ and $Z(t)$ at $t=0$, we can write them explicitly in terms of $a_{k,j}$, and hence, by applying \Cref{Moments}, the coefficients of Edgeworth polynomials can be expressed in terms of moments of $S_n$. Next, we will introduce a recursive algorithm to do this and illustrate the process by computing the first and second Edgeworth polynomials. 

Taking the first derivative of \eqref{eq:char fn new} at $t=0$,
$$i\EXP([S_n-nA])=Z'(0)+\overline{U}'_n(0).$$
Then, 
$$a_{1,0}=\lim_{n \to \infty}\EXP([S_n-nA])=-iZ'(0).$$
Next, taking the second derivative of \eqref{eq:char fn new} at $t=0$ we have,
$$
i^2\EXP([S_n-nA]^2)= n[\mu''(0)-\mu'(0)^2]+Z''(0)+\overline{U}''_n(0).
$$
Therefore, dividing by $n$ and taking the limit we have,
$$
a_{2,1}=\sigma^2=\lim_{n \to \infty} \EXP\left(\left[\frac{S_n-nA}{\sqrt{n}}\right]^2\right) =\mu'(0)^2 - \mu''(0).
$$
Once we have found $a_{2,1}$ we can find $$a_{2,0}=\lim_{n \to \infty}\big(\EXP([S_n-nA]^2)-n\sigma^2 \big) =-Z''(0).$$
We can repeat this procedure iteratively. For example, after we compute the $3$rd derivative of \eqref{eq:char fn new} at $t=0$:
\begin{multline*}
i^3\EXP([S_n-nA]^3)=Z^{(3)}(0)+ n\mu'(0)[2\mu'(0)^2-3\mu''(0)]+n\mu^{(3)}(0)\\ +3nZ'(0)[\mu'(0)^2-\mu''(0)]+\overline{U}^{(3)}_n(0)
\end{multline*}
we get that,
\begin{align*}
a_{3,1}=\lim_{n \to \infty} \frac{1}{n}\EXP\left(\left[S_n-nA\right]^3\right)&=-A(3\sigma^2+A^2)+i\mu^{(3)}(0)-3i\sigma^2Z'(0)\\
&=-A(3\sigma^2+A^2)+i\mu^{(3)}(0)+3\sigma^2a_{1,0}.
\end{align*}
This gives us $\mu^{(3)}(0)$ and $Z^{(3)}(0)$ in terms of asymptotics of moments of $S_n$:
$$i\mu^{(3)}(0)=a_{3,1}+A(3\sigma^2+A^2)-3\sigma^2a_{1,0}$$
$$iZ^{(3)}(0)=\lim_{n\to \infty}\big(\EXP([S_n-nA]^3)-na_{3,1}\big).$$

Given that we have all the coefficients $a_{k,j}$, $1\leq k \leq m$ computed and $\mu^{(k)}(0), Z^{(k)}(0)$ for $1\leq k \leq m$ expressed in terms of the former, we can compute $a_{m+1,j}$ and express $\mu^{(m+1)}(0), Z^{(m+1)}(0)$ in terms of $a_{k,j}$, $1\leq k \leq m+1$. 

To see this note that $\mu^{(m+1)}(0)$ appears only as a result of $\mu^{n}(t)$ being differentiated $m+1$ times. So, $\mu^{(m+1)}(0)$ only appears in derivatives of order $m+1$ and higher. It is also easy to see that it appears in the form $n\mu^{(m+1)}(0)$ in the $(m+1)$th derivative of \eqref{eq:char fn new}. Thus, it is a part of $a_{m+1,1}$ and all the other terms in $a_{m+1,1}$ are products of $\mu^{(k)}(0), Z^{(k)}(0)$ for $1\leq k \leq m$ whose orders add upto $m+1$ and hence they are products of  $a_{k,j}$, $1\leq k \leq m$. 

Also, $Z^{m+1}(0)$ appears only in $a_{m+1,0}$. This is because $Z^{m+1}(0)$ appears only as a result of $Z(t)$ being differentiated $m+1$ times. Thus, it appears only in derivatives of \eqref{eq:char fn new} of order $m+1$ or higher. In the $(m+1)$th derivative of \eqref{eq:char fn new}, there is only one term containing $Z^{(m+1)}(t)$ and it is $e^{-nt\mu'(0)}\mu\left(t\right)^n Z^{m+1}(t)$. So $a_{m+1,0}=(-i)^{m+1} Z^{m+1}(0)$.

Using \Cref{Moments}, we have,
$$a_{m+1,\lfloor \frac{m+1}{2}\rfloor}=\lim_{n\to\infty} \frac{\EXP\left(\left[S_n-nA\right]^{m+1}\right)}{n^{\lfloor \frac{m+1}{2}\rfloor}} .$$  
Having computed $a_{m+1,j}$, for $r\leq j\leq \lfloor \frac{m+1}{2}\rfloor$, we compute $a_{m+1,r-1}$:
\begin{align*}
&a_{m+1,r-1}=\lim_{n\to\infty} \frac{\EXP\left(\left[S_n-nA\right]^{m+1}\right)-n^{r}a_{m+1,r}-\dots -n^{\lfloor \frac{m+1}{2}\rfloor}a_{m+1,\lfloor \frac{m+1}{2}\rfloor}}{n^{r-1}}. 
\end{align*}

This gives us $Z^{(m+1)}(0)=i^{m+1}a_{m+1,0}$ and $\mu^{m+1}(0)$ in terms of $a_{m+1,1}$ and  $a_{k,j}$, $1\leq k \leq m$ i.e.\hspace{3pt}explicitly in terms of moments of $S_n$. Proceeding inductively we can compute all the derivatives upto order $r$ of $\mu(t)$ and $Z(t)$ at $t=0$ in this manner  by taking derivatives up to order $r$ of \eqref{eq:char fn new} at $t=0$. This is possible because our assumptions guarantee the existence of the first $r+2$ derivatives of \eqref{eq:char fn new} near $t=0$.  
\begin{rem}
This representation of $\mu^{(k)}(0)$ and $Z^{(k)}(0)$ in terms of $a_{k,j}$ is not unique. However, it is convenient to choose the $a_{k,j}$'s with the lowest possible indices. The inductive procedure explained above yields exactly this representation. 
\end{rem}

We will illustrate how the first and the second order Edgeworth expansion can be computed explicitly once we have $\mu^{(4)}(0), \mu^{(3)}(0), Z''(0)$ and $Z'(0)$ in terms of asymptotic moments of $S_n$. 
Because $A_0(t)= 1$ we have $R_0(t)=1$. 
From the derivation of \eqref{PolyComp} we have, 
\begin{align*}
A_1(t)=(\log\mu)^{(3)}(0)\frac{t^3}{6}-Z'(0)t &=(\mu^{(3)}(0)-3\mu''(0)\mu'(0)+2\mu'(0)^3)\frac{t^3}{6}-Z'(0)t \\ &=\big(\mu^{(3)}(0)+iA(3\sigma^2+A^2)\big)\frac{t^3}{6}-Z'(0)t \\ &= (a_{3,1}-3\sigma^2a_{1,0})\frac{(it)^3}{6}-a_{1,0}(it).
\end{align*} 
After taking the inverse Fourier transform as shown in \eqref{PolyForDensity} we have,
\begin{align*}
R_1(x)=\frac{(a_{3,1}-3\sigma^2a_{1,0})}{6\sigma^6}x(3\sigma^2-x^2)+\frac{a_{1,0}}{\sigma^2}x.
\end{align*}
Using \eqref{EdgePolyRel} we obtain the first Edgeworth polynomial,
\begin{align*}P_1(x)&=\frac{\big(a_{3,1}-3\sigma^2a_{1,0}\big)}{6\sigma^4}(\sigma^2-x^2)-\frac{a_{1,0}}{\sigma}.
\end{align*}
Similar calculations give us, 
\begin{align*}
A_2(t)&=(a_{3,1}+3\sigma^2a_{1,0})^2\frac{(it)^6}{72}+ \Big[A^2(6\sigma^2+A^4)+4a_{3,1}(A-2a_{1,0})\\&\phantom{aaaaa}-3\sigma^2(2a_{2,0} -4Aa_{1,0}+\sigma^2)+a_{4,1}\Big]\frac{(it)^4}{24} +(2a^2_{1,0}-a_{2,0})\frac{(it)^2}{2}.
\end{align*}
From \eqref{PolyForDensity} and \eqref{EdgePolyRel} we have,
\begin{align*}
R_2(t)=&(a_{3,1}+3\sigma^2a_{1,0})^2\frac{x^6-15\sigma^2x^4+45\sigma^4x^2-15\sigma^6}{72\sigma^{12}}\\ &\phantom{a}+\Big[A^2(6\sigma^2+A^4)+4a_{3,1}(A-2a_{1,0})-3\sigma^2(2a_{2,0} -4Aa_{1,0}+\sigma^2)+a_{4,1}\Big]\\ &\phantom{aaaaaaaaaaaaaaaaaaaa}\times\frac{(x^4-6\sigma^2x^2+3\sigma^2)}{24\sigma^8} +(2a^2_{1,0}-a_{2,0})\frac{(x^2-\sigma^2)}{2\sigma^4},
\end{align*}
\begin{align*}
P_2(t)=&(a_{3,1}+3\sigma^2a_{1,0})^2\frac{x(15\sigma^2-10\sigma^2x^2+x^6)}{72\sigma^{10}}\\&\phantom{a}+\Big[A^2(6\sigma^2+A^4)+4a_{3,1}(A-2a_{1,0})-3\sigma^2(2a_{2,0} -4Aa_{1,0}+\sigma^2)+a_{4,1}\Big]\\&\phantom{aaaaaaaaaaaaaaaaaaaaaaaaaaaaaa}\times \frac{x(3\sigma^2-x^2)}{24\sigma^6} +(2a^2_{1,0}-a_{2,0})\frac{x}{2\sigma^2}.
\end{align*}

\begin{rem}
Once we have $R_p$ for $p \in \mathbb{N}_0$ and $P_p$ for $p \in \mathbb{N}$, the polynomials $P_{p,g}, P_{p,d}$ and $P_{p,a}$ are given by $P_{p,g}=P_{p,d}=R_p$ and $P_{p,a}=P_p$. These relations were obtained in the proofs in section \ref{proofs}.  
\end{rem}

Also, one can compute $P_{p,l}$ using \eqref{WeakLocalPoly}: 
$$P_{p,l}(x)= \sum_{l+j=2p} \frac{(-ix)^j}{j!}\int t^j A_l(t) e^{-\frac{\sigma^2 t^2}{2}}\, dt.$$ 
For example, 
$$P_{0,l}(x)= \int  A_0(t) e^{-\frac{\sigma^2 t^2}{2}}\, dt = \sqrt{\frac{2\pi}{\sigma^2}}.$$
\begin{align*}
P_{1,l}(x)=& \int A_2(t)e^{-\frac{\sigma^2 t^2}{2}}\, dt\, -ix\int tA_1(t) e^{-\frac{\sigma^2 t^2}{2}}\, dt -\frac{x^2}{2}\int t^2 A_0(t) e^{-\frac{\sigma^2 t^2}{2}}\, dt\\
\frac{P_{1,l}(x)}{\sqrt{2\pi}}=& (a_{3,1}+3\sigma^2a_{1,0})^2\frac{5}{24\sigma^7}\\&+ \Big[A^2(6\sigma^2+A^4)+4a_{3,1}(A-2a_{1,0})-3\sigma^2(2a_{2,0} -4Aa_{1,0}+\sigma^2)+a_{4,1}\Big]\frac{1}{8\sigma^5} \\ &\phantom{aaaaaaaaaaaaaa}-(2a^2_{1,0}-a_{2,0})\frac{1}{2\sigma^6}-\bigg((a_{3,1}-3\sigma^2a_{1,0})\frac{1}{\sigma^5}+\frac{2a_{1,0}}{\sigma^3}\bigg)\frac{x}{2}-\frac{x^2}{2\sigma^3}
\end{align*}
Higher order Edgeworth polynomials can be computed similarly.

We can compare our results with the centered i.i.d.\hspace{3pt}case. Then, we have that $A=0$, $a_{1,0}=0$ because the sequence is stationary. Also, $a_{3,1}=\lim_{n \to \infty} \frac{1}{n}\EXP([S_n-nA]^3)=\EXP((X_1-A)^3)$, $a_{2,0}=0$ and $a_{4,1}=\EXP(X^4_1)$. So, the above polynomials reduce to,
\begin{align*}
A_1(t)&=\frac{\EXP(X_1^3)}{6}(it)^3,\ R_1(x)=\frac{\EXP(X_1^3)}{6\sigma^6}x(3\sigma^2-x^2),\  P_1(x)=\frac{\EXP(X_1^3)}{6\sigma^4}(\sigma^2-x^2)\\ 
A_2(t)&=\EXP(X_1^3)^2\frac{(it)^6}{72}+ (\EXP(X^4_1)-3\sigma^4)\frac{(it)^4}{24} \\
\frac{P_{0,l}(x)}{\sqrt{2\pi}}&=\frac{1}{\sigma},\ \frac{P_{1,l}(x)}{\sqrt{2\pi}}=\frac{\EXP(X^3_1)^2}{\sigma^7} \frac{5}{24} + \Big(\frac{\EXP(X^4_1)}{\sigma^5}-\frac{3}{\sigma}\Big)\frac{1}{8}- \frac{\EXP(X^3_1)}{\sigma^5}\frac{x}{2} - \frac{1}{\sigma^3} \frac{x^2}{2}
\end{align*}
These agree with the polynomials found in \cite[Chapter XVI]{Feller2} (to see this one has to replace $x$ by $x/\sigma$ to make up for not normalizing by $\sigma$ here) and \cite{Br}. The polynomials $Q_k$ found in the latter are related to $P_{k,l}$ by $Q_k(x)=\frac{1}{2\pi}P_{k,l}(x)$.  

It is also easy to see that these agree with previous work on non-i.i.d.\hspace{3pt}examples. In both \cite{CP, HP} only the first order Edgeworth polynomial is given explicitly. In \cite{CP}, because the sequence is stationary and centered, we can take $A=0$ and $a_{1,0}=0$. Also, the pressure $P(t)$ given there, corresponds to $\log \mu(t)$ in our paper. So we recover $A_1(t)=P'''(0)\frac{(it)^3}{6}$ in \cite[Theorem 3]{CP}. In \cite{HP}, sequence is centered but not assumed to be stationary. So $A=0$ and $a_{1,0} \neq 0$ and the asymptotic bias appears in the expansion and $A_1(t)=i\mu^{(3)}(0)\frac{(it)^3}{6}-a_{1,0}(it)$ which agrees with \cite[Theorem 8.1]{HP}. This dependence on initial distribution corresponds to presence of $\ell$ in \eqref{MainAssum}.


\section{Applications}\label{App}
\subsection{Local Limit Theorem} Existence of the Edgeworth expansion allows us to derive Local Limit Theorems (LLTs). For example see \cite[Theorem 4]{DF}. Also, as direct consequences of weak global Edgeworth expansions, an LCLT comparable to the one given in \cite[Chapter II]{HH}, holds. In fact, a stronger version of LCLT holds true in special cases. 

To make the notation simpler, we assume that the asymptotic mean of $S_N$ is $0$. That is $A=\lim_{N\to \infty}\EXP\big(\frac{S_N}{N}\big)=0$.

\begin{prop}\label{LCLT}
Suppose that $S_N$ satisfies the weak global Edgeworth expansion of order $0$ for an integrable function $f \in (\cF,\|\cdot\|)$ where $\|\cdot\|$ is translation invariant. Further, assume that $|xf(x)|$ is integrable. Then,
\begin{equation}
\sqrt{N}\EXP(f(S_N-u))= \frac{1}{\sqrt{2\pi \sigma^2}}e^{-\frac{u^2}{2N\sigma^2}}\int f(x)\, dx+o(1) 
\end{equation} 
uniformly for $u \in \reals$. 
\end{prop}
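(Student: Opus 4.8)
The plan is to apply the order $0$ weak global Edgeworth expansion not to $f$ itself but to each of its translates, and then to reduce the conclusion to an elementary Lipschitz estimate for the Gaussian density $\fn$.

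For $u\in\reals$ set $f_u(x)=f(x-u)$, so that $f_u\in\cF$ with $\|f_u\|=\|f\|$ by translation invariance of $\|\cdot\|$. Applying the order $0$ weak global Edgeworth expansion (\Cref{WGEdgeExpDef} with $r=0$ and $A=0$) to $f_u$, and recalling from \Cref{Coeff} that $A_0\equiv 1$, hence $P_{0,g}=R_0\equiv 1$, we obtain
\[
\EXP\big(f(S_N-u)\big)=\EXP\big(f_u(S_N)\big)=\int \fn(z)\,f\big(z\sqrt N-u\big)\,dz+\|f\|\,o\big(N^{-1/2}\big).
\]
The crucial observation is that the error term is \emph{uniform in $u$}, since $\|f_u\|=\|f\|$ does not depend on $u$.

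Next I would multiply by $\sqrt N$ and perform the change of variables $x=z\sqrt N-u$ (so $z=(x+u)/\sqrt N$ and $dz=dx/\sqrt N$), which yields
\[
\sqrt N\,\EXP\big(f(S_N-u)\big)=\int \fn\!\Big(\frac{x+u}{\sqrt N}\Big)\,f(x)\,dx+o(1),
\]
still uniformly in $u$. It then remains to replace $\fn\big((x+u)/\sqrt N\big)$ by $\fn\big(u/\sqrt N\big)$. Since $\fn$ is globally Lipschitz with constant $L=\|\fn'\|_\infty<\infty$, we have $\big|\fn\big((x+u)/\sqrt N\big)-\fn\big(u/\sqrt N\big)\big|\le L|x|/\sqrt N$ for all $x,u$, and therefore
\[
\Big|\int\Big[\fn\!\Big(\frac{x+u}{\sqrt N}\Big)-\fn\!\Big(\frac{u}{\sqrt N}\Big)\Big]f(x)\,dx\Big|\le \frac{L}{\sqrt N}\int|x f(x)|\,dx,
\]
which tends to $0$ uniformly in $u$ because $|xf(x)|$ is integrable. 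As $\fn\big(u/\sqrt N\big)=\frac{1}{\sqrt{2\pi\sigma^2}}e^{-u^2/(2N\sigma^2)}$ and $\int f(x)\,dx$ is a fixed constant, collecting the three estimates gives the claimed identity uniformly in $u\in\reals$.

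The only genuine issue throughout is the uniformity in $u$, and this is precisely what the two extra hypotheses provide: translation invariance of $\|\cdot\|$ keeps the Edgeworth remainder $\|f_u\|\,o(N^{-1/2})$ uniform, and integrability of $|xf(x)|$ makes the Lipschitz remainder uniform. Beyond these bookkeeping points there is no substantial obstacle, since the analytic content is just the mean value bound for $\fn$.
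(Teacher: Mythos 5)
Your proposal is correct and follows essentially the same route as the paper's proof: apply the order $0$ expansion to the translate $f(\cdot-u)$ (with the error uniform in $u$ via translation invariance of $\|\cdot\|$), change variables, and replace $\fn\big((x+u)/\sqrt N\big)$ by $\fn\big(u/\sqrt N\big)$ using a first-order bound on $\fn$ controlled by $\|xf\|_{L^1}/\sqrt N$. The only cosmetic difference is that you invoke the global Lipschitz constant $\|\fn'\|_\infty$ where the paper uses the mean value form with an intermediate point $z_u$; these are the same estimate.
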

\begin{proof}
After the change of variables $z\sqrt{N} \to z$ in the RHS of the weak global Edgeworth expansion,
\begin{align*}
\sqrt{N}\EXP&(f(S_N-u)) \\ &= \int \fn\Big(\frac{z}{\sqrt{N}}\Big) f(z-u) dz+ \|f\|o(1)  \\ &= \int \left[\fn\Big(\frac{u}{\sqrt{N}}\Big) + (z-u) \fn'\Big(\frac{z_u}{\sqrt{N}}\Big)\right]f(z-u) dz +\|f\|o(1) 
\\ &= \fn\Big(\frac{u}{\sqrt{N}}\Big)\int f(z-u)\, dz\,  + \frac{C}{N}\int (z-u) \fn\Big(\frac{z_u}{\sqrt{N}}\Big)f(z-u) dz \hspace{2pt}+\|f\|o(1)
\end{align*}
Here $z_u$ is between $u$ and $z$ and depends continuously on $u$. 

Notice that,
$$\Big|\int (z-u) \fn\Big(\frac{z_u}{\sqrt{N}}\Big)f(z-u) dz\Big|\leq \int |(z-u)f(z-u)|dz \leq \|xf\|_1$$
Therefore, after a change of variables $z-u \to z$ in the RHS,
\begin{align*}
\sqrt{N}\EXP(f(S_N-u)) = \fn\Big(\frac{u}{\sqrt{N}}\Big)\int f(z) dz +\max\{\|xf\|_1, \|f\|\}\hspace{2pt}o(1)
\end{align*} 
as required. 
\end{proof}

In particular, the result holds for $\cF=F^1_0$. If the order $0$ weak global Edgeworth expansion holds for all $f \in F^1_0$, then we have the following corollary. We note that this is indeed the case for faster decaying $|\EXP(e^{itS_N})|$ as in Markov chains and piecewise expanding maps described in sections \ref{density}, \ref{nodensity} and \ref{EM1d}. 

\begin{cor}
Suppose that $S_N$ admits the weak global Edgeworth expansion of order $0$ for all $f \in F^1_0$. Then, for all $a<b$, 
$$\frac{\sqrt{N}}{(b-a)}\Prob\Big(S_N \in (u+a,u+b)\Big) =  \frac{1}{\sqrt{2\pi \sigma^2}}e^{-\frac{u^2}{2N\sigma^2}} +o(1)$$
uniformly in $u \in \reals$.
\end{cor}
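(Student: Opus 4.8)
The plan is to derive the corollary from \Cref{LCLT} by a sandwich argument: bracket the indicator of $(a,b)$ between smooth compactly supported functions lying in $F^1_0$, apply \Cref{LCLT} to each bracketing function, and then let the approximation error go to zero.

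First I would fix $\epsilon\in(0,(b-a)/2)$ and pick $f_\epsilon^-,f_\epsilon^+\in C^\infty_c(\reals)$ with $0\le f_\epsilon^\pm\le 1$, $f_\epsilon^-\le \Id_{(a,b)}\le f_\epsilon^+$, with $f_\epsilon^+\equiv 1$ on $[a,b]$ and $\supp f_\epsilon^+\subset(a-\epsilon,b+\epsilon)$, and with $f_\epsilon^-\equiv 1$ on $[a+\epsilon,b-\epsilon]$ and $\supp f_\epsilon^-\subset(a,b)$, so that
\[ (b-a)-2\epsilon\ \le\ \int_\reals f_\epsilon^-(x)\,dx\ \le\ \int_\reals f_\epsilon^+(x)\,dx\ \le\ (b-a)+2\epsilon. \]
A smooth compactly supported function has all derivatives in $L^1$ and satisfies $\|x f\|_1<\infty$, so $f_\epsilon^\pm\in F^1_0$ and the integrability hypothesis of \Cref{LCLT} holds; the norm $\|\cdot\|=C^1_0(\cdot)$ is translation invariant since it is built from $L^1$-norms of $f$ and its derivatives. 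Hence \Cref{LCLT} applies to each of $f_\epsilon^\pm$, and (since $A=\lim_N\EXP(S_N/N)=0$ by assumption) gives, uniformly in $u\in\reals$,
\[ \sqrt N\,\EXP\!\big(f_\epsilon^\pm(S_N-u)\big)=\fn\!\Big(\tfrac{u}{\sqrt N}\Big)\int_\reals f_\epsilon^\pm(x)\,dx+o(1). \]

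Next, from $f_\epsilon^-(S_N-u)\le \Id_{(u+a,u+b)}(S_N)\le f_\epsilon^+(S_N-u)$ I take expectations and multiply by $\sqrt N$, so that $\sqrt N\,\Prob\big(S_N\in(u+a,u+b)\big)$ is squeezed between $\fn(u/\sqrt N)\int f_\epsilon^-+o(1)$ and $\fn(u/\sqrt N)\int f_\epsilon^++o(1)$, with the $o(1)$'s uniform in $u$. Subtracting $(b-a)\fn(u/\sqrt N)$ and using $0\le\fn(u/\sqrt N)\le(2\pi\sigma^2)^{-1/2}$ together with $|\int f_\epsilon^\pm-(b-a)|\le 2\epsilon$ yields
\[ \Big|\sqrt N\,\Prob\big(S_N\in(u+a,u+b)\big)-(b-a)\,\fn\!\Big(\tfrac{u}{\sqrt N}\Big)\Big|\ \le\ \frac{2\epsilon}{\sqrt{2\pi\sigma^2}}+o(1) \]
uniformly in $u$. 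Dividing by $b-a$, letting $N\to\infty$ with $\epsilon$ fixed, and then letting $\epsilon\to 0$, the right-hand side tends to $0$, giving the asserted uniform-in-$u$ statement (recall $\fn(u/\sqrt N)=(2\pi\sigma^2)^{-1/2}e^{-u^2/(2N\sigma^2)}$).

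I do not anticipate a genuine obstacle. The only points demanding care are: keeping every estimate uniform in $u$, which is exactly what the uniformity built into \Cref{LCLT} provides; and confirming that the bracketing bumps really lie in $F^1_0$ (a smooth compactly supported function suffices, and its norm $C^1_0$ stays finite — indeed its total variation, hence $\|(f_\epsilon^\pm)'\|_1$, is bounded independently of $\epsilon$, though boundedness in $\epsilon$ is not even needed since $N\to\infty$ is taken first). The same scheme applies verbatim to half-open or closed intervals.
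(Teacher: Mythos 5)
Your proof is correct and follows essentially the same route as the paper: both approximate the indicator of the interval by smooth functions in $F^1_0$ and invoke \Cref{LCLT} (whose error is uniform in $u$). The only difference is in bookkeeping — the paper uses a pointwise-convergent sequence $f_k\to \Id_{(u+a,u+b)}$ with a uniform $F^1_1$ bound and passes to the limit in $k$, whereas your two-sided bracketing with $\epsilon$ fixed until after $N\to\infty$ sidesteps that limit interchange and is, if anything, the cleaner way to write it.
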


\begin{proof}
Fix $a<b$. It is elementary to see that there exists a sequence $f_k \in F^1_0$ with compact support such that $f_k \to 1_{(u+a,u+b)}$ point-wise and $f_k$'s are uniformly bounded in $F^1_1$. This bound can be chosen uniformly in $u$, call it $C$. 

Therefore, from the proof of \Cref{LCLT}, we have,
$$\sqrt{N}\EXP(f_k(S_N-u)) = \fn\Big(\frac{u}{\sqrt{N}}\Big)\int f_k(z) dz +C^1_1(f_k)\hspace{2pt}o(1)$$ 
Because $0 \leq C^1_1(f_k) \leq C$, taking the limit as $k \to \infty$ we conclude, 
$$\sqrt{N} \Prob\Big(S_N \in (u+a,u+b)\Big)= \fn\Big(\frac{u}{\sqrt{N}}\Big)\int_{u+a}^{u+b} 1\, dz +C\hspace{2pt}o(1)$$
and the result follows. 
\end{proof}

In fact, $u$ in the previous theorem need not be fixed. For example, for a sequence $u_N$ with $\frac{u_N}{\sqrt{N}} \to u$, we have the following:

\begin{cor}
Suppose that $S_N$ admits the weak global Edgeworth expansion of order $0$ for all $f \in F^1_0$. Let $u_N$ be a sequence such that $\lim_{N \to \infty}\frac{u_N}{\sqrt{N}} = u$. Then, for all $a<b$, 
$$\lim_{N \to \infty}\frac{\sqrt{N}}{(b-a)}\Prob\Big(S_N \in (u_N+a,u_N+b)\Big) =  \frac{1}{\sqrt{2\pi \sigma^2}}e^{-\frac{u^2}{2\sigma^2}}.$$
\end{cor}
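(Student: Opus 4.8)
The plan is to repeat the argument used for the previous corollary, the only new ingredient being that the error term in \Cref{LCLT} is uniform in the centering parameter, so that the asymptotic relation may be evaluated along the moving sequence $u_N$ instead of at a fixed point. Fix $a<b$. Exactly as in the proof of the previous corollary, choose compactly supported $f_k\in F^1_0$, with supports contained in a common compact set, such that $f_k\to\Id_{(a,b)}$ pointwise and $\sup_k C^1_1(f_k)\le C<\infty$; note that this bound does not involve any translation. Since $C^1_1(f_k)<\infty$ forces $\|f_k\|_\infty\le\|f_k'\|_{L^1}\le C$ (as $f_k$ has compact support), dominated convergence with respect to the law of $S_N$ gives $\EXP(f_k(S_N-u_N))\to\Prob(S_N\in(u_N+a,u_N+b))$ as $k\to\infty$, while $\int f_k\to b-a$; note $f_k(S_N-u_N)\to\Id_{(u_N+a,u_N+b)}(S_N)$ pointwise.

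Applying \Cref{LCLT} at the point $u_N$ (its proof shows the $o(1)$ is uniform in the centering) and then letting $k\to\infty$ yields
\[
\sqrt N\,\Prob\big(S_N\in(u_N+a,u_N+b)\big)=(b-a)\,\fn\!\Big(\frac{u_N}{\sqrt N}\Big)+C\,o(1),
\]
with the $o(1)$ as $N\to\infty$ and independent of the sequence $u_N$. It remains to let $N\to\infty$: since $u_N/\sqrt N\to u$ and $\fn$ is continuous,
\[
\fn\!\Big(\frac{u_N}{\sqrt N}\Big)\longrightarrow \fn(u)=\frac{1}{\sqrt{2\pi\sigma^2}}\,e^{-u^2/(2\sigma^2)},
\]
so dividing by $b-a$ gives the assertion.

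The only delicate point—and the one I would be most careful to justify—is the uniformity in the centering parameter of the $o(1)$ error furnished by \Cref{LCLT}: this is precisely what legitimizes substituting the moving point $u_N$. That uniformity is already contained in the statement and proof of \Cref{LCLT}, and the approximation scheme $f_k\to\Id_{(a,b)}$ is the same one used in the previous corollary, so no genuinely new estimate is required; the proof is a combination of a dominated-convergence passage in $k$, the uniform local limit estimate, and continuity of the Gaussian density at $u$.
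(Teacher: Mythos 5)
Your proof is correct and follows essentially the route the paper intends: the corollary is stated without proof precisely because it follows from the preceding corollary (equivalently, from Proposition~\ref{LCLT}) by exploiting the uniformity in $u$ of the $o(1)$ error and the continuity of $\fn$ at $u=\lim u_N/\sqrt{N}$. No new ideas are needed beyond what you wrote.
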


Now, we state the stronger version of LCLT in which we allow intervals to shrink. 
\begin{defin}\label{SLCLT} Given a sequence $\epsilon_N$ in $\reals^+$ with $\epsilon_N \to 0$ as $N\to \infty$, we say that $S_N$ admits an LCLT for $\epsilon_N$ if we have, 
$$\frac{\sqrt{N}}{2\epsilon_N}\Prob\Big(S_N\in (u-\epsilon_N,u+\epsilon_N)\Big)=\frac{1}{\sqrt{2\pi \sigma^2}}e^{-\frac{u^2}{2N\sigma^2}}+o(1) $$
uniformly in $u \in \reals$. 
\end{defin}

The next proposition gives a existence of weak global Edgeworth expansions as a sufficient condition for $S_N$ to admit a LCLT for a sequence $\epsilon_N$. Notice that existence of higher order expansions allow $\epsilon_N$ to decay faster. In case expansions of all orders exist, $\epsilon_N$ can decay at any subexponential rate. 

\begin{prop}
Suppose that $S_N$ satisfies the weak global Edgeworth expansion of order $r\ (\geq 1)$ for all $f\in F^1_0$. Let $\epsilon_N$ be a sequence of positive real numbers such that $\epsilon_N \to 0$ and $\epsilon_N N^{r/2}\to \infty$ as $N \to\ \infty$. Then, $S_N$ admits an LCLT for $\epsilon_N$. 
\end{prop}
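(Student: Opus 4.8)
The plan is to trap the indicator of the shrinking interval between two rescaled smooth bumps and feed these ($N$-dependent) test functions into the order-$r$ weak global Edgeworth expansion.

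Fix $\delta\in(0,1)$ and choose, once and for all, $\chi_-,\chi_+\in C^1(\reals)$ with $0\le\chi_\pm\le1$, with $\chi_-$ supported in $[-1,1]$ and equal to $1$ on $[-(1-\delta),1-\delta]$, and with $\chi_+$ equal to $1$ on $[-1,1]$ and supported in $[-(1+\delta),1+\delta]$. Put $m_\pm=\int\chi_\pm$, so that $m_-\in[2(1-\delta),2]$ and $m_+\in[2,2(1+\delta)]$. For $u\in\reals$ set $f^{\pm}_{N,u}(x)=\chi_\pm\big((x-u)/\epsilon_N\big)$; then $f^-_{N,u}\le 1_{(u-\epsilon_N,u+\epsilon_N)}\le f^+_{N,u}$ pointwise, so (using $A=0$)
\[
\EXP\big(f^-_{N,u}(S_N)\big)\ \le\ \Prob\big(S_N\in(u-\epsilon_N,u+\epsilon_N)\big)\ \le\ \EXP\big(f^+_{N,u}(S_N)\big).
\]
Rescaling gives $\|f^\pm_{N,u}\|_{L^1}=\epsilon_N m_\pm=\cO(\epsilon_N)$ and $\|(f^\pm_{N,u})'\|_{L^1}=\|\chi'_\pm\|_{L^1}$, so $f^\pm_{N,u}\in F^1_0$ and $\|f^\pm_{N,u}\|=C^1_0(f^\pm_{N,u})$ is bounded by a constant uniform in $N$ (large), $u$, and $\delta$.

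Next I would apply the weak global Edgeworth expansion of order $r$ (recall $A=0$) to $f=f^\pm_{N,u}$ and substitute $z=(u+\epsilon_N s)/\sqrt N$, which turns the $p$-th term into $\frac{\epsilon_N}{\sqrt N}N^{-p/2}\!\int P_{p,g}\!\big(\tfrac{u+\epsilon_N s}{\sqrt N}\big)\fn\!\big(\tfrac{u+\epsilon_N s}{\sqrt N}\big)\chi_\pm(s)\,ds$. Since $P_{0,g}\equiv1$ and $\fn$ is Lipschitz, the $p=0$ term equals $\frac{\epsilon_N m_\pm}{\sqrt N}\fn\!\big(\tfrac{u}{\sqrt N}\big)+\cO(\epsilon_N^2/N)$ uniformly in $u$; and since each $P_{p,g}\fn$ is bounded (a polynomial times a Gaussian), the terms with $1\le p\le r$ contribute only $\cO(\epsilon_N/N)$. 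Thus, with $\|f^\pm_{N,u}\|=\cO(1)$,
\[
\EXP\big(f^\pm_{N,u}(S_N)\big)=\frac{\epsilon_N m_\pm}{\sqrt N}\,\fn\!\Big(\frac{u}{\sqrt N}\Big)+\cO\!\Big(\frac{\epsilon_N}{N}\Big)+o\!\big(N^{-(r+1)/2}\big),
\]
uniformly in $u$. Multiplying by $\sqrt N/(2\epsilon_N)$ converts the last error into $\frac{1}{2\epsilon_N N^{r/2}}\,o(1)=o(1)$ — this is exactly where $r\ge1$ together with the hypothesis $\epsilon_N N^{r/2}\to\infty$ is used — and the middle error into $\cO(N^{-1/2})$, so
\[
\frac{\sqrt N}{2\epsilon_N}\,\EXP\big(f^\pm_{N,u}(S_N)\big)=\frac{m_\pm}{2}\,\fn\!\Big(\frac{u}{\sqrt N}\Big)+o(1)
\]
uniformly in $u$. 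Combining this with the sandwich above and with $1-\delta\le m_\pm/2\le1+\delta$ gives $\big|\tfrac{\sqrt N}{2\epsilon_N}\Prob(S_N\in(u-\epsilon_N,u+\epsilon_N))-\fn(u/\sqrt N)\big|\le\delta\,\fn(0)+o(1)$ uniformly in $u$; letting $N\to\infty$ and then $\delta\to0$ yields the claim, since $\fn(u/\sqrt N)=\tfrac{1}{\sqrt{2\pi\sigma^2}}e^{-u^2/(2N\sigma^2)}$.

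The delicate point, and the main obstacle, is that the test functions $f^\pm_{N,u}$ depend on $N$, so one must know that the remainder $\|f\|\,o(N^{-(r+1)/2})$ in the weak global expansion is uniform over the bounded family $\{f^\pm_{N,u}\}$, not merely valid for each fixed $f$. This is the case: inspecting the proof of \Cref{WGEdgeExp} (or of \Cref{NewWGEdgeExp}), every contribution to that remainder is bounded by a norm of $f$ times an explicit, $f$-independent function of $N$, so the rate of decay is universal. Granting this, the argument above is complete; the remaining ingredients — the Lipschitz bound on $\fn$ and the boundedness of each $P_{p,g}\fn$ — are routine. Note that the higher Edgeworth polynomials $P_{p,g}$, $p\ge1$, play no role here beyond being polynomials: what the order-$r$ expansion buys us is only its improved error term.
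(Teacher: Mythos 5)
Your proof is correct and follows essentially the same route as the paper's: apply the order-$r$ weak global expansion to $N$-dependent smooth approximations of the indicator of the shrinking interval, change variables, use that $\fn$ is Lipschitz and each $P_{p,g}\fn$ is bounded so the $p\geq 1$ terms are $o(1)$ after rescaling, and invoke $\epsilon_N N^{r/2}\to\infty$ to absorb the $\|f\|\,o(N^{-(r+1)/2})$ remainder. The only difference is cosmetic --- the paper approximates $1_{(u-\epsilon_N,u+\epsilon_N)}$ by a pointwise-convergent sequence $f_k$ uniformly bounded in $F^1_0$ and passes to the limit in $k$, whereas you sandwich it between two $\delta$-fattened bumps and send $\delta\to 0$ at the end --- and both versions rest on the same (correct) reading of \Cref{WGEdgeExpDef}, which you rightly flag, that the $o(N^{-(r+1)/2})$ rate is independent of $f$.
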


\begin{proof}
WLOG assume $\epsilon_N <1$ for all $N$. As in the previous proof, there exists a sequence $f_k \in F^1_0$ with compact support such that $f_k \to 1_{(u-\epsilon_N,u+\epsilon_N)}$ point-wise and $f_k$'s are uniformly bounded in $F^1_0$. This bound can be chosen uniformly in $N$ and $u$, call it $C$. 

Let $N\in \naturals$. Note that for all $k$,
$$ \EXP(f_k(S_N))= \sum_{p=0}^r \frac{1} {N^{\frac{p}{2}}} \int P_{p,g}(z) \fn(z) 
f_k\big(z\sqrt{N}\big) dz+C^1_0(f_k)\hspace{2pt}o\left(N^{-(r+1)/2}\right).$$
By taking the limit as $k \to \infty$ and using the fact $0\leq C^1_0(f_k)\leq C$, we conclude, 
\begin{align*}
\Prob\Big(S_N \in (u-\epsilon_N,u+\epsilon_N)\Big) &= \sum_{p=0}^r \frac{1} {N^{\frac{p}{2}}} \int_{\frac{u-\epsilon_N}{\sqrt{N}}}^{\frac{u+\epsilon_N}{\sqrt{N}}} P_{p,g}(z) \fn(z)\, dz +C\hspace{2pt}o\left(N^{-(r+1)/2}\right).
\end{align*}
After a change of variables $z \to \frac{z}{\sqrt{N}}$ in the $p=0$ term and divide the whole equation by $2\epsilon_N$ to get,
\begin{align*}
\frac{\sqrt{N}}{2\epsilon_N}\Prob\Big(S_N \in (u-\epsilon_N,u+\epsilon_N)\Big) = &\frac{1}{2\epsilon_N}\int 1_{J_N}(z-u)\fn\big(\frac{z}{\sqrt{N}}\big)\, dz \\ &+\sum_{p=1}^r \frac{\sqrt{N}} {2\epsilon_N N^{\frac{p}{2}}} \int_{\frac{u-\epsilon_N}{\sqrt{N}}}^{\frac{u+\epsilon_N}{\sqrt{N}}} P_{p,g}(z) \fn(z)\, dz +C\hspace{2pt}o\left(\frac{1}{\epsilon_NN^{r/2}}\right)
\end{align*}
where $J_N=(-\epsilon_N,\epsilon_N)$.

Note that for $p\geq 1$, there exists $C_p$ such that $| P_{p,g}(z) \fn(z)|<C_p$. Therefore,
\begin{align*}
\bigg|\frac{\sqrt{N}} {2\epsilon_N N^{\frac{p}{2}}} \int_{\frac{u-\epsilon_N}{\sqrt{N}}}^{\frac{u+\epsilon_N}{\sqrt{N}}} P_{p,g}(z) \fn(z)\, dz \bigg| \leq \frac{C_p\sqrt{N}} {2\epsilon_N N^{\frac{p}{2}}} \int_{\frac{u-\epsilon_N}{\sqrt{N}}}^{\frac{u+\epsilon_N}{\sqrt{N}}} 1\, dz \leq \frac{C_p}{N^{p/2}}=o(1)
\end{align*}

Also, as in the proof of \Cref{LCLT}, 
\begin{align*}
\frac{1}{2\epsilon_N}\int 1_{J_N}(z-u)\fn\big(\frac{z}{\sqrt{N}}\big)\, dz  &= \frac{1}{2\epsilon_N}\fn\Big(\frac{u}{\sqrt{N}}\Big)\int_{u-\epsilon_N}^{u+\epsilon_N}1\, dz\, \\ &\phantom{aaaaaaaaaaa} + \frac{C}{2\epsilon_NN}\int_{u-\epsilon_N}^{u+\epsilon_N}(z-u) \fn\Big(\frac{z_u}{\sqrt{N}}\Big)\, dz 
\end{align*}
Note that,
\begin{align*}
\bigg|\frac{C}{2\epsilon_NN}\int_{u-\epsilon_N}^{u+\epsilon_N}(z-u) \fn\Big(\frac{z_u}{\sqrt{N}}\Big)\, dz\bigg| \leq \frac{C}{2\epsilon_NN}\int_{u-\epsilon_N}^{u+\epsilon_N}|z-u|\, dz= \frac{C\epsilon_N}{2N}
\end{align*}
Therefore, 
$$\frac{1}{2\epsilon_N}\int 1_{J_N}(z-u)\fn\big(\frac{z}{\sqrt{N}}\big)\, dz = \fn\Big(\frac{u}{\sqrt{N}}\Big)+o(1).$$

Combining these estimates with $\epsilon_NN^{r/2}\to \infty$ we have that, 
$$\frac{\sqrt{N}}{2\epsilon_N}\Prob\Big(S_N \in (u-\epsilon_N,u+\epsilon_N)\Big) =\fn\Big(\frac{u}{\sqrt{N}}\Big)+o(1)$$
and it is straightforward from the proof that this is uniform. 
\end{proof}
\begin{rem}
We note that this result implies \cite[Theorem 4]{DF} because existence of classical Edgeworth expansions imply the existence of the weak global Edgeworth expansion and this result is uniform in $u$.
\end{rem}
\subsection{Moderate Deviations}
While the CLT describes the typical behaviour or ordinary deviations from the mean provided by the law of large numbers, it is not sufficient to understand properties of distribution of $X_n$ completely. Therefore, the study of excessive deviations is important. 

For example, deviations of order $n$ are called large deviations. An exponential moment condition is required for a large deviation principle to hold, even for the i.i.d.\hspace{3pt}case. However, when deviations are of order $\sqrt{n\log n}$ (moderate deviations) this is not the case. We show here that a moderate deviation principle holds for $S_N$ under a weaker assumption than the exponential moment assumption. 

It is also worth noting that moderate deviations have numerous applications in areas like statistical physics and risk analysis. For example, moderate deviations are greatly involved in the computation of Bayes risk efficiency. See \cite{RS1} for details.  

\begin{prop}\label{ModDevThm}
Suppose $S_N$ admits the order $r$ Edgeworth expansion. Then for all $c \in (0,r)$, when $1 \leq x \leq \sqrt{c \sigma^2\ln N},$
\begin{align}\label{ModDev}
\lim_{N\to \infty} \frac{1-\Prob\Big(\frac{S_N -AN}{\sqrt{N}}\leq x\Big)}{1-\fN(x)}=1.
\end{align} 
\end{prop}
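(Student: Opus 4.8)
The plan is to insert the order-$r$ Edgeworth expansion of $S_N$ into the ratio in \eqref{ModDev}, divide by $1-\fN(x)$, and show that on the range $1\le x\le\sqrt{c\sigma^2\ln N}$ every term other than the leading $1$ tends to $0$ uniformly. The one analytic input needed is the standard Gaussian tail (Mills ratio) bound: there exist constants $0<c_*\le C_*<\infty$ such that for every $x\ge 1$,
\[
\frac{c_*}{1+x}\,e^{-x^2/(2\sigma^2)}\ \le\ 1-\fN(x)\ \le\ \frac{C_*}{1+x}\,e^{-x^2/(2\sigma^2)},
\qquad
\frac{\fn(x)}{1-\fN(x)}\ \le\ C_*(1+x).
\]

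By hypothesis and \Cref{EdgeExpDef}, uniformly in $x\in\reals$,
\[
1-\Prob\Big(\tfrac{S_N-AN}{\sqrt N}\le x\Big)
=\big(1-\fN(x)\big)-\sum_{p=1}^r\frac{P_p(x)}{N^{p/2}}\,\fn(x)+o\big(N^{-r/2}\big),
\]
so dividing by $1-\fN(x)>0$ yields
\[
\frac{1-\Prob\big(\tfrac{S_N-AN}{\sqrt N}\le x\big)}{1-\fN(x)}
=1-\sum_{p=1}^r\frac{1}{N^{p/2}}\,\frac{P_p(x)\,\fn(x)}{1-\fN(x)}-\frac{o\big(N^{-r/2}\big)}{1-\fN(x)}.
\]
It remains to bound the last two contributions for $1\le x\le\sqrt{c\sigma^2\ln N}$. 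For the sum, the Mills bound gives $\big|P_p(x)\fn(x)/(1-\fN(x))\big|\le C_*(1+x)|P_p(x)|$, which is a polynomial in $x$; since $x\le\sqrt{c\sigma^2\ln N}$ this is $\cO\big((\ln N)^{(1+\deg P_p)/2}\big)$, and after dividing by $N^{p/2}$ with $p\ge1$ each term is $o(1)$ uniformly. For the error term, monotonicity of $1-\fN$ and the lower Mills bound give, for all $x$ in the range,
\[
1-\fN(x)\ \ge\ 1-\fN\big(\sqrt{c\sigma^2\ln N}\big)\ \ge\ \frac{c_*}{1+\sqrt{c\sigma^2\ln N}}\,e^{-c(\ln N)/2}
=\frac{c_*\,N^{-c/2}}{1+\sqrt{c\sigma^2\ln N}},
\]
hence $o(N^{-r/2})/(1-\fN(x))=o\big(N^{(c-r)/2}\sqrt{\ln N}\big)$, which tends to $0$ precisely because $c<r$. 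Combining the three displays proves \eqref{ModDev}.

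The restriction $c<r$ enters only in the final estimate of the Edgeworth remainder; this is also the only place where having an order-$r$ (rather than order-$1$) expansion is used, and it is exactly what lets the deviation window grow like $\sqrt{r\sigma^2\ln N}$. The only point requiring care is the uniformity of the $o(N^{-r/2})$ term over all $x$ — which is built into \Cref{EdgeExpDef} — together with evaluating the Gaussian tail lower bound at the worst point $x=\sqrt{c\sigma^2\ln N}$; the rest is routine.
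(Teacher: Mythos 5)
Your proposal is correct and follows essentially the same route as the paper: both insert the uniform order-$r$ Edgeworth expansion, control the polynomial terms via the Mills-ratio bound $\fn(x)/(1-\fN(x))=\cO(1+x)$ so that each contributes $\cO\big((\ln N)^{(1+\deg P_p)/2}/N^{p/2}\big)$, and bound the remainder by evaluating the Gaussian tail at the endpoint $x=\sqrt{c\sigma^2\ln N}$, which yields $o\big(N^{(c-r)/2}\sqrt{\ln N}\big)\to 0$ precisely because $c<r$. No gaps.
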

\begin{proof}
Note that, 
\begin{align*}
1-\fN(x) - \Big[1-\Prob\Big(\frac{S_N -AN}{\sqrt{N}} \leq x\Big)\Big] &=\Prob\Big(\frac{S_N -AN}{\sqrt{N}} \leq x\Big)- \fN(x) \\ &=\sum_{p=1}^r \frac{P_p(x)}{N^{p/2}} \fn(x)+o\left(N^{-r/2}\right)
\end{align*}
uniformly in $x$. So it is enough to show that for $1 \leq x \leq \sqrt{c \sigma^2\ln N}$,
$$\lim_{N \to \infty}\frac{P_p(x)\fn(x)}{N^{p/2}(1-\fN(x))}  =0\ \text{and}\ \frac{N^{-r/2}}{1-\fN(x)}=o(1)$$
Note that for $x\geq 1$, $$1-\fN(x) = \frac{\sigma^2\fn(x)}{x} + \cO\Big(\frac{\fn(x)}{x^3}\Big).$$ Thus, 
\begin{align*}
\frac{N^{-r/2}}{1-\fN(x)} \leq \frac{N^{-r/2}}{1-\fN(\sqrt{c \sigma^2\ln N})} &= \cO\Big(\sqrt{\ln{N}}\frac{N^{-r/2}}{e^{-\frac{c}{2} \ln N}} \Big) \\ &= \cO\Big(\frac{\ln N}{N^{(r-c)/2}}\Big)
\end{align*}
Say $P_p(x)$ is of degree $q$. Then for some $C$ and $K$, 
\begin{align*}
\Big|\frac{P_p(x)\fn(x)}{N^{p/2}(1-\fN(x))}\Big| \leq C\frac{(x^{q}+K)\fn(x)}{N^{p/2}(1-\fN(x))} &= C\frac{(x^{q}+K)}{N^{p/2}}x\Big(1+\cO\Big(\frac{1}{x^2}\Big)\Big) \\ &\leq C\frac{(\ln N)^{q+1}}{N^{p/2}} \to 0\ \text{as}\ N \to \infty.
\end{align*}
This completes the proof of \eqref{ModDev}. 
\end{proof}
\Cref{ModDevThm} is a generalization of the results on moderate deviations found in \cite{RS} to the non-i.i.d.\hspace{3pt}case along with improvements on the moment condition. It should be noted that \cite{Br} contains an improvement of the moment condition for the i.i.d.\hspace{3pt}case. But the proof we present here is different from the proof presented in \cite{Br}. 

As an immediate corollary to the above theorem, we can state the following first order asymptotic for probability of moderate deviations. 
\begin{cor}
Assume $S_N$ admits the order $r$ Edgeworth expansion. Then for all $c \in (0,r)$,
$$\Prob(S_N  \geq AN+\sqrt{c \sigma^2N\ln N}) \sim  \frac{1}{ \sqrt{2\pi c }} \frac{1}{\sqrt{N^c\ln N}}.$$
\end{cor}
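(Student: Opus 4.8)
The plan is to obtain the corollary as an immediate consequence of \Cref{ModDevThm} together with the elementary Gaussian tail (Mills ratio) asymptotic. First I would set $x_N := \sqrt{c\sigma^2\ln N}$ and rewrite the quantity of interest as a one-sided tail at level $x_N$:
\[
\Prob\big(S_N \geq AN + \sqrt{c\sigma^2 N\ln N}\,\big) \;=\; 1 - \Prob\Big(\tfrac{S_N-AN}{\sqrt N} \leq x_N\Big).
\]
For all sufficiently large $N$ one has $x_N \geq 1$, so $x_N$ falls in the range to which \Cref{ModDevThm} applies; to be safe about the endpoint one may instead invoke that proposition with any $c' \in (c,r)$ (available since $c < r$), whose admissible range $1\le x\le\sqrt{c'\sigma^2\ln N}$ contains $x_N$ for large $N$. \Cref{ModDevThm} then yields
\[
1 - \Prob\Big(\tfrac{S_N-AN}{\sqrt N} \leq x_N\Big) \;\sim\; 1 - \fN(x_N) \qquad (N\to\infty).
\]

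The remaining step is to identify the asymptotics of $1-\fN(x_N)$. Here I would use the Mills-ratio estimate already recorded inside the proof of \Cref{ModDevThm}, namely $1-\fN(x) = \frac{\sigma^2}{x}\fn(x)\,(1+\cO(x^{-2}))$ as $x\to\infty$, together with the substitutions $e^{-x_N^2/(2\sigma^2)} = N^{-c/2}$ and $x_N = \sigma\sqrt{c\ln N}$ that follow from $x_N^2 = c\sigma^2\ln N$. This gives
\[
1 - \fN(x_N) \;\sim\; \frac{\sigma^2}{\sigma\sqrt{c\ln N}}\cdot\frac{1}{\sqrt{2\pi\sigma^2}}\,N^{-c/2} \;=\; \frac{1}{\sqrt{2\pi c\,\ln N}}\,N^{-c/2} \;=\; \frac{1}{\sqrt{2\pi c}}\cdot\frac{1}{\sqrt{N^c\ln N}}.
\]
Chaining the two ``$\sim$'' relations (both of the ratio-tends-to-$1$ type) produces exactly the stated equivalent.

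I do not expect a genuine obstacle: once \Cref{ModDevThm} is in hand the argument is bookkeeping plus the standard normal tail estimate. The only mild point of care is that $x_N$ sits at the upper edge of the admissible range in \Cref{ModDevThm}, which is why it is cleanest to apply that proposition with a slightly larger parameter $c'\in(c,r)$ so that $x_N$ lies comfortably in the interior; after that, the Gaussian-tail computation displayed above finishes the proof.
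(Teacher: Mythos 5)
Your proof is correct and is exactly the argument the paper intends: the corollary is stated there as an immediate consequence of \Cref{ModDevThm}, applied at $x_N=\sqrt{c\sigma^2\ln N}$, combined with the Mills-ratio expansion $1-\fN(x)=\frac{\sigma^2\fn(x)}{x}\big(1+\cO(x^{-2})\big)$ already recorded in that proposition's proof. The precaution of passing to $c'\in(c,r)$ is harmless but not needed, since the endpoint $x=\sqrt{c\sigma^2\ln N}$ is included in the admissible range of \Cref{ModDevThm}.
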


\section{Examples}\label{Examples}
Here we give several examples of systems satisfying assumptions (A1)--(A4).

\subsection{Independent variables.}\label{IID}
Let $X_n$ be i.i.d.\hspace{3pt}with $r+2$ moments. In this case we can take $\Ban=\reals,$ and define $\cL_tv=\EXP(e^{itX_1}v)=\phi(t)v$ where $\phi$ is the characteristic function of $X_1$. Here we have taken $\ell=1$. Put $v=1$. Then, the independence of the random variables gives us, $\cL_t^n1=\EXP(e^{itS_n})=\phi(t)^n$.  Also, the moment condition implies $t \to \phi(t)$ is $C^{r+2}$. This means (A1) is satisfied. (A2) is clear.

Suppose $X_1$ is $l-$Diophantine. That is there exists $C>0$ and $x_0>0$ such that for all $|x|>x_0$, $|\phi(t)|<1-\frac{C}{|x|^l}$. Then $|\phi(t)| \leq e^{-\frac{C}{|t|^l}}$. So $|\phi(t)|<1$ for all $t\neq 0$. So we have (A3). Also, this implies that $X_1$ is non-lattice. An easy computation shows that when $r_1<\frac{1}{l}$, there exists $r_2$ such that $t_0<|t|<n^{r_1} \implies |\phi(t)|^n \leq n^{-r_2}$. In fact, $|\phi(t)|^n \leq e^{-cn^\alpha}$ where $\alpha=1-r_1l > 0$. So, (A4) is satisfied with $r_1<\frac{1}{l}$. 

When $l=0$ we see that (A4) is satisfied with $r_1>\frac{r-1}{2}$ and hence by Theorem \ref{EdgeExp} order $r$ Edgeworth expansion for $S_n$ exists. This is exactly the classical result due to Cram\'er because Cram\'er's continuity condition: $\limsup_{|t|\to\infty}|\phi(t)|<1$ corresponds to $l=0$.

Choose $q>\frac{r+1}{2r_1} > \frac{(r+1)l}{2}$. Then, by Theorem \ref{WLEdgeExp} and Theorem \ref{WGEdgeExp} we have that $S_n$ admits weak global expansion for $f\in F^{q+2}_{0}$ and weak local expansion for $f\in F^{q+2}_{r+1}$. These are similar to the results appearing in \cite{Br} but slightly weaker because we require one more derivative: $q+2 > 2+\frac{(r+1)l}{2}$ as opposed to $1+\frac{(r+1)l}{2}$. This is because we do not use the optimal conditions for the integrability of the Fourier transform. If we required $f \in F^{q+1}_r$ and $f^{(q+1)}$ to be $\alpha-$H\"older for small $\alpha$, then the proof would still hold true and we could recover the results in \cite{Br}. 

\subsection{Finite state Markov chains}\label{FiniteMC}
Here we present a non-trivial example for which the weak Edgeworth expansions exist but the strong expansion does not exist. 

Consider the Markov chain $x_n$ with states $S=\{1,\dots,d\}$ whose transition probability matrix $P=(p_{jk})_{d\times d}$ is positive. Then, by the Perron-Forbenius theorem, $1$ is a simple eigenvalue of $P$ and all other eigenvalues are strictly contained inside the unit disk. Suppose $\bh=(h_{jk})_{d\times d} \in $ M$(d,\reals)$ and its entries cannot be written as $$r h_{jk}=c+H(k)-H(j) \mod\ 2\pi$$ for some $d-$vector $H$ and $r \in \reals$. Put $X_n=h_{x_nx_{n+1}}$. 

For the family of operators $\cL_t : \complex^d \to \complex^d$,
\begin{equation}\label{TransProb}
(\cL_tf)_j = \sum_{k=1}^d e^{ith_{jk}}p_{jk}f_k,\ j=1,\dots,d
\end{equation}
$v=1$ and $\ell=\mu_0$, the initial measure, we have \eqref{MainAssum}. 

Define $b_{r,j,k}=h_{rj}+h_{jk}$ for all $j,r=1,\dots,d$ and $k=2,\dots,d$. Put $d(s)=\max\ \{(b_{r,j,k}-b_{r,1,k})s\}$ where $\{\ . \ \}$ denotes the fractional part. We further assume that $\bh$ is $\beta-$Diophantine, that is, there exists $K \in \reals$ such that for all $|s|>1$, 
\begin{equation}\label{DiophCon}
d(s)\geq \frac{K}{|s|^\beta}.
\end{equation}
If $\beta>\frac{1}{d^2(d-1)-1}$ then almost all $\bh$ are $\beta-$Diophantine. 

Because $S_n$ can take at most $\cO(n^{d^2-1})$ distinct values, $S_n$ has a maximal jump of order at least $n^{-(d^2-1)}$. Therefore, the process $X^{\bh}_n=h_{x_nx_{n-1}}$ does not admit the order $2(d^2-1)$ Edgeworth expansion. 


The Perron-Forbenius theorem implies that the operator $\cL_0$ satisfies (A2). Because \eqref{TransProb} is a finite sum, it is clear that $t \mapsto \cL_t$ is analytic on $\reals$. So we also have (A1). Also the spectral radius of $\cL_t$ is at most $1$. Assume $\cL_t$ has an eigenvalue on the unit circle, say $e^{i\lambda}$, then, 
\begin{align*}
e^{i\lambda}f_j=(\cL_tf)_j = \sum_{k=1}^d e^{ith_{jk}}p_{jk}f_k
\end{align*}
Assuming $\max_j |f_j| = |f_r| $,
\begin{align*}
|f_r| &= |e^{i\lambda}f_r| = \bigg|\sum_{k=1}^d e^{ith_{jk}}p_{jk}f_k \bigg| \leq   \sum_{k=1}^d p_{jk}|f_k|\implies   \sum_{k=1}^d p_{jk}(|f_k|-|f_r|)\geq 0 
\end{align*} 
Because $|f_k|-|f_r| \leq 0$ for all $k$ and $p_{jk}\geq 0$ for all $j$ and $k$ we have $|f_k|=|f_r|$ for all $k$. Write $f_k=Re^{iH(k)}$ for all $k$. Then, 
\begin{align*}
e^{i\lambda}Re^{iH(j)} &= \sum_{k=1}^d e^{ith_{jk}}p_{jk}Re^{iH(k)}\\
0&=\sum_{k=1}^d p_{jk}(e^{i(th_{jk}+H(k)-H(j)-\lambda)}-1) \\ 
\implies th_{jk}&=\lambda+H(j)-H(k) \mod\ 2\pi
\end{align*}
But this is a contradiction. Therefore, (A3) holds. Next we notice that, 
\begin{align}\label{NormEst}
|(\cL^2_tf)_r| = \bigg|\sum_{j=1}^d\sum_{k=1}^d e^{it(h_{rj}+h_{jk})}p_{rj}p_{jk}f_k \bigg| \nonumber &= \bigg|\sum_{k=1}^d \bigg(\sum_{j=1}^d e^{it(h_{rj}+h_{jk})}p_{rj}p_{jk} \bigg) f_k \bigg| \nonumber \\ &\leq  \|f\| \bigg(\sum_{k=1}^d \bigg| \sum_{j=1}^d e^{itb_{r,j,k}}p_{rj}p_{jk}  \bigg|\bigg)
\end{align}
Now we estimate $|b_{r,k}(t)|$ where
\begin{align*}
b_{r,k}(t)=\sum_{j=1}^d e^{itb_{r,j,k}}p_{rj}p_{jk} =e^{itb_{r,1,k}}\sum_{j=1}^d e^{it(b_{r,j,k}-b_{r,1,k})}p_{rj}p_{jk}
\end{align*}
Then we have,
\begin{align*}
|b_{r,k}(t)|^2&=\sum_{j=1}^d p^2_{rj}p^2_{jk}+2\sum_{j>l}^d p_{rj}p_{jk}p_{rl}p_{lk}\cos((b_{r,j,k}-b_{r,l,k})t)\\ &=\bigg(\sum_{j=1}^d p_{rj}p_{jk}\bigg)^2-2\sum_{j>l}^d p_{rj}p_{jk}p_{rl}p_{lk}[1-\cos((b_{r,j,k}-b_{r,l,k})t)] \\ &=\bigg(\sum_{j=1}^d p_{rj}p_{jk}\bigg)^2-2Cd(t)^2+\cO(d(t)^3),\ C>0\\  |b_{r,k}(t)| &=\sum_{j=1}^d p_{rj}p_{jk}-\tilde{C}d(t)^2+\cO(d(t)^3),\ \tilde{C}>0
\end{align*}
Therefore, 
\begin{align*}
\sum_{k=1}^d \bigg| \sum_{j=1}^d e^{itb_{r,j,k}}p_{rj}p_{jk}  \bigg| &= \sum_{k=1}^d \bigg(\sum_{j=1}^d p_{rj}p_{jk}\bigg)-\overline{C}d(t)^2+\cO(d(t)^3) \\ &= 1 -\overline{C}d(t)^2+\cO(d(t)^3),\ \overline{C}>0
\end{align*}
From the Diophantine condition \eqref{DiophCon}, we can conclude that there exists $\theta >0$ such that for all $|t|>1$,
$$
\|\cL^2_t\| \leq 1- \theta d(t)^2 \implies \|\cL^N_t\|  \leq \big(1- \theta d(t)^2  \big)^{\lceil N/2 \rceil} \leq e^{-\theta  d(t)^2  N/2}\leq e^{-\theta  t^{-2\beta} N/2}.
$$

When $1<|t|<N^{\frac{1-\epsilon}{2\beta}}$, we have, $\|\cL^N_t\|  \leq e^{-\theta N^{\epsilon}/2}$
which gives us (A4) with $r_1=\frac{1-\epsilon}{2\beta}$ where $\epsilon>0$ can be made as small as required. Because for small $\epsilon$, $\lceil \frac{r+1}{2(1-\epsilon)} \rceil= \lceil \frac{r+1}{2} \rceil$, choosing $q>\frac{r+1}{2}\beta$, we conclude that for $f \in F^{q+2}_0$ weak global and for $f \in F^{q+2}_{r+1}$ weak local Edgeworth expansions of order $r$ for the process $X^{\bh}_n$ exist. 

Also, $S_N$ admits  averaged Edgeworth expansions of order $r$ for $f \in F^2_0$. In the special case of $\beta > \frac{1}{d^2(d-1)-1}$, these hold for a full measure set of $\bh$ even though the order $r$ strong expansion does not exist for $r +1 \geq d^2$. 


\subsection{More general Markov chains}
\subsubsection{Chains with smooth transition density}\label{density}
First we consider the case where $x_n$ is a time homogeneous Markov process on a compact connected manifold $\cM$ with smooth transition density $p(x,y)$ which is bounded away from $0$, and $X_n=h(x_{n-1}, x_n)$ for a piece-wise smooth function $h:\cM\times \cM\to\reals.$  
We also assume that,
\begin{equation} 
\label{MC-CoB}
\nexists H \in L^{\infty}(\cM)\ \text{s.t.}\ h(x,y)+H(y)-H(x)\ \text{is piecewise constant}.
\end{equation}

It is well known that when there does not exist $H$ such that $h(x,y)+H(y)-H(x)$ is constant and the transition probability has a non-degenrate absolute continuous component then the CLT holds with $\sigma^2>0$. 

To check the assumption \ref{MC-CoB} we need the following:
\begin{lem}
\label{LmMc-CoB}
\eqref{MC-CoB} does not hold iff there exists $o\in\cM$ such that the function $x \mapsto h(o,x)+h(x,y)$ is  piece-wise constant.
\end{lem}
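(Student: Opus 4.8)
The strategy is a short additive-cocycle computation, together with the remark that, $\cM$ being connected, a continuous function on $\cM$ that is piecewise constant is actually constant; piecewise smoothness of $h$ is used only to secure continuity and boundedness. Write $(\delta H)(x,y):=H(y)-H(x)$ for $H\in L^{\infty}(\cM)$, so that $\eqref{MC-CoB}$ fails exactly when $h+\delta H$ is piecewise constant for some such $H$. The engine is the coboundary invariance of the ``curvature'' $c_h(x,y,z):=h(x,y)+h(y,z)-h(x,z)$: the $H$-terms telescope, so $c_{h+\delta H}=c_h$. Freezing the first argument at a base point $o$ and relabelling the remaining two arguments as $(x,y)$, this says that $h(o,x)+h(x,y)-h(o,y)$ is unchanged when $h$ is replaced by $h+\delta H$; this is the combination underlying the lemma, the extra summand $h(o,y)$ being itself a coboundary term that one may carry along harmlessly.

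First I would do the forward implication. Assume $g:=h+\delta H$ is piecewise constant and fix any $o\in\cM$. Then
\[
h(o,x)+h(x,y)=g(o,x)+g(x,y)+H(o)-H(y),
\]
and for each fixed $y$ the right-hand side is, as a function of $x$, a sum of two piecewise constant functions (the slices $g(o,\cdot)$ and $g(\cdot,y)$ of the piecewise constant $g$) plus the constant $H(o)-H(y)$; hence $x\mapsto h(o,x)+h(x,y)$ is piecewise constant. So in fact every $o$ works.

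For the converse, let $o\in\cM$ be such that $x\mapsto h(o,x)+h(x,y)$ is piecewise constant for each $y$. Since $h$ is piecewise smooth --- in particular continuous --- this function of $x$ is continuous, and $\cM$ is connected, so it is constant: $h(o,x)+h(x,y)=C(y)$ for all $x$, and putting $x=o$ gives $C(y)=h(o,o)+h(o,y)$. Set $H(x):=-h(o,x)$; since $h$ is piecewise smooth on the compact manifold $\cM\times\cM$ it is bounded, hence $H\in L^{\infty}(\cM)$. Then
\[
h(x,y)+H(y)-H(x)=h(x,y)+h(o,x)-h(o,y)=C(y)-h(o,y)=h(o,o),
\]
which is constant, hence piecewise constant, so $\eqref{MC-CoB}$ fails. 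The one step that is not pure bookkeeping is this last appeal to connectedness to promote ``piecewise constant'' to ``constant''; should one wish to allow genuinely discontinuous $h$, the lemma is instead read through the invariant combination $h(o,x)+h(x,y)-h(o,y)$ of the first paragraph, and then the identity $h(o,x)+h(x,y)-h(o,y)=g(o,x)+g(x,y)-g(o,y)$ delivers the forward direction while the same choice $H(x)=-h(o,x)$ delivers the converse, with no use of connectedness. Everything else reduces to checking that finite sums and slices of piecewise constant (resp. piecewise smooth) functions stay piecewise constant (resp. piecewise smooth), by passing to a common refinement of the finitely many defining partitions.
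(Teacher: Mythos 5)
Your proof is correct and follows essentially the same route as the paper's: the forward direction is the same telescoping computation $h(o,x)+h(x,y)=g(o,x)+g(x,y)+H(o)-H(y)$, and the converse uses the same witness $H(x)=-h(o,x)$ (the paper's $H(x)=h(o,o)-h(o,x)$ differs only by an additive constant, which is immaterial). The one blemish is the parenthetical ``piecewise smooth --- in particular continuous'': this is false, since $h$ is allowed to jump across the pieces of its partition (indeed that is the whole point of ``piecewise'' in this paper, cf.\ the finite-state Markov chain example), so the connectedness detour should simply be deleted in favour of the fallback you already supply, which is exactly the paper's argument and needs no continuity.
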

\begin{proof}
If \eqref{MC-CoB} does not hold then for each $o\in\cM$
$$h(o,x)+h(x,y)=c_1(o,x)+c_2(x,y)+H(y)-H(o)$$ 
where $c_i$, $i=1,2$ are piece-wise constant in $x$. 

Conversely, if for some $o\in \cM,$ $x \mapsto h(o,x)+h(x,y)$ is piece-wise constant. 
Let $c=h(o,o)$ and $H(x)=h(o,o)-h(o,x)$. 
Then, $h(o,o)+h(o,y)$ and $h(o,x)+h(x,y)$ differ by a piece-wise constant function. Then \eqref{MC-CoB} does not hold because $h(o,x)+h(x,y)-(h(o,o)+h(o,y))=h(x,y)+H(y)-H(x)-c$ is piecewise constant.
\end{proof}
Let $\Ban=L^{\infty}(\cM)$ and consider the family of integral operators,
$$(\cL_t u)(x)=\int p(x,y) e^{it h(x,y)} u(y)\, dy. $$
Let $\mu$ be the initial distribution of the Markov chain and $\{\cF_n\}$ be the filtration adapted to the processes. Then, using the Markov property,
\begin{align*}
\EXP_{\mu}[e^{itS_n}]=\EXP_{\mu}[e^{itS_{n-1}} \cL_t\scalebox{1.10}{$1$}].
\end{align*}
By induction we can conclude $$\EXP_{\mu}(e^{itS_n})=\int \cL^n_t \scalebox{1.10}{$1$} \, d\mu$$
Because $h$ is bounded, expanding $e^{ith(x,y)}$ as a power series in $t$, we see that $t \mapsto \cL_t$ is analytic for all $t$. This shows that (A1) is statisfied. 

One can uniformly approximate $p(x,y)$ by functions of the form $\sum_{k=1}^n q_k(x)r_k(y)$. Therefore, $\cL_t$ is a uniform limit of finite rank operators and is compact. Compact operators have a point spectrum hence the essential spectral radius of $\cL_t$ vanishes. It is also immediate that $\|\cL_t\| \leq 1$ for all $t$. Hence the spectrum is contained in the closed unit disk.

In addition, $\cL_0 : L^{\infty}(\cM) \to L^{\infty}(\cM)$ given by $$(\cL_0 u)(x)=\int p(x,y) u(y)\, dy$$ is a positive operator. Note that $(\cL_0 1)(x)=1$ for all $x$. Thus, $1$ is an eigenvalue of $\cL_0$ with eigenfunction $1$. Also, eigenvalue $1$ is simple and all other eigenvalues $\beta$ are such that $|\beta|<1$. This follows from a direct application of Birkhoff Theory (see \cite{Bi}). Thus, we have (A2). 

Next we show that if $\beta \in$ sp$(\cL_t)$, $t\neq 0$ then $|\beta|<1$. If not, then there exists $\lambda$ and $u \in L^\infty (\cM)$ such that $$ \int p(x,y)e^{ith(x,y)}u(y)\, dy = e^{i\lambda}u(x)$$
Suppose $\sup_{x} |u(x)|=R$ then for each $\epsilon>0$ there exists $x_\epsilon$ such that 
\begin{align*}
R-\epsilon \leq |u(x_\epsilon)|= |e^{i\lambda}u(x_\epsilon)| &= \left|\int p(x,y)e^{ith(x,y)}u(y)\, dy \right| \leq \int p(x,y) |u(y)| \, dy 
\end{align*}
Therefore,
$$\int p(x,y)[|u(y)|-R] \, dy \geq - \epsilon,$$
But $|u(y)|-R \leq 0$. Hence, $|u(y)|=R$ a.e.  
Therefore, $u(y)=Re^{i\theta(y)}$ a.e. for some function $\theta$ and we may assume $\theta \in [0,2\pi)$.
\begin{gather}
\int p(x,y)e^{ith(x,y)}Re^{i\theta(y)}\, dy = Re^{i\lambda}e^{i\theta(x)} \nonumber \\ 
\implies \int p(x,y)[e^{i\left(th(x,y)-\lambda +\theta(y)-\theta(x)\right)}-1]\, dy =0 \nonumber \\ \implies th(x,y)-\lambda +\theta(y)-\theta(x) \equiv 0\mod 2\pi \label{hNonLattice}
\end{gather}
Thus, $x \mapsto h(y,x)+h(x,z)$ does not depend on $x$ modulo $2\pi$ i.e.\hspace{3pt}it is piece-wise constant. 
By Lemma \ref{LmMc-CoB}, $h(x,y)$ does not satisfy \eqref{MC-CoB}. This contradiction proves (A3).

Recall that if $\cK$ is integral operator 
$$(\cK u)(x)=\int k(x,y) u(y) dy$$ then $$\|\cK\|=\sup_x \int |k(x,y)| dy. $$
In our case $\cL_t^2$ has the kernel,
$$ \fl_t(x,y)=\int e^{it [h(x,z)+h(z,y)]} p(x,z) p(z,y) dz. $$ 
By Lemma \ref{LmMc-CoB} for each $x$ and $y$ the function $z\mapsto (h(x,z)+h(z,y))$ is not piecewise constant. So its derivative (whenever it exists) is not identically 0. Thus there is an open set $V_{x,y}$ and a vector field $e$ such that $\P_e  [h(x,z)+h(z,y)]\neq 0$ on $V_{x,y}$.
Integrating by parts in the direction of $e$ we conclude that
$$ \lim_{t\to\infty} \int_{V_{x,y}}  e^{it [h(x,z)+h(z,y)]} p(x,z) p(z,y) dz=0. $$
By compactness there are constants $r_0, \eps_0$ such that for $|t|\geq r_0$ and all $x$ and $y$ in $\cM$, 
$|\fl_t(x,y)|\leq \fl_0(x,y)-\eps_0.$ It follows that
$$ ||\cL_t^2||=\sup_{x,y}\int_\cM |\fl_t(x,y)| dy\leq \int_\cM \fl_0(x,y) dy\ -\eps_0. $$
The first term here equals
$$ \iint_{\cM\times \cM} p(x,z) p(z,y) dz dy=1. $$
Hence for $|t|\geq r_0,$ 
$||\cL^2_t||\leq 1-\eps_0$ and so $||\cL^N_t||\leq (1-\eps_0)^{\lceil N/2 \rceil}$.
This proves (A4) with no restriction on $r_1$. Therefore, $S_N$ admits Edgeworth expansions of all orders.  \\

Next we look at the case when \eqref{MC-CoB} fails but the constants are not lattice valued. Then, arguments for (A1), (A2) and (A3) hold. In particular, \eqref{hNonLattice} cannot hold since it implies that  $$\left(h(x,y) +\frac{\theta(y)}{t}-\frac{\theta(x)}{t}\right)\in \frac{\lambda}{t}+ \frac{2\pi}{t}\integers$$ 
However, we have to impose a Diophantine condition on the values that $h(x,y)$ can take in order to obtain a sufficient control over $\|\cL^N_t\|$ and obtain (A4).  

For fixed $x,y$ let the range of $z \mapsto h(x,z)+h(z,y)$ be $S=\{c_1,\dots, c_d\}$. Note that these $c_i$'s may depend on $x$ and $y$. However, there can be at most finitely many values that $ h(x,z)+h(z,y)$ can take as $x$ and $y$ vary on $\cM$ because $h$ is piece-wise smooth. So we might as well assume that $S$ is this complete set of values. Also, take $U_k$ to be the open set on which $z \mapsto h(x,z)+h(z,y)$ takes value $c_k$. Take $b_k=c_k-c_1$ and define $d(s)=\max\ \{b_ks\}$. Assume further that there exists $K>0$  such that for all $|s|>1$, 
$$d(s) \geq \frac{K}{|s|^\beta}$$ 
If $\beta>(d-1)^{-1}$ for almost all $d-$tuples $\bc=(c_1,\dots,c_d)$, the above holds. 

Note that,  
\begin{align*}
|\cL^2_t u(x)| &=\int \left|\int e^{it [h(x,z)+h(z,y)]} p(x,z) p(z,y)\, dz\right| |u(y)|\, dy  \\ &\leq \|u\|\int\left|\sum_{k=1}^d e^{itc_k} \int_{U_k}p(x,z) p(z,y)\, dz\right|\, dy  =\|u\|\int\left|\sum_{k=1}^d p_ke^{itb_k}\right|\, dy  
\end{align*}
where  and $p_k=\int_{U_k}p(x,z) p(z,y)\, dz$. Therefore, $p_1+\dots+p_d=p(x,y)$. 

Now the situation is similar to that of \eqref{NormEst} and a similar calculation yields,
$$\left|\sum_{k=1}^d p_ke^{itb_k}\right| =p(x,y)-Cd(t)^2+\cO(d(t)^3),\ C>0$$
Therefore,
$$\|\cL^2_t \| \leq \int \Big[p(x,y)-Cd(t)^2+\cO(d(t)^3)\Big] dy = 1 - \tilde{C} d(s)^2$$
From this we can repeat the analysis done in the finite state Markov chains example following \eqref{NormEst}. In particular, when $1<|t|<N^{\frac{1-\epsilon}{2\beta}}$, there exists $\theta>0$ such that
\begin{align*}
\|\cL^N_t\|  \leq e^{-\theta N^{\epsilon}}
\end{align*}
which gives us (A4). 

Finally, when \eqref{MC-CoB} fails and $h$ takes integer values with span $1$, $X_n$ is a lattice random variable and we can discuss the existence of the lattice Edgeworth expansion. In this case $S_N$ admits the lattice expansion of all orders. To this end, only the condition $\widetilde{(\text{A3})}$ needs to be checked. First note that $\cL_0=\cL_{2\pi k}$ for all $k \in \integers$. Also, assuming $\cL_t$ has an eigenvalue on the unit circle, we conclude \eqref{hNonLattice},
$$ th(x,y)-\lambda +\theta(y)-\theta(x) \equiv 0\mod 2\pi$$
This implies $t(h(x,y)+h(y,x)) \in 2\pi \integers + 2\lambda$. Note that LHS belongs a lattice with span $t$ and RHS is a lattice with span $2\pi$. Because $t$ is not a multiple of $2\pi$ this equality cannot happen. Therefore, when $t\not\in2\pi \integers$, sp$(\cL_t)\subset \{|z|<1\}$ and we have the claim. 
\subsubsection{Chains without densities}\label{nodensity}
We consider a more general case where transition probabilities may not have a density. We claim we can recover (A1)--(A4) if the transition operator takes the form $$\cL_0 = a \cJ_0 + (1-a)\cK_0$$ where $a \in (0,1)$ and $ \cJ_0$ and $\cK_0$ are Markov operators on $L^\infty(\cM)$ (i.e.\hspace{3pt}$\cJ_0f\geq 0$ if $f\geq 0$ and $\cJ_01=1$ and similarly for $\cK_0$), $$\cJ_0f(x) = \int p(x,y)f(y)\, d\mu(y)$$
and $$\cK_0f(x)=\int f(y)Q(x,dy)$$ 
where $p$ is a smooth transition density and $Q$ is a transition probability measure. Let $h(x,y)$ be piece-wise smooth and put,
$$\cJ_t(f)=\cJ_0(e^{ith}f)\ \ \text{and}\ \ \cK_t(f)=\cK_0(e^{ith}f).$$
Defining $\cL_t=a\cJ_t+(1-a)\cK_t$ we can conclude $t \mapsto \cL_t$ is analytic and that $$\EXP_{\mu}(e^{itS_n})=\int \cL^n_t \scalebox{1.10}{$1$} \, d\mu.$$


Now we show that conditions (A2), (A3) and (A4) are satisfied. Because $\|\cJ_t\| \leq 1 $ and $\|\cK_t\|\leq 1$ we have $\|\cL_t\| \leq 1$. Thus the spectral radius of $\cL_t$ is $\leq 1$. Because $a\cJ_t$ is compact, $\cL_t$ and $(1-a)\cK_t$ have the same essential spectrum. See \cite[Theorem IV.5.35]{Kato}. However the spectral radius of the latter is at most $(1-a)$. Hence, the essential spectral radius of $\cL_t$ is at most $(1-a)$. 

Because both $\cJ_0$ and $\cK_0$ are Markov operators we can conclude that $1$ is an eigenvalue of $\cL_0$ with constant function $1$ as the corresponding eigenfunction. From the previous paragraph the essential spectral radius of $\cL_0$ is at most $(1-a)$. Because $\cL^n$ is norm bounded it cannot have Jordan blocks. So $1$ is semisimple. 

Suppose, $\cL_tu=e^{i\theta}u$. Without loss of generality we may assume $\|u\|_\infty=1$. Assuming there exists a positive measure set $\Omega$ with $|u(x)|<1-\delta$ we can conclude that, for all $x$, 
\begin{align*}
|u(x)|=|L_tu(x)|&=\left|a\cJ_tu(x) + (1-a)\cK_tu(x)\right| 
\\& \leq a \int_{\Omega}|u(y)| p(x,y) d\mu(y) + a \int_{\Omega^c}|u(y)| p(x,y) d\mu(y) + (1-a) \\ &\leq 1-a\delta\mu(\Omega).
\end{align*}
This is a contradiction. Therefore, $|u(x)|=1$. Put $u(x)=e^{i\gamma(x)}$. Then,  
\begin{align*}
1=a\int e^{i(th(x,y)+\gamma(y)-\gamma(x)-\theta)} p(x,y) d\mu(y) + (1-a)e^{-i(\theta+\gamma(x))}\cK_t u 
\end{align*}
Hence, $\int e^{i(th(x,y)+\gamma(y)-\gamma(x)-\theta)} p(x,y) d\mu(y)=1 \implies \cJ_tu=e^{i\theta}u$. From \cref{density}, this can only be true when $t=0$ and in this case $\theta=0$ and $u\equiv 1$. This concludes that $\cL_t$, $t\neq 0$ has no eigenvalues on the unit disk and the only eigenvalue of $\cL_0$ on the unit disk is $1$ and its geometric multiplicity is $1$. As $1$ is semisimple, it is simple as required. This concludes proof of (A2) and (A3).

From the previous case, there exists $r>0$ and $\epsilon \in (0,1)$ such that such that for all $|t|>r$ we have $\|\cJ^2_t\| \leq 1-\epsilon$. From this we have, $\|\cL^2_t\| =\|a^2\cJ^2_t + a(1-a) \cJ_t\cK_t+(1-a)a\cK_t\cJ_t +(1-a)^2\cK^2_t\| \leq 1 - a^2\epsilon$.
Hence, for all $|t|>r$, for all $N$, $\|\cL^N_t\| \leq (1-a^2\epsilon)^{\lfloor N/2 \rfloor}$ which gives us (A4) with no restrictions on $r_1$. Therefore, $S_N$ admits Edgeworth expansions of all orders as before. 

As in the previous section, an analysis can be carried out when \eqref{MC-CoB} fails. The conclusions are exactly the same. 
\subsection{One dimensional piecewise expanding maps}\label{EM1d}
Here we check assumptions \eqref{MainAssum}, (A1)--(A4) for piecewise expanding maps of the interval using the results of \cite{BE, L1}. 

Let $f:[0,1]\to [0,1]$ be such that there is a finite partition $\cA_0$ of $[0,1]$ (except possibly a measure 0 set) into open intervals such that for all $I \in \cA_0$, $f |_I$ extends to a $C^2$ map on an interval containing $\overline{I}$. In other words $f$ is a piece-wise $C^2$ map. Further, assume that $f'\geq \lambda>1$ i.e.\hspace{2pt}$f$ is uniformly expanding. Next, let $\cA_n = \bigvee_{k=0}^n T^{-j} \cA_0$ and suppose for each $n$ there is $N_n$ such that for all $I \in \cA_n$, $f^{N_n}I=[0,1]$. Such maps are called \textit{covering}. 

Statistical properties of piece-wise $C^2$ covering expanding maps of an interval, are well-understood. For example, see \cite{L1}. In particular, such a function $f$ has a unique absolutely continuous invariant measure with a strictly positive density
$h\in\BV[0,1]$ and the associated transfer operator 
$$\cL_0 \vf(x)=\sum_{y\in f^{-1}(x)} \frac{\vf(y)}{f'(y)}$$
has a spectral gap. 

Let $g$ be $C^2$ except possibly at finite number of points and admitting a $C^2$ extension on each interval of smoothness. Define $X_n=g \circ f^n$ and consider it as a random variable with $x$ distributed according to some measure $\rho(x) dx$, $\rho\in\BV[0,1]$. 

Define a family of operators $\cL_t:$ BV$[0,1]\to $ BV$[0,1]$ by
$$\cL_t \vf(x)=\sum_{y\in f^{-1}(x)} \frac{ e^{itg(y)}}{f'(y)}\vf(y)$$ 
where $t=0$ corresponds to the transfer operator. Because $g$ is bounded, writing $e^{itg(y)}$ as a power series we can conclude $t \to \cL_t$ is analytic for all $t$. This gives (A1). 

(A2) follows from the fact that $\cL_0$ has a spectral gap. We further assume that 
\begin{equation}\label{NotPWConst}
g \text{ is not cohomologous to a piece-wise constant function.}
 \end{equation}
In particular, $g$ is not a BV coboundary. 

The assumption \eqref{NotPWConst} is reasonable. Indeed, suppose that $g$ is piece-wise constant 
taking values $c_1, c_2\dots c_k.$ Then $S_n$ takes less than $n^{k-1}$ distinct values so the maximal jump is of order at least $n^{-(k-1)}$ so $S_n$ can not admit Edgeworth expansion of order $(2k-2)$ in contrast to the case where \eqref{NotPWConst} holds as we shall see below.

A direct computation gives,
$$\EXP(e^{it S_n/\sqrt{n}})=\int_0^1 \cL_{t/\sqrt{n}}^n\rho(x)\, dx.$$
Therefore, there exists $A$ such that, 
\begin{equation}\label{eq:limit}
\lim_{n\to\infty} \EXP(e^{it \frac{S_n-nA}{\sqrt{n}}})=e^{-t^2\sigma^2/2}
\end{equation}
where $\sigma^2 \geq 0$. It is well know that $\sigma^2>0 \iff g$ is  a $\BV$ coboundary (see \cite{G}). From \eqref{eq:limit} it is clear that $S_n$ satisfies the CLT.   

To show (A3) holds, we first normalize the family of operators,
$$\overline{\cL}_t v(x) = \sum_{f(y)=x}\frac{e^{itg(y)}h(y)}{f'(y)h\circ f (y)} v(y)$$
Then, $\overline{\cL}_t=H^{-1} \circ \cL_t \circ H $ where $H$ is multiplication by the function $h$. Therefore, $\cL_t$ and $\overline{\cL}_t$ have the same spectrum. However, the eigenfunction corresponding to the eigenvalue $1$ of $\overline{\cL}_0$ changes to the constant function $1$. 

Assume $e^{i\theta}$ is an eigenvalue of $\overline{\cL}_t$. Then, there exists $u \in$ BV$[0,1]$ with $\overline{\cL}_tu(x)=e^{i\theta}u(x)$. Observe that,
\begin{align*}
\overline{\cL}_0|u|(x)=\sum_{f(y)=x}\frac{|u(y)|h(y)}{f'(y)h\circ f (y)}\geq \bigg|\sum_{f(y)=x}\frac{e^{itg(y)}u(y)h(y)}{f'(y)h\circ f (y)} \bigg|=|\overline{\cL}_tu(x)| =|e^{i\theta}u(x)| = |u(x)|
\end{align*}
Also note that, $\overline{\cL}_0$ is a positive operator. Hence, $\overline{\cL}^n_0|u|(x) \geq |u(x)|$ for all $n$. However, $$\lim_{n \to \infty} (\overline{\cL}^n_0|u|)(x) = \int |u(y)| \cdot 1 \, dy$$ 
because $1$ is the eigenfunction corresponding to the top eigenvalue. So for all $x$, $$\int |u(y)|\, dy \geq |u(x)|$$ This implies that $|u(x)|$ is constant. WLOG $|u(x)|\equiv 1$. So we can write $u(x)=e^{i\gamma(x)}$. Then, 
$$ \overline{\cL}_t u(x) = \sum_{f(y)=x}\frac{h(y)}{f'(y)h\circ f (y)} e^{i(tg(y)+\gamma(y))} = e^{i (\theta+\gamma(x))} $$
$$ \implies \sum_{f(y)=x}\frac{h(y)}{f'(y)h\circ f (y)} e^{i(tg(y)+\gamma(y)-\gamma(f(y))-\theta)} = 1 $$
for all $x$. Since, $$ \overline{\cL}_0 1 = \sum_{f(y)=x}\frac{h(y)}{f'(y)h\circ f (y)}  = 1$$ 
and $e^{i(tg(y)+\gamma(y)-\gamma(x)-\theta)}$ are unit vectors, it follows that 
\begin{equation}\label{LatticeVal}
tg(y)+\gamma(y)-\gamma(f(y))-\theta = 0\mod 2\pi
\end{equation}
for all $y$. Because $g$ is not cohomologous to a piecewise constant function
 we have a contradiction. Therefore, $\overline{\cL}_t$ and hence $\cL_t$ does not have an eigenvalue on the unit circle when $t\neq 0$. 

To complete the proof of (A3) one has to show that the spectral radius of $\cL_t$ is at most $1$ and that the essential spectral radius of $\cL_t$ is strictly less than $1$. This is clear from Lasota-Yorke type inequality in \cite[Lemma 1]{BE}. In fact, there is a uniform $\kappa \in (0,1)$ such that  $r_{ess}(\cL_t) \leq \kappa$ for all $t$. 

Next, we describe in detail how the estimate in \cite[Proposition 1]{BE} gives us (A4). To make the notation easier we assume $t>0$ and we replace $|t|$ by $t$. \cite[Proposition 1]{BE} implies that there exist $c$ and $C$ such that if $K_1$ large enough (we fix one such $K_1$) then for all $t>K_1$,
\begin{equation}\label{DC-BV}
\| \cL^{\lceil c \ln t \rceil}_t u \|_t \leq e^{-C\lceil c \ln t \rceil}\|u\|_t
\end{equation}
where $\|h\|_t = (1+t)^{-1}\|h\|_{\text{BV}}+\| h\|_{\text{L}^1}$. Therefore, 
$$\| \cL^{k\lceil c \ln t \rceil}_t u \|_t\leq e^{-C\lceil c \ln t \rceil}\|\cL^{(k-1)\lceil c \ln t \rceil}u\|_t \leq  \dots \leq e^{-Ck\lceil c \ln t \rceil} \|u \|_t$$ 
Also, $\|\cL_t\|_t \leq 1$. So, if $n=k\lceil c \ln t \rceil+r$ where $0\leq r< \lceil c \ln t \rceil $ then
$$\| \cL^{n}_t u \|_t \leq e^{-C k \lceil c \ln t \rceil} \|\cL^{r}_t u \|_t  \leq e^{-Cn\frac{k \lceil c \ln t \rceil}{k \lceil c \ln t \rceil + r }} \|u\|_t \leq e^{-Cn\frac{k }{k +1 }} \|u\|_t $$ 
However, 
$$(1+t)^{-1}\|h\|_{\text{BV}}\leq \|h\|_t \leq [1+(1+t)^{-1}]\|h\|_{\text{BV}}$$
Therefore, 
$$(1+t)^{-1}\| \cL^{n}_t u \|_{\text{BV}} \leq [1+(1+t)^{-1}]e^{-Cn\frac{k }{k +1 }} \|u\|_{\text{BV}}$$
which gives us
$$\|\cL^{n}_t \|_{\text{BV}} \leq (t+2)e^{-Cn\frac{k }{k +1}}$$
and here $k=k(n,t)= \lfloor \frac{n}{\lceil c \ln t \rceil}\rfloor$.
When $K_1 \leq |t| \leq n^{r_1}$, $k_{\min} = \lfloor \frac{n}{\lceil c \ln n^{r_1} \rceil}\rfloor$ and $\frac{k_{\min}}{k_{\min}+1} \to 1$ as $n \to \infty$. Also, $1 \geq \frac{k}{k+1} \geq \frac{k_{\min}}{k_{\min}+1}$ and,
$$\|\cL^{n}_t \|_{\text{BV}} \leq (t+2)e^{-Cn\frac{k(n,t) }{k(n,t) +1}} \leq 2n^{r_1}e^{-Cn\frac{k_{\min}}{k_{\min}+1} }$$
Choosing $n_0$ such that for all $n>n_0$, $\frac{k_{\min}}{k_{\min}+1}>\frac{1}{2}$ (so this choice of $n_0$ works for all $t$) we can conclude that,
$$\|\cL^{n}_t \|_{\text{BV}} \leq 2n^{r_1}e^{-Cn/2}$$
This proves (A4) for all choices of $r_1$. In particular given $r$, we can choose $r_1>\frac{r-1}{2}$ in the above proof. This implies that Edgeworth expansions of all orders exist. 


\subsection{Multidimensional expanding maps}
Let $\cM$ be a compact Riemannian manifold and $f: \cM\to \cM$ be a $C^2$ expanding map.
Let $g:\cM\to\mathbb{R}$ be a $C^2$ function which is non homologous to constant.
The proof of Lemma 3.13 in \cite{D02} shows that this condition is equivalent to $g$ not being 
{\em infinitesimally integrable} in the following sense. The natural extension of $f$ acts on the
space of pairs $(\{y_n\}_{n\in\mathbb{N}}, x)$ where 
$ f(y_{n+1})=y_n$ for $n>0$ and $fy_1=x.$ Given such pair let
\begin{align*}
\Gamma(\{y_n\}, x)=\lim_{n\to\infty} \frac{\partial}{\partial x} \left[\sum_{k=0}^{n-1} g(f^k y_n)\right]
=\lim_{n\to\infty} \frac{\partial}{\partial x} \left[\sum_{k=1}^{n} g(y_k)\right]=\left[\sum_{k=1}^{\infty} \frac{\partial}{\partial x} g(y_k)\right]. 
\end{align*}
$g$ is called infinitesimally integrable if $\Gamma(\{y_n\}, x)$ actually depends only on $x$ but not on $\{y_n\}.$ 

Let $X_n=g\circ f^n.$ We want to verify (A1)--(A4) when $x$ is distributed according to a smooth
density $\rho.$ Note that assumption \eqref{MainAssum} holds with
$v=\rho,$ $\ell$ being the Lebesgue measure and 
$$ (\cL_t\phi)(x)=\sum_{y\in f^{-1} (x)} \frac{e^{i t g(y)}}{\left|\det\left(\frac{\partial f}{\partial x}\right)\right|}
\phi(y). $$
We will check (A1)--(A4) for $\cL_t$ acting on $C^1(\cM).$ The proof of (A1)--(A3) is the same as in \cref{EM1d}. In particular, for (A3) we need Lasota--Yorke inequality (see \eqref{LYEM} below) which is proven in \cite[equation (19)]{D02}.

The proof of (A4) is also similar to \cref{EM1d}, so we just explain the differences. As before we assume that $t>0.$ Given a small constant $\kappa$ let
$$ \|\phi\|_t=\max\left(\|\phi\|_{C^0}, \frac{\kappa \|D\phi\|_{C^0}}{1+t}\right). $$
Then by \cite[Proposition 3.16]{D02}
\begin{equation}\label{A}
\|\cL_t^n \phi\|_t\leq \|\phi\|_t 
\end{equation}
provided that $n\geq C_1\ln t.$

By \cite[Lemma 3.18]{D02} if $g$ is not infinitesimally integrable then there exists a constant
$\eta<1$ such that 
\begin{equation}
\|\cL_t^n \phi\|_{L^1} \leq \eta^n \|\phi\|_t .
\end{equation}
The Lasota--Yorke inequality says that there is a constant $\theta<1,$ such that 
\begin{equation}
\label{LYEM} 
\left\Vert D\left(\cL_t^n \phi\right)\right\Vert_{C^0} \leq C_3 \left(t \|\phi\|_{C^0}+\theta^n \|D \phi\|_{C^0} \right)
\end{equation}
Also, 
\begin{equation}\label{D}
\left\Vert \cL_t^n \phi \right\Vert_{C^0} \leq
\|\cL_0^n  (|\phi|)\|_{C^0} \leq
C_4 \left(\|\; |\phi|\; \|_{L^1}+\theta^n \|\; |\phi|\; \|_{\text{Lip}}\right)
\end{equation}
where the last step relies on $\cL_0$ having a spectral gap on the space of Lipshitz functions.
Combing \eqref{A} through \eqref{D}, we conclude that $\cL_t$ satisfies \eqref{DC-BV}. The rest of the argument is the same as in \cref{EM1d}.

\appendix 

\section{} \label{appen}

In the discussion below, we do not assume the abstract setting introduced in \cref{results}. Therefore the hierarchy of asymptotic expansions provided here holds true in general.  

We observe that the classical Edgeworth expansion is the strongest form of asymptotic expansion among the expansions for non-lattice random variables. The following proposition and \cref{StrongWeak} establish this fact. 

\begin{prop}\label{Hierarchy1}
Suppose $S_N$ admits order $r$ Edgeworth expansions, then it also admits order $r$ weak global expansion for $f\in F^1_{0}$ and order $r$ averaged expansions for $f \in L^1$. Further, if the polynomials $P_{p}$ in the Edgeworth expansion has opposite parity as $p$ then $S_N$ admits  order $r-1$ weak local expansion for $f \in F^1_{r}$.
\end{prop}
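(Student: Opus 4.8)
The plan is to derive all three expansions from the order-$r$ Edgeworth expansion \eqref{EdgeExpP} by integrating it against test functions; nothing beyond the uniform remainder $o(N^{-r/2})$ it supplies is used. Write $F_N(z)=\Prob\big((S_N-NA)/\sqrt N\le z\big)$ and $\cE_{r,N}(z)=\fN(z)+\sum_{p=1}^r N^{-p/2}P_p(z)\fn(z)$, and recall from \eqref{EdgePolyRel} that $\frac{d}{dx}\big[\fn(x)P_p(x)\big]=\fn(x)R_p(x)$, so that, setting $R_0\equiv 1$, one has $\cE_{r,N}'(z)=\sum_{p=0}^r N^{-p/2}R_p(z)\fn(z)$. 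Following the identifications in \cref{Coeff} I would take $P_{p,a}=P_p$, $P_{p,g}=R_p$, and $P_{p,l}$ as in \eqref{WeakLocalPoly}. The \textbf{averaged expansion} is then immediate: substituting $z\mapsto z+y/\sqrt N$ in \eqref{EdgeExpP}, multiplying by $f(y)$ and integrating in $y$, uniformity of the error turns $o(N^{-r/2})$ into $\|f\|_{L^1}\,o(N^{-r/2})$, which is the order-$r$ averaged expansion for every $f\in L^1$.

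For the \textbf{weak global expansion}, write $\EXP\big(f(S_N-NA)\big)=\int f(\sqrt N z)\,dF_N(z)$. Since $f\in F^1_0$ is $C^1$ with $f,f'\in L^1$, it vanishes at $\pm\infty$, so integrating by parts against $d\big[F_N-\cE_{r,N}\big]$ produces no boundary term and, using $\cE_{r,N}'=\sum N^{-p/2}R_p\fn$, gives
\[
\EXP\big(f(S_N-NA)\big)=\sum_{p=0}^{r}\frac{1}{N^{p/2}}\int f(\sqrt N z)\,R_p(z)\fn(z)\,dz-\sqrt N\int f'(\sqrt N z)\big(F_N-\cE_{r,N}\big)(z)\,dz .
\]
After $u=\sqrt N z$ the last integral is at most $\|F_N-\cE_{r,N}\|_\infty\,\|f'\|_{L^1}=\|f'\|_{L^1}\,o(N^{-r/2})$; since moreover the $p=r$ summand is itself $O\big(\|f\|_{L^1}N^{-(r+1)/2}\big)$, this yields the weak global expansion of the claimed order with $P_{p,g}=R_p$.

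For the \textbf{weak local expansion} I would start from the same identity, multiply by $\sqrt N$, substitute $u=\sqrt N z$ (so that $\sqrt N\,N^{-p/2}\int f(\sqrt N z)R_p\fn\,dz=N^{-p/2}\int f(u)(R_p\fn)(u/\sqrt N)\,du$), and Taylor-expand $(R_p\fn)(u/\sqrt N)$ about $0$ to order $r-1$. Here the hypothesis $f\in F^1_r$ is used precisely: the Taylor remainder is bounded by $C\|x^r f\|_{L^1}N^{-r/2}$ because $f$ has $r$ finite moments, while $f\in C^1$ handled the previous step. This produces
\[
\sqrt N\,\EXP\big(f(S_N-NA)\big)=\sum_{p=0}^{r}\sum_{m=0}^{r-1}\frac{(R_p\fn)^{(m)}(0)}{m!}\,\frac{1}{N^{(p+m)/2}}\int u^m f(u)\,du+\|f\|\,o\big(N^{-(r-1)/2}\big).
\]
Now the parity hypothesis enters: $P_p$ having parity opposite to $p$ forces $\fn R_p=(\fn P_p)'$ to have the parity of $p$, hence $(R_p\fn)^{(m)}(0)=0$ unless $m\equiv p\pmod 2$; so only integer powers $N^{-q}$ survive, and regrouping gives the weak local expansion of order $r-1$.

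The real work is in this last step. First, one must identify the surviving coefficient of $N^{-q}$, namely $\sum_{p+m=2q}\tfrac{(R_p\fn)^{(m)}(0)}{m!}\int u^m f(u)\,du$, with $\tfrac{1}{2\pi}\int P_{q,l}(z)f(z)\,dz$ for $P_{q,l}$ as in \eqref{WeakLocalPoly}: Fourier inversion from $\widehat{R_p\fn}(t)=A_p(t)e^{-\sigma^2 t^2/2}$ (equivalently \eqref{PolyForDensity}) gives $(R_p\fn)^{(m)}(0)=\tfrac{i^m}{2\pi}\int t^m A_p(t)e^{-\sigma^2 t^2/2}\,dt=\tfrac{i^m}{2\pi}a_{p,m}$, and one combines this with $a_{k,j}=0$ for $k\not\equiv j\pmod 2$ coming from \eqref{Parity}. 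Second, one must keep the bookkeeping honest — the half-integer powers, the Taylor remainders, and the $p=r$ summand must all fall into $o(N^{-(r-1)/2})$ — and should note that the parity hypothesis is genuinely needed, since a surviving odd power $N^{-(2q+1)/2}$ cannot be written in the weak local form. The averaged and weak global parts, by contrast, are essentially mechanical once $\frac{d}{dx}[\fn P_p]=\fn R_p$ is available.
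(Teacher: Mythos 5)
Your argument is correct and, for the averaged and weak global parts, is line-for-line the paper's own proof: substitute $z\mapsto z+y/\sqrt N$ and integrate against $f$ for the averaged expansion, and integrate by parts against $d(F_N-\cE_{r,N})$ using $\frac{d}{dx}[\fn P_p]=\fn R_p$ for the global one. (You inherit the paper's own looseness here: the boundary-free integration by parts only yields an error $\|f'\|_{L^1}\,o(N^{-r/2})$, one power of $\sqrt N$ short of the $o(N^{-(r+1)/2})$ demanded by \Cref{WGEdgeExpDef}; dropping the $p=r$ summand does not repair this, but the paper's proof asserts the same conclusion from the same bound, so this is not a gap relative to the paper.) The only genuine divergence is in the weak local part. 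The paper does not prove it inside \Cref{Hierarchy1} at all; it defers to the next proposition, which passes to the Fourier side via Plancherel, writing $\int\sqrt N f(x\sqrt N)R_p(x)\fn(x)\,dx=\frac{1}{2\pi}\int\widehat f(t/\sqrt N)A_p(t)e^{-\sigma^2t^2/2}\,dt$ and Taylor-expanding $\widehat f$ at $0$ to order $r-1$, with the remainder controlled by $\widehat f^{(r)}$ and hence by $C_r(f)=\|x^rf\|_{L^1}$. You instead Taylor-expand the dual factor $(R_p\fn)(u/\sqrt N)$ in physical space, controlling the remainder by $\|(R_p\fn)^{(r)}\|_\infty\|x^rf\|_{L^1}$, and only invoke Fourier inversion at the end to identify $(R_p\fn)^{(m)}(0)$ with $a_{p,m}$ and hence with $P_{q,l}$ from \eqref{WeakLocalPoly}. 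The two computations are exactly dual, use the same two hypotheses ($r$ finite moments of $f$ and the parity relation that kills the half-integer powers of $N$), and your version is marginally more elementary in that Plancherel is not needed for the expansion itself, only for the cosmetic identification of the coefficients (where one should watch the $(\pm i)^m$ convention, a sign ambiguity the paper shares).
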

\begin{rem}\label{StrongWeak}
\Cref{FiniteMC} contains examples for which the weak and averaged forms of expansions exist but the strong expansion does not. Therefore none of the above implications are reversible. 
\end{rem}
\begin{proof}[Proof of Proposition \ref{Hierarchy1}]\
Suppose $f \in F^1_{0}$. Let $F_n=\Prob\big(\frac{S_n-nA}{\sqrt{n}}\leq x \big)$ and put
$$\cE_{r,n}(x) = \fN(x)+\sum_{p=1}^r \frac{1}{n^{p/2}} P_p(x)\fn(x).$$
Observe that $F_n(x)-\cE_n(x)=o(n^{-r/2})$ uniformly in $x$ and,
\begin{align*}
d\cE_{r,n}(x) &= \fn(x)\, dx + \sum_{p=1}^r \frac{1}{n^{p/2}} \left[P'_p\left(x\right)\fn\left(x\right) + P_p(x)\fn'(x)\right]\, dx =\sum_{p=0}^r \frac{1}{n^{p/2}} R_p(x)\fn(x)\, dx
\end{align*}
where $R_p$ are polynomials given by $R_p=P'_p+P_pQ$ and $Q$ is such that $\fn'(x)=Q(x) \fn(x)$. 
Next, we observe that, 
\begin{align*}
\EXP(f(S_n-nA))&=\EXP\Big(f\Big(\frac{S_n-nA}{\sqrt{n}}\sqrt{n}\Big)\Big)= \int f(x\sqrt{n}) \, dF_n(x)\\ &= \int f(x\sqrt{n}) \, d\cE_{r,n}(x) +\int f(x\sqrt{n})\, d(F_n -\cE_{r,n})(x).
\end{align*}

Now we integrate by parts and use $\cE_{r,n}(\infty)=F_n(\infty)=1$ and $\cE_{r,n}(-\infty)=F_n(-\infty)=0$ to obtain,

\begin{align*}
\EXP(f(S_n-nA))&=  \int f(x\sqrt{n}) \, d\cE_{r,n}(x) + (F_n-\cE_{r,n})(x)f(x\sqrt{n}) \Big|_{-\infty}^{\infty} \\ &\ \hspace{180pt}  - \int (F_n  -\cE_{r,n})(x)\sqrt{n} f'(x\sqrt{n})\, dx \\ &= \int \sum_{p=0}^r \frac{1}{n^{p/2}}R_p(x)\fn(x)\, f(x\sqrt{n})dx+ o\left(n^{-r/2}\right)\int\sqrt{n} f'(x\sqrt{n})\, dx  \\ &=\sum_{p=0}^r \frac{1}{n^{p/2}} \int R_p(x)\fn(x)\, f(x\sqrt{n})dx+ o\left(n^{-r/2}\right).
\end{align*}

This is the order $r$ weak global Edgeworth expansion. The existence of the order $r-1$ weak local expansion follows from this. This is our next theorem. So we postpone its proof.  

For $f \in L^1$ substituting $x$ by $x+\frac{y}{\sqrt{n}}$ in the Edgeworth expansion for $S_n$ we have 
\begin{multline*}
\Prob\left(\frac{S_n-nA}{\sqrt{n}}\leq x+\frac{y}{\sqrt{n}}\right)-\fN\left(x+\frac{y}{\sqrt{n}}\right) \\ =\sum_{p=1}^r \frac{1}{n^{p/2}} P_p\left(x+\frac{y}{\sqrt{n}}\right)\fn\left(x+\frac{y}{\sqrt{n}}\right)+ o\left(n^{-r/2}\right).
\end{multline*}
For fixed $x$, the error is uniform in $y$. Therefore, multiplying the equation by $f(y)$ and then integrating we can conclude that the order $r$ averaged expansion exists.
\end{proof}

\begin{rem}
We have seen from the derivation of the Edgeworth expansion in \cref{proofs} that $P_p(x)$ and $p$ have opposite parity in the weakly dependent case. This implies that $P_{p,g}$ has the same parity as $p$. This is true in the i.i.d.\hspace{3pt}case as well. Even though this assumption may look artificial in the general case, it is reasonable. When using characteristic functions to derive the expansions, one is likely to end up with Hermite polynomials which is the reason behind the parity relation.
\end{rem}

Next, we compare the the relationships among the weak and averaged forms of Edgeworth expansions. 

\begin{prop}
Suppose $S_N$ admits order $r$ weak global Edgeworth expansion for $f \in F^{q+1}_r$ for some $q \geq 0$. 
If the polynomials $P_{p,g}$ in the global Edgeworth expansion has the same parity as $p$ then $S_N$ admits  order $r-1$ weak local expansion for $f$.
\end{prop}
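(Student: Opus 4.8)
The plan is to deduce the weak local expansion directly from the assumed weak global one by a change of variables followed by a Taylor expansion in $N^{-1/2}$. Write $g_p(w):=P_{p,g}(w)\fn(w)$, a Schwartz-class function (a polynomial times a Gaussian). Multiplying the weak global expansion of \Cref{WGEdgeExpDef} by $\sqrt N$ and substituting $u=z\sqrt N$ in each integral gives
\[
\sqrt N\,\EXP\big(f(S_N-NA)\big)=\sum_{p=0}^{r}N^{-p/2}\int_{\reals} g_p\!\Big(\tfrac{u}{\sqrt N}\Big)f(u)\,du+\|f\|\,o\big(N^{-r/2}\big),
\]
using $\sqrt N\cdot o(N^{-(r+1)/2})=o(N^{-r/2})$. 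The hypothesis that $P_{p,g}$ has the same parity as $p$, together with $\fn$ being even, makes $g_p$ a function of parity $p$; hence $g_p^{(l)}(0)=0$ unless $l\equiv p\pmod 2$, and a surviving term combined with the prefactor $N^{-p/2}$ carries an \emph{integer} power $N^{-(p+l)/2}$. This is the mechanism that produces an expansion in integer powers of $N^{-1}$, as \Cref{WLEdgeExpDef} demands.

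Next I would Taylor-expand each $g_p$ with Lagrange remainder. For $0\le p\le r-1$ take the degree-$(L_p-1)$ Taylor polynomial with $L_p:=r-p\ (\ge 1)$,
\[
g_p(w)=\sum_{\substack{0\le l\le L_p-1\\ l\equiv p\,(2)}}\frac{g_p^{(l)}(0)}{l!}\,w^l+\frac{g_p^{(L_p)}(\xi_w)}{L_p!}\,w^{L_p},\qquad |\xi_w|\le|w|,
\]
and for $p=r$ simply bound the summand by $\|g_r\|_\infty\|f\|_{L^1}N^{-r/2}$. Substituting $w=u/\sqrt N$, multiplying by $N^{-p/2}$ and integrating against $f$, the polynomial part contributes $\tfrac{g_p^{(l)}(0)}{l!}N^{-(p+l)/2}\int_{\reals}u^{l}f(u)\,du$; these integrals are finite since $l\le r-p-1<r$ and $f\in F^{q+1}_{r}$, and they carry integer powers $N^{-(p+l)/2}$ with $(p+l)/2\le\lfloor(r-1)/2\rfloor$. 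Collecting the polynomial parts by the integer exponent $m=(p+l)/2$ (so $p$ runs over $0,\dots,2m$ and $l=2m-p$) and using $\int u^lf=\int t^lf$, I obtain the candidate polynomials
\[
P_{m,l}(t)=2\pi\sum_{p=0}^{2m}\frac{g_p^{(2m-p)}(0)}{(2m-p)!}\,t^{\,2m-p},\qquad m=0,\dots,\Big\lfloor\tfrac{r-1}{2}\Big\rfloor,
\]
of degree $2m$, with $P_{0,l}=2\pi\,g_0(0)=\sqrt{2\pi/\sigma^2}$ in agreement with \eqref{WeakLocalPoly}.

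For the error, the key observation is that $g_p^{(L_p)}$ is again a polynomial times the Gaussian $\fn$, hence a bounded function on $\reals$; therefore the Taylor remainder contributes
\[
\Big|\,N^{-p/2}\!\int_{\reals}\frac{g_p^{(L_p)}(\xi_{u/\sqrt N})}{L_p!}\Big(\tfrac{u}{\sqrt N}\Big)^{L_p}f(u)\,du\,\Big|\le \frac{\|g_p^{(L_p)}\|_\infty}{(r-p)!}\,\big\|u^{\,r-p}f\big\|_{L^1}\,N^{-r/2},
\]
which is finite because $r-p\le r$, is $\le C\|f\|\,N^{-r/2}$ since $\|u^{r-p}f\|_{L^1}\le C_r(f)\le\|f\|$, and is $o(N^{-(r-1)/2})$. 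Since both $o(N^{-r/2})$ and $\cO(N^{-r/2})$ lie in $o(N^{-(r-1)/2})$, absorbing all remainders, the $p=r$ term, and the $\|f\|o(N^{-r/2})$ error of the global expansion into a single $\|f\|o(N^{-(r-1)/2})$ term yields exactly the order-$(r-1)$ weak local Edgeworth expansion for $f$. The one genuine difficulty is the bookkeeping that keeps the moment of $f$ required at every step at most $r$: this is why one expands the whole factor $g_p=P_{p,g}\fn$ (all of whose derivatives are bounded) rather than $P_{p,g}$ and the Gaussian separately, and why the truncation order is taken to be precisely $L_p=r-p$ — expanding $e^{-u^2/2\sigma^2N}$ as its own power series in $u^2/N$ would demand control of arbitrarily high moments of $f$. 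Uniqueness of the weak local expansion (the remark after \Cref{WLEdgeExpDef}) finally identifies these $P_{m,l}$ with those of \eqref{WeakLocalPoly}.
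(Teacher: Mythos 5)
Your argument is correct, and it reaches the conclusion by a genuinely different route than the paper. The paper first moves each term of the global expansion to the Fourier side via Plancherel, writing $\int \sqrt{n}f(x\sqrt n)P_{p,g}(x)\fn(x)\,dx=\frac{1}{2\pi}\int\widehat f(t/\sqrt n)A_p(t)e^{-\sigma^2t^2/2}\,dt$ with $A_p$ defined by \eqref{PolyForDensity}, and then Taylor-expands $\widehat f$ at $0$ to order $r-1$; the parity of $A_p$ (inherited from $P_{p,g}$) kills the odd terms and the moments of $f$ enter through $\widehat f^{(j)}(0)=\int(-it)^jf(t)\,dt$. You instead stay entirely on the physical side: after the rescaling $u=z\sqrt N$ you Taylor-expand the explicit, smooth, rapidly decaying function $g_p=P_{p,g}\fn$ at $0$ rather than the unknown $\widehat f$, with the parity of $g_p$ playing the role the parity of $A_p$ plays in the paper and the moments $\int u^lf$ appearing directly. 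Both arguments use exactly the same inputs ($C_r(f)<\infty$ and the parity hypothesis) and produce the same error order $\cO(N^{-r/2})=o(N^{-(r-1)/2})$; your version avoids Plancherel and the auxiliary polynomials $A_p$ altogether, and the remainder estimate is arguably cleaner because every derivative of $g_p$ is a bounded polynomial-times-Gaussian, while the paper's version has the advantage of landing directly on the formula \eqref{WeakLocalPoly} used in \cref{Coeff}. Your closing identification of the two families of polynomials via uniqueness of the weak local expansion is the right way to reconcile them (one can also check directly that $g_p^{(l)}(0)=\frac{1}{2\pi}\int(it)^lA_p(t)e^{-\sigma^2t^2/2}\,dt$ up to the Fourier sign conventions). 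The only cosmetic caveat is that your remark $P_{0,l}=2\pi g_0(0)=\sqrt{2\pi/\sigma^2}$ tacitly assumes $P_{0,g}\equiv 1$, which holds in the paper's construction but is not part of the abstract hypothesis of the proposition; this does not affect the argument.
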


\begin{proof}
Assume, $f\in F^1_{r}$. Then, from the Plancherel formula, 
$$\int_\reals \sqrt{n}f\big(x\sqrt{n}\big)P_{p,g}(x) \fn(x) \, dx = \frac{1}{2\pi}\int_{\reals}\widehat{f}\Big(\frac{t}{\sqrt{n}}\Big) A_p(t) e^{-\frac{\sigma^2t^2}{2}} \, dt $$
where $A_p(t)$ are polynomials constructed using the following relation, 
$$P_{p,g}(t)e^{-\frac{t^2}{2\sigma^2}} =\frac{1}{\sqrt{2\pi\sigma^2}}A_p\left(-i\frac{d}{dt}\right)\Big[e^{-\frac{t^2}{2\sigma^2}} \Big].$$ 
By construction $P_{p,g}$ and $A_p$ has the same parity. This means $A_p$ has the same parity as $p$. 

First replace $$\int P_{p,g}(x)\fn(x)\, f(x\sqrt{n})dx$$ by $$\frac{1}{2\pi \sqrt{n}}\int_{\reals}\widehat{f}\Big(\frac{t}{\sqrt{n}}\Big) A_p(t) e^{-\frac{\sigma^2t^2}{2}} \, dt$$ 
in the weak global expansion to obtain,  

\begin{align*}
\sqrt{n}\EXP(f(S_n-nA)) =\frac{1}{2\pi}\sum_{p=0}^r \frac{1}{n^{p/2}} \int_{\reals}\widehat{f}\Big(\frac{t}{\sqrt{n}}\Big) &A_p(t) e^{-\frac{\sigma^2t^2}{2}} \, dt + o\left(n^{-(r-1)/2}\right).
\end{align*}
Then substituting for $\widehat{f}$ with its order $r-1$ Taylor expansion, 
\begin{multline*}
\sqrt{n}\EXP(f(S_n-nA)) =\frac{1}{2\pi}\sum_{p=0}^r\sum_{j=0}^{r-1} \frac{\widehat{f}^{(j)}(0)}{j!n^{(j+p)/2}} \int_\reals t^j e^{-\sigma^2t^2/2} A_p(t)\, dt +o\left(n^{-(r-1)/2}\right).
\end{multline*}
Put
$$a_{pj}=\int_\reals t^j e^{-\sigma^2t^2/2} A_p(t)\, dt =0\ \ \text{and}\ \ f^{(j)}(0)=\int (-it)^jf(t) \, dt$$
to get,
\begin{align*}
\sqrt{n}\EXP(f(S_n-nA)) =\frac{1}{2\pi}\sum_{p=0}^r\sum_{j=0}^{r-1} \frac{a_{pj}}{j!n^{(j+p)/2}} \int_\reals (-it)^j f(t)\, dt + o\left(n^{-(r-1)/2}\right)
\end{align*}
Since $p$ and $A_p$ are of the same parity, when $j+p$ is odd. $a_{pj}=0$. So we collect terms such that $p+j=2k$ where $k=0,\dots,r-1$ and write,
$$P_{k,w}=\sum_{p+j=2k}\frac{a_{pj}}{j!}(-it)^j$$
Then, rearranging, simplifying and absorbing higher order terms to the error, we obtain, 
\begin{align*}
\sqrt{n}\EXP(f(S_n-nA)) =\frac{1}{2\pi}\sum_{k=0}^{\lfloor (r-1)/2\rfloor}\frac{1}{n^k}\int_\reals P_{k,w}(t)f(t)\, dt+ o\left(n^{-(r-1)/2}\right)
\end{align*}
which is the order $r-1$ weak local Edgeworth expansion. 
\end{proof}

\subsection*{Acknowledgement} The authors would like to thank Dmitry Dolgopyat for useful discussions and suggestions during the project and carefully reading the manuscript. 

\end{document}